\newtheorem{theorem}{Theorem}
\newtheorem{lemma}[theorem]{Lemma}
\newtheorem{proposition}[theorem]{Proposition}
\newtheorem{corollary}[theorem]{Corollary}
\newtheorem{remark}{Remark}
\theoremstyle{definition}
\newtheorem{definition}{Definition}
\newtheorem{assumption}{Assumption}
\newtheorem{example}{Example}
\newenvironment{customasmp}[1]
  {\innercustomasmp}
  {\endinnercustomasmp}
\newenvironment{customthm}[1]
  {\innercustomthm}
  {\endinnercustomthm}
\newenvironment{customprop}[1]
  {\innercustomprop}
  {\endinnercustomprop}
\newcommand{\myparagraph}[1]{\par\noindent\textbf{{#1}.}} 
\definecolor{darkpink}{rgb}{0.91, 0.33, 0.5}
\definecolor{puorange}{rgb}{0.80,0.20,0}
\definecolor{bluegray}{rgb}{0.04,0,0.7}
\definecolor{greengray}{rgb}{0.05,0.50,0.15}
\definecolor{darkbrown}{rgb}{0.40,0.2,0.05}
\definecolor{darkcyan}{rgb}{0,0.4,1}
\definecolor{black}{rgb}{0,0,0}
\definecolor{grey}{rgb}{0.93,0.93,0.93}
\definecolor{royalazure}{rgb}{0.0, 0.22, 0.66}
\crefname{section}{Sec.}{Sections}
\crefname{appendix}{Appx.}{Appxs}
\crefname{theorem}{Thm.}{Thms.}
\crefname{innercustomthm}{Thm.}{Thms.}
\crefname{lemma}{Lem.}{Lems.}
\crefname{corollary}{Cor.}{Cors.}
\crefname{proposition}{Prop.}{Props.}
\crefname{innercustomprop}{Prop.}{Props.}
\crefname{assumption}{Asm.}{Asms.}
\crefname{innercustomasmp}{Asmp.}{Asmps.}
\Crefname{example}{Ex.}{Exs.}
\crefname{algorithm}{Alg.}{Algs.}
\Crefname{algorithm}{Algorithm}{Algorithms}
\crefname{figure}{Fig.}{Figs.}
\crefname{table}{Tab.}{Tabs.}
\newcommand{\score}{\ell}
\newcommand{\grad}{S}
\newcommand{\risk}{L}
\newcommand{\rao}{T_{\text{Rao}}}
\newcommand{\lr}{T_{\text{LR}}}
\newcommand{\wald}{T_{\text{Wald}}}
\newcommand{\bbar}[1]{\bar{\bar{#1}}}  
\newcommand{\reals}{{\mathbb R}}
\newcommand{\dom}{\operatorname{dom}}
\newcommand{\abs}[1]{\left| #1 \right|}
\newcommand{\norm}[1]{\left\lVert #1 \right\rVert}
\newcommand{\anorm}[1]{\left| #1 \right|}
\newcommand{\ones}{\operatorname{\mathbf 1}}
\newcommand{\Rank}{\operatorname{\bf Rank}}
\newcommand{\Tr}{\operatorname{\bf Tr}}
\newcommand{\diag}{\operatorname{\bf diag}}
\newcommand{\ip}[1]{{\langle #1 \rangle}}
\newcommand{\Expect}{\operatorname{\mathbb E}}
\newcommand{\Var}{\operatorname{\mathbb{V}ar}}
\newcommand{\Prob}{\operatorname{\mathbb P}}
\newcommand \D {\mathrm{d}}
\newcommand{\kl}{\operatorname{KL}}
\newcommand{\hnull}{\mathbf{H}_0}
\newcommand{\halt}{\mathbf{H}_1}
\DeclareMathOperator*{\argmin}{arg\,min}
\newcommand{\calA}{\mathcal{A}}
\newcommand{\calB}{\mathcal{B}}
\newcommand{\calC}{\mathcal{C}}
\newcommand{\calD}{\mathcal{D}}
\newcommand{\calE}{\mathcal{E}}
\newcommand{\calF}{\mathcal{F}}
\newcommand{\calL}{\mathcal{L}}
\newcommand{\calN}{\mathcal{N}}
\newcommand{\calP}{\mathcal{P}}
\newcommand{\calV}{\mathcal{V}}
\newcommand{\calX}{\mathcal{X}}
\newcommand{\calY}{\mathcal{Y}}
\newcommand{\calZ}{\mathcal{Z}}
\newcommand{\bbZ}{\mathbb{Z}}
\newcommand{\ind}{\mathds{1}}
\newcommand{\txtover}[2]{\overset{\mbox{\scriptsize #1}}{#2}}
\title{Confidence Sets under Generalized Self-Concordance}
\author{Lang Liu \qquad Zaid Harchaoui \\
Department of Statistics, University of Washington}
\date{}
\begin{document}

\maketitle
\doparttoc 
\faketableofcontents 

\begin{abstract}
    This paper revisits a fundamental problem in statistical inference from a non-asymptotic theoretical viewpoint---the construction of confidence sets. We establish a finite-sample bound for the estimator, characterizing its asymptotic behavior in a non-asymptotic fashion. An important feature of our bound is that its dimension dependency is captured by the effective dimension---the trace of the limiting sandwich covariance---which can be much smaller than the parameter dimension in some regimes. We then illustrate how the bound can be used to obtain a confidence set whose shape is adapted to the optimization landscape induced by the loss function. Unlike previous works that rely heavily on the strong convexity of the loss function, we only assume the Hessian is lower bounded at optimum and allow it to gradually becomes degenerate. This property is formalized by the notion of generalized self-concordance which originated from convex optimization. Moreover, we demonstrate how the effective dimension can be estimated from data and characterize its estimation accuracy. We apply our results to maximum likelihood estimation with generalized linear models, score matching with exponential families, and hypothesis testing with Rao's score test.
\end{abstract}

\section{INTRODUCTION}
\label{sec:intro}
The problem of statistical inference on learned parameters is regaining the importance it deserves
as machine learning and data science are increasingly impacting humanity and society through an increasingly large range of successful applications from transportation to healthcare \citep[see, e.g.,][]{fan2020statistical,efron2021computer}.
The classical asymptotic theory of M-estimation is well established in a rather general setting under the assumption that the parametric model is well-specified, i.e., the underlying data distribution belongs to the parametric family.
Two types of confidence sets can be constructed from this theory: (a) the Wald-type one which relies on the weighted difference between the estimator and the target parameter, and (b) the likelihood-ratio-type one based on the log-likelihood ratio between the estimator and the target parameter.
The main tool is the local asymptotic normality (LAN) condition introduced by \citet{lecam1960locally}.
We mention here, among many of them, the monographs \citep{ibragimov1981statistical,van2000asymptotic,geer2000empirical}.

In many real problems, the parametric model is usually an approximation to the data distribution, so it is too restrictive to assume that the model is well-specified.
To relax this restriction, model misspecification has been considered in the asymptotic regime; see, e.g., \citep{huber1967under,wakefield2013bayesian,dawid2016scoring}.
Another limitation of classical asymptotic theory is its asymptotic regime where $n \rightarrow \infty$ and the parameter dimension $d$ is fixed.
This is inapplicable in the modern context where the data are of a rather high dimension involving a huge number of parameters.

The non-asymptotic viewpoint has been fruitful to address high dimensional problems---the results are developed for all fixed $n$ so that it also captures the asymptotic regime where $d$ grows with $n$.
Early works in this line of research focus on specific models such as Gaussian models~\citep{beran1996confidence,beran1998modulation,laurent2000adaptive,baraud2004confidence}, ridge regression~\citep{hsu2012random}, logistic regression \citep{bach2010self}, and robust M-estimation~\citep{zhou2018huber,chen2020robust}; see~\citet{bach2021learning} for a survey.~\citet{spokoiny2012parametric} addressed the finite-sample regime in full generality in a spirit similar to the classical LAN theory.
The approach of~\cite{spokoiny2012parametric} relies on heavy empirical process machinery and requires strong global assumptions on the deviation of the empirical risk process. More recently,~\citet{ostrovskii2021finite} focused on risk bounds, specializing their discussion to linear models with (pseudo) self-concordant losses and obtained a more transparent analysis under neater assumptions.

\begin{table*}[t]
    \caption{Loss function of generalized linear models.}
    \label{tab:glms}
    \centering
    \renewcommand{\arraystretch}{1.2}
    \begin{tabular}{lccc}
        \addlinespace[0.4em]
        \toprule
        & \multicolumn{1}{c}{\textbf{Data}} & \multicolumn{1}{c}{\textbf{Model}} & \textbf{Loss} \\
        \midrule
        Linear & $(X, Y)$ & $Y \mid X \sim \mathcal{N}(\theta^\top X, \sigma^2)$ & $\frac12 (y - \theta^\top x)^2$ \\
        Logistic & $(X, Y)$ & $Y \mid X \sim \mbox{Bernoulli}\big((1 + e^{-\theta^\top X})^{-1}\big)$ & $\log{\big(1 + \exp(-y(\theta^\top x))\big)}$\\
        Poisson & $(X, Y)$ & $Y \mid X \sim \mbox{Poisson}(\exp(\theta^\top X))$ & $-y(\theta^\top x) + \exp(\theta^\top x)$ \\
        \bottomrule
    \end{tabular}
\end{table*}

A critical tool arising from this line of research is the so-called \emph{Dikin ellipsoid}, a geometric object identified in the theory of convex optimization~\citep{yurii1994interior,ben2001lectures,boyd2004convex,tunccel2010self,bubeck2016black,bubeck:2019}.
The Dikin ellipsoid corresponds to the distance measured by the Euclidean distance weighted by the Hessian matrix at the optimum.
This weighted Euclidean distance is adapted to the geometry near the target parameter and thus leads to sharper bounds that do not depend on the minimum eigenvalue of the Hessian. This important property has been used fruitfully in various problems of learning theory and mathematical statistics \citep{zhang2015disco,yang2016optimistic,faury2020improved}.

\myparagraph{Outline}
We review in \Cref{sec:problem} the empirical risk minimization framework and the two types of confidence sets from classical asymptotic theory.
We establish finite-sample bounds to characterize these two confidences sets, whose sizes are controlled by the \emph{effective dimension}, in a non-asymptotic fashion in \Cref{sec:main_results}.
Our results hold for a general class of models characterized by the notion of \emph{generalized self-concordance}.
Along the way, we show how the effective dimension can be estimated from data and provide its estimation accuracy.
This is a novel result and is of independent interest.
We apply our results to compare Rao's score test, the likelihood ratio test, and the Wald test for goodness-of-fit testing in \Cref{sec:application}.
Finally, in \Cref{sec:experiments}, we illustrate the interest of our results on synthetic data.

\section{PROBLEM FORMULATION}
\label{sec:problem}
We briefly recall the framework of statistical inference via empirical risk minimization.
Let $(\bbZ, \calZ)$ be a measurable space.
Let $Z \in \bbZ$ be a random element following some unknown distribution $\Prob$.
Consider a parametric family of distributions $\calP_\Theta := \{P_\theta: \theta \in \Theta \subset \reals^d\}$ which may or may not contain $\Prob$.
We are interested in finding the parameter $\theta_\star$ so that the model $P_{\theta_\star}$ best approximates the underlying distribution $\Prob$.
For this purpose, we choose a \emph{loss function} $\score$ and minimize the \emph{population risk} $\risk(\theta) := \Expect_{Z \sim \Prob}[\score(\theta; Z)]$.
Throughout this paper, we assume that
\begin{align*}
     \theta_\star = \argmin_{\theta \in \Theta} L(\theta)
\end{align*}
uniquely exists and satisfies $\theta_\star \in \text{int}(\Theta)$, $\nabla_\theta L(\theta_\star) = 0$, and $\nabla_\theta^2 L(\theta_\star) \succ 0$.

\myparagraph{Consistent loss function}
We focus on loss functions that are consistent in the following sense.

\begin{customasmp}{0}\label{asmp:proper_loss}
    When the model is \emph{well-specified}, i.e., there exists $\theta_0 \in \Theta$ such that $\Prob = P_{\theta_0}$, it holds that $\theta_0 = \theta_\star$.
    We say such a loss function is \emph{consistent}.
\end{customasmp}

In the statistics literature, such loss functions are known as proper scoring rules \citep{dawid2016scoring}.
We give below two popular choices of consistent loss functions.

\begin{example}[Maximum likelihood estimation]
    A widely used loss function in statistical machine learning is the negative log-likelihood $\score(\theta; z) := -\log{p_\theta(z)}$ where $p_\theta$ is the probability mass/density function for the discrete/continuous case.
    When $\Prob = P_{\theta_0}$ for some $\theta_0 \in \Theta$,
    we have $L(\theta) = \Expect[-\log{p_\theta(Z)}] = \kl(p_{\theta_0} \Vert p_\theta) - \Expect[\log{p_{\theta_0}(Z)}]$ where $\kl$ is the Kullback-Leibler divergence.
    As a result, $\theta_0 \in \argmin_{\theta \in \Theta} \kl(p_{\theta_0} \Vert p_\theta) = \argmin_{\theta \in \Theta} L(\theta)$.
    Moreover, if there is no $\theta$ such that $p_\theta \txtover{a.s.}{=} p_{\theta_0}$, then $\theta_0$ is the unique minimizer of $L$.
    We give in \Cref{tab:glms} a few examples from the class of generalized linear models (GLMs) proposed by \citet{nelder1972generalized}.
\end{example}

\begin{example}[Score matching estimation]
    Another important example appears in \emph{score matching} \citep{hyvarinen2005estimation}.
    Let $\bbZ = \reals^\tau$.
    Assume that $\Prob$ and $P_\theta$ have densities $p$ and $p_\theta$ w.r.t the Lebesgue measure, respectively.
    Let $p_\theta(z) = q_\theta(z) / \Lambda(\theta)$ where $\Lambda(\theta)$ is an unknown normalizing constant. We can choose the loss
    \begin{align*}
        \score(\theta; z) := \Delta_z \log{q_\theta(z)} + \frac12 \norm{\nabla_z \log{q_\theta(z)}}^2 + \text{const}.
    \end{align*}
    Here $\Delta_z := \sum_{k=1}^p \partial^2/\partial z_k^2$ is the Laplace operator.
    Since \cite[Thm.~1]{hyvarinen2005estimation}
    \begin{align*}
        L(\theta) = \frac12 \Expect\left[ \norm{\nabla_z q_\theta(z) - \nabla_z p(z)}^2 \right],
    \end{align*}
    we have, when $p = p_{\theta_0}$, that $\theta_0 \in \argmin_{\theta \in \Theta} L(\theta)$.
    In fact, when $q_\theta > 0$ and there is no $\theta$ such that $p_\theta \txtover{a.s.}{=} p_{\theta_0}$, the true parameter $\theta_0$ is the unique minimizer of $L$ \cite[Thm.~2]{hyvarinen2005estimation}.
\end{example}

\myparagraph{Empirical risk minimization}
Assume now that we have an i.i.d.~sample $\{Z_i\}_{i=1}^n$ from $\Prob$.
To learn the parameter $\theta_\star$ from the data, we minimize the empirical risk to obtain the \emph{empirical risk minimizer}
\begin{align*}
    \theta_n \in \argmin_{\theta \in \Theta} \left[ L_n(\theta) := \frac1n \sum_{i=1}^n \score(\theta; Z_i) \right].
\end{align*}
This applies to both maximum likelihood estimation and score matching estimation. 
In \Cref{sec:main_results}, we will prove that, with high probability, the estimator $\theta_n$ exists and is unique under a generalized self-concordance assumption.

\begin{figure}
    \centering
    \includegraphics[width=0.45\textwidth]{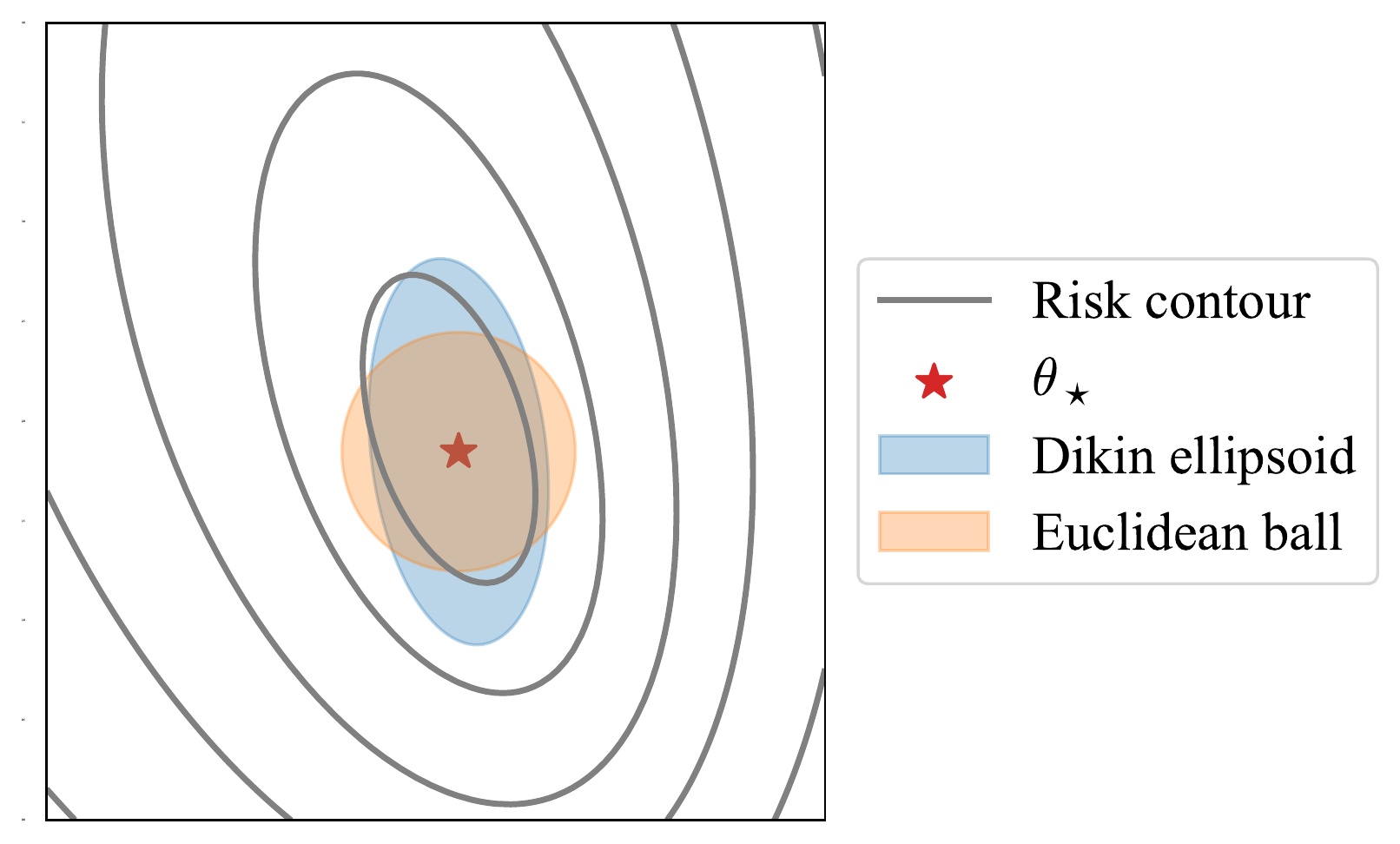} 
    \caption{Dikin ellipsoid and Euclidean ball.}
    \label{fig:logistic_dikin}
\end{figure}

\myparagraph{Confidence set}
In statistical inference, it is of great interest to quantify the uncertainty in the estimator $\theta_n$.
In classical asymptotic theory, this is achieved by constructing an asymptotic confidence set.
We review here two commonly used ones, assuming the model is well-specified.
We start with the \emph{Wald confidence set}.
It holds that $n(\theta_n - \theta_\star)^\top H_n(\theta_n) (\theta_n - \theta_\star) \rightarrow_d \chi_d^2$, where $H_n(\theta) := \nabla^2 L_n(\theta)$.
Hence, one may consider a confidence set $\{\theta: n(\theta_n - \theta)^\top H_n(\theta_n) (\theta_n - \theta) \le q_{\chi_d^2}(\delta) \}$ where $q_{\chi_d^2}(\delta)$ is the upper $\delta$-quantile of $\chi_d^2$.
The other is the \emph{likelihood-ratio (LR) confidence set} constructed from the limit $2n [L_n(\theta_\star) - L_n(\theta_n)] \rightarrow_d \chi_d^2$, which is known as the Wilks' theorem \citep{wilks1938large}.
These confidence sets enjoy two merits: 1) their shapes are an ellipsoid (known as the \emph{Dikin ellipsoid}) which is adapted to the optimization landscape induced by the population risk; 2) they are asymptotically valid, i.e., their coverages are exactly $1 - \delta$ as $n \rightarrow \infty$.
However, due to their asymptotic nature, it is unclear how large $n$ should be in order for it to be valid.

Non-asymptotic theory usually focuses on developing finite-sample bounds for the \emph{excess risk}, i.e., $\Prob(L(\theta_n) - L(\theta_\star) \le C_n(\delta)) \ge 1 - \delta$.
To obtain a confidence set, one may assume that the population risk is twice continuously differentiable and $\lambda$-strongly convex.
Consequently, we have $\lambda \norm{\theta_n - \theta_\star}_2^2 / 2 \le L(\theta_n) - L(\theta_\star)$ and thus we can consider the confidence set $\calC_{\text{finite}, n}(\delta) := \{\theta: \norm{\theta_n - \theta}_2^2 \le 2C_n(\delta)/\lambda\}$.
Since it originates from a finite-sample bound, it is valid for fixed $n$, i.e., $\Prob(\theta_\star \in \calC_{\text{finite}, n}(\delta)) \ge 1 - \delta$ for all $n$; however, it is usually conservative, meaning that the coverage is strictly larger than $1 - \delta$.
Another drawback is that its shape is a Euclidean ball which remains the same no matter which loss function is chosen.
We illustrate this phenomenon in \Cref{fig:logistic_dikin}.
Note that a similar observation has also been made in the bandit literature \citep{faury2020improved}.

We are interested in developing finite-sample confidence sets.
However, instead of using excess risk bounds and strong convexity, we construct in \Cref{sec:main_results} the Wald and LR confidence sets in a non-asymptotic fashion, under a generalized self-concordance condition.
These confidence sets have the same shape as their asymptotic counterparts while maintaining validity for fixed $n$.
These new results are achieved by characterizing the critical sample size enough to enter the asymptotic regime.

\section{MAIN RESULTS}
\label{sec:main_results}
\begin{figure}
    \centering
    \includegraphics[width=0.6\textwidth]{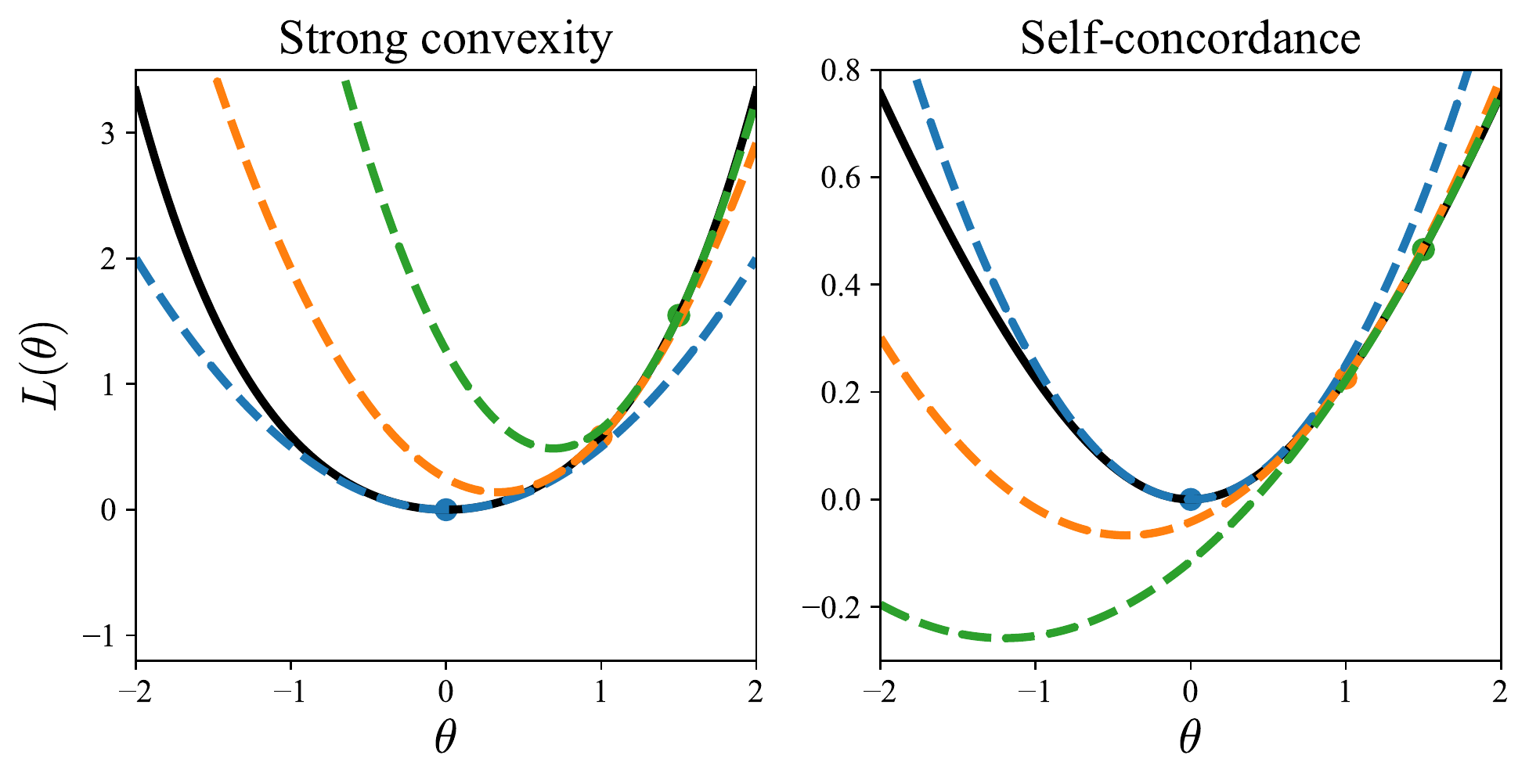} 
    \caption{Strong convexity v.s.~self-concordance. Black curve: population risk; colored dot: reference point; colored dashed curve: quadratic approximation at the corresponding reference point.}
    \label{fig:convex_concordance}
\end{figure}

\subsection{Preliminaries}
\label{sub:preliminary}

\myparagraph{Notation}
We denote by $\grad(\theta; z) := \nabla_\theta \score(\theta; z)$ the gradient of the loss at $z$ and $H(\theta; z) := \nabla_\theta^2 \score(\theta; z)$ the Hessian at $z$.
Their population versions are $\grad(\theta) := \Expect[\grad(\theta; Z)]$ and $H(\theta) := \Expect[H(\theta; Z)]$, respectively.
We assume standard regularity assumptions so that $\grad(\theta) = \nabla_\theta L(\theta)$ and $H(\theta) = \nabla_\theta^2 L(\theta)$.
We write $H_\star := H(\theta_\star)$.
Note that the two optimality conditions then read $\grad(\theta_\star) = 0$ and $H_\star \succ 0$.
It follows that $\lambda_\star := \lambda_{\min}(H_\star) > 0$ and $\lambda^\star := \lambda_{\max}(H_\star) > 0$.
Furthermore, we let $G(\theta; z) := S(\theta; z) S(\theta; z)^\top$ and $G(\theta) := \Expect[\grad(\theta; Z)\grad(\theta; Z)^\top]$ be the autocorrelation matrices of the gradient.
We write $G_\star := G(\theta_\star)$.
We define their empirical quantities as $L_n(\theta) := n^{-1} \sum_{i=1}^n \score(\theta; Z_i)$, $\grad_n(\theta) := n^{-1} \sum_{i=1}^n \grad(\theta; Z_i)$, $H_n(\theta) := n^{-1} \sum_{i=1}^n H(\theta; Z_i)$, and $G_n(\theta) := n^{-1} \sum_{i=1}^n G(\theta; Z_i)$.
The first step of our analysis is to localize the estimator to a \emph{Dikin ellipsoid} at $\theta_\star$ of radius $r$, i.e.,
\begin{align*}
    \Theta_r(\theta_\star) := \left\{\theta \in \Theta: \norm{\theta - \theta_\star}_{H_\star} < r \right\},
\end{align*}
where, given a positive semi-definite matrix $J$, we let $\norm{x}_J := \norm{J^{1/2} x}_2 = \sqrt{x^\top J x}$.

\myparagraph{Effective dimension}
A quantity that plays a central role in our analysis is the \emph{effective dimension}.
\begin{definition}
\label{def:effective_dim}
    We define the effective dimension to be
    \begin{align}
        d_\star := \Tr( H_\star^{-1/2} G_\star H_\star^{-1/2} ).
    \end{align}
\end{definition}
The effective dimension appears recently in non-asymptotic analyses of (penalized) M-estimation; see, e.g., \citep{spokoiny2017penalized,ostrovskii2021finite}.
It better characterizes the complexity of the parameter space $\Theta$ than the parameter dimension $d$.
When the model is well-specified, it can be shown that $H_\star = G_\star$ and thus $d_\star = d$.
When the model is misspecified, it can be much smaller than $d$ depending on the spectra of $H_\star$ and $G_\star$.
Moreover, it is closely connected to classical asymptotic theory of M-estimation under model misspecification---it is the trace of the limiting covariance matrix of $\sqrt{n}H_n(\theta_n)^{1/2}(\theta_n - \theta_\star)$;
see \Cref{sub:discussion} for a thorough discussion.

\myparagraph{Generalized self-concordance}
We will use the notion of \emph{self-concordance} from convex optimization in our analysis.
Self-concordance originated from the analysis of the interior-point and Newton-type convex optimization methods \citep{yurii1994interior}.
It was later modified by \citet{bach2010self}, which we call the \emph{pseudo self-concordance}, to derive finite-sample bounds for the generalization properties of the logistic regression.
Recently, \citet{sun2019generalized} proposed the \emph{generalized self-concordance} which unifies these two notions.
For a function $f: \reals^d \to \reals$, we define $D_x f(x)[u] := \frac{\D}{\D t} f(x + tu) |_{t = 0}$, $D_x^2 f(x)[u, v] := D_x (D_x f(x)[u])[v]$ for $x, u, v \in \reals^d$, and $D_x^3 f(x)[u, v, w]$ similarly.
\begin{definition}[Generalized self-concordance]
\label{def:general_self_concordance}
    Let $\calX \subset \reals^d$ be open and $f: \calX \rightarrow \reals$ be a closed convex function.
    For $R > 0$ and $\nu > 0$, we say $f$ is $(R, \nu)$-generalized self-concordant on $\calX$ if
    \begin{align*}
        \abs{D_x^3 f(x) [u, u, v]} \le R \norm{u}_{\nabla^2 f(x)}^2 \norm{v}_{\nabla^2 f(x)}^{\nu-2} \norm{v}_2^{3-\nu}
    \end{align*}
    with the convention $0/0 = 0$ for the case $\nu < 2$ and $\nu > 3$.
    Recall that $\norm{u}_{\nabla^2 f(x)}^2 := u^\top \nabla^2 f(x) u$.
\end{definition}

\myparagraph{Remark}
When $\nu = 2$ and $\nu = 3$, this definition recovers the pseudo self-concordance and the standard self-concordance, respectively.

In contrast to strong convexity which imposes a gross lower bound on the Hessian, generalized self-concordance specifies the rate at which the Hessian can vary, leading to a finer control on the Hessian.
Concretely, it allows us to bound the Hessian in a neighborhood of $\theta_\star$ with the Hessian at $\theta_\star$, which is key to controlling $H_n(\theta_n)$.
We illustrate the difference between them in \Cref{fig:convex_concordance}.
As we will see in \Cref{sub:main_results}, thanks to the generalized self-concordance, we are able to remove the direct dependency on $\lambda_\star$ in our confidence set.
To the best of our knowledge, this is the first work extending classical results for M-estimation to generalized self-concordant losses.

\myparagraph{Concentration of Hessian}
One key result towards deriving our bounds is the concentration of empirical Hessian, i.e., $(1 - c_n(\delta))H(\theta) \preceq H_n(\theta) \preceq (1 + c_n(\delta)) H(\theta)$ with probability at least $1 - \delta$.
When the loss function is of the form $\ell(\theta; z) := \ell(y, \theta^\top x)$ (e.g., GLMs), the empirical Hessian reads $H_n(\theta) = n^{-1} \sum_{i=1}^n \ell''(Y_i, \theta^\top X_i) X_i X_i^\top$ where $\ell''(y, \bar y) := \D^2 \ell(y, \bar y) / \D \bar y^2$, which is of the form of a sample covariance.
Assuming $X$ to be sub-Gaussian, \citet{ostrovskii2021finite} obtained a concentration bound for $H_n(\theta_\star)$ with $c_n(\delta) = O(\sqrt{(d + \log{(1/\delta)})/n})$ via the concentration bound for sample covariance \citep[Thm.~5.39]{vershynin2010introduction}.
For general loss functions, such a special structure cannot be exploited.
We overcame this challenge by the matrix Bernstein inequality \citep[Thm.~6.17]{wainwright2019high}, obtaining a sharper concentration bound with $c_n(\delta) := O(\sqrt{\log{(d/\delta)}/n})$.
Note that the matrix Bernstein inequality has been used to control the empirical Hessian of kernel ridge regression with random features \citep[Prop.~6]{rudi2017generalization} and later extended to regularized empirical risk minimization \citep[Lem.~30]{marteau2019beyond}.
However, their results require the regularization parameter to be strictly positive (otherwise the bounds are vacuous) and the sample Hessian to be bounded.
On the contrary, our technique allows for zero regularization and unbounded Hessian as long as the Hessian satisfies a matrix Bernstein condition.
Moreover, combining generalized self-concordance with matrix Bernstein, we are able to show the concentration of $H_n(\theta_n)$ around $H_\star$ for general losses, which is itself a novel result.

\subsection{Assumptions}
\label{sub:assumption}

Our key assumption is the generalized self-concordance of the loss function.
\begin{assumption}[Generalized self-concordance]
\label{asmp:self_concordance}
    For any $z \in \calZ$, the scoring rule $\score(\cdot; z)$ is $(R, \nu)$-generalized self-concordant for some $R > 0$ and $\nu \ge 2$.
    Moreover, $\risk(\cdot)$ is also $(R, \nu)$-generalized self-concordant.
\end{assumption}

\myparagraph{Remark}
If $\score(\cdot; z)$ is generalized self-concordant with $\nu = 2$, so is $\risk(\cdot)$.

Many loss functions in statistical machine learning satisfy this assumption.
We give in \Cref{sub:examples} examples from generalized linear models and score matching.

In order to control the empirical gradient $\grad_n(\theta)$, we assume that the normalized gradient at $\theta_\star$ is sub-Gaussian.
\begin{assumption}[Sub-Gaussian gradient]
\label{asmp:sub_gaussian}
    There exists a constant $K_1 > 0$ such that the normalized gradient at $\theta_\star$ is sub-Gaussian with parameter $K_1$, i.e., $\lVert G_\star^{-1/2} \grad(\theta_\star; Z) \rVert_{\psi_2} \le K_1$.
    Here $\norm{\cdot}_{\psi_2}$ is the sub-Gaussian norm whose definition is recalled in \Cref{sec:tools}.
\end{assumption}

When the loss function is of the form $\ell(\theta; z) = \ell(y, \theta^\top x)$, we have $S(\theta; Z) = \ell'(Y, \theta^\top X) X$.
As a result, \Cref{asmp:sub_gaussian} holds true if (i) $\ell'(Y, \theta_\star^\top X)$ is sub-Gaussian and $X$ is bounded or (ii) $\ell'(Y, \theta_\star^\top X)$ is bounded and $X$ is sub-Gaussian.
For least squares with $\ell(y, \theta^\top x) = \frac12 (y - \theta^\top x)^2$, the derivative $\ell'(Y, \theta_\star^\top X) = \theta_\star^\top X - Y$ is the negative residual.
\Cref{asmp:sub_gaussian} is guaranteed if the residual is sub-Gaussian and $X$ is bounded.
For logistic regression with $\ell(y, \theta^\top x) = -\log{\sigma(y\cdot \theta^\top x)}$ where $\sigma(u) = (1 + e^{-u})^{-1}$, the derivative $\ell'(Y, \theta_\star^\top X) = [\sigma(Y \cdot \theta_\star^\top X) - 1]Y \in [-1, 1]$ is bounded.
Thus, \Cref{asmp:sub_gaussian} is guaranteed if $X$ is sub-Gaussian.

In order to control the empirical Hessian, we assume that the Hessian of the loss function satisfies the matrix Bernstein condition in a neighborhood of $\theta_\star$.

\begin{assumption}[Matrix Bernstein of Hessian]
\label{asmp:bernstein}
    There exist constants $K_2, r > 0$ such that, for any $\theta \in \Theta_{r}(\theta_\star)$, the standardized Hessian
    \begin{align*}
        H(\theta)^{-1/2} H(\theta; Z) H(\theta)^{-1/2} - I_d
    \end{align*}
    satisfies a Bernstein condition (defined in \Cref{sec:tools}) with parameter $K_2$. Moreover,
    \begin{align*}
        \sigma_H^2 := \sup_{\theta \in \Theta_{r}(\theta_\star)} \norm{\Var\left( H(\theta)^{-\frac12}H(\theta; Z)H(\theta)^{-\frac12} \right)}_2 < \infty,
    \end{align*}
    where $\norm{\cdot}_2$ is the spectral norm and $\Var(J) := \Expect[JJ^\top] - \Expect[J] \Expect[J]^\top$.
    By convention, we let $\Theta_0(\theta_\star) = \{\theta_\star\}$.
\end{assumption}

\subsection{Main Results}
\label{sub:main_results}

We now give simplified versions of our main theorems.
We use $C_\nu$ to represent a constant depending only on $\nu$ that may change from line to line; and $C_{K_1, \nu}$ similarly.
We use $\lesssim$ and $\gtrsim$ to hide constants depending only on $K_1, K_2, \sigma_H, \nu$.
The precise versions can be found in \Cref{sec:proofs}.
Recall that $\lambda_\star := \lambda_{\min}(H_\star)$ and $\lambda^\star := \lambda_{\max}(H_\star)$.
\begin{theorem}\label{thm:risk_bound_generalized}
    Let $\nu \in [2, 3)$.
    Under \Cref{asmp:self_concordance,asmp:sub_gaussian,asmp:bernstein} with $r = 0$, it holds that,
    whenever
    \begin{align*}
        n \gtrsim \log{(2d/\delta)} + \lambda_\star^{-1} \left[ R^2 d_\star \log{(e/\delta)} \right]^{1/(3-\nu)},
    \end{align*}
    the empirical risk minimizer $\theta_n$ uniquely exists and satisfies, with probability at least $1 - \delta$,
    \begin{align}\label{eq:conf_bound}
        \norm{\theta_n - \theta_\star}^2_{H_\star} \lesssim \log{(e/\delta)} \frac{d_\star}{n}.
    \end{align}
\end{theorem}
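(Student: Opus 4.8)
The plan is to prove all three conclusions---existence, uniqueness, and the rate---at once through a \emph{localization} argument built from two independent concentration statements (one for the empirical gradient at $\theta_\star$, one for the empirical Hessian) glued together by the generalized self-concordance of $L_n$. The guiding picture is that $\theta_n$ is, to first order, a Newton step away from $\theta_\star$: writing the optimality condition $\grad_n(\theta_n)=0$ and using the fundamental theorem of calculus gives $\theta_n-\theta_\star=-\bar H_n^{-1}\grad_n(\theta_\star)$ with $\bar H_n:=\int_0^1 H_n(\theta_\star+t(\theta_n-\theta_\star))\,\D t$, so the target norm is controlled once I bound the numerator $\norm{\grad_n(\theta_\star)}_{H_\star^{-1}}$ and lower bound $\bar H_n$ by a multiple of $H_\star$. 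The circularity (I need $\theta_n$ close to bound $\bar H_n$, and I need $\bar H_n$ curved to show $\theta_n$ close) is resolved by a bootstrap over a fixed Dikin ball.

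First I would control the gradient. Since $\grad(\theta_\star)=0$, the vector $\grad_n(\theta_\star)=n^{-1}\sum_i\grad(\theta_\star;Z_i)$ is an average of i.i.d.\ mean-zero terms whose standardized covariance $H_\star^{-1/2}G_\star H_\star^{-1/2}$ has trace exactly $d_\star$ by \Cref{def:effective_dim}. Under the sub-Gaussian \Cref{asmp:sub_gaussian}, a tail bound for the squared Euclidean norm of a sub-Gaussian average yields, with probability at least $1-\delta/2$,
\[
  \norm{\grad_n(\theta_\star)}_{H_\star^{-1}}^2 \;\lesssim\; \frac{d_\star}{n}\,\log(e/\delta)\;=:\;\rho^2 .
\]

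Next I would handle the Hessian. Because the theorem takes $r=0$, \Cref{asmp:bernstein} supplies a matrix Bernstein condition only \emph{at} $\theta_\star$; the matrix Bernstein inequality then gives, with probability $1-\delta/2$, the two-sided bound $(1-c_n(\delta))H_\star\preceq H_n(\theta_\star)\preceq(1+c_n(\delta))H_\star$ with $c_n(\delta)\lesssim\sqrt{\log(2d/\delta)/n}$, and the first term of the sample-size hypothesis forces $c_n(\delta)\le\tfrac12$, i.e.\ $H_n(\theta_\star)\asymp H_\star$. To upgrade this pointwise statement to a uniform one over a neighborhood---which is unavoidable since no Bernstein control is available away from $\theta_\star$---I use that $L_n$, being an average of $(R,\nu)$-generalized self-concordant losses, is itself $(R,\nu)$-generalized self-concordant. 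The associated Hessian-comparison inequality bounds $H_n(\theta)$ below by a constant multiple of $H_n(\theta_\star)$, hence of $H_\star$, as long as the self-concordant distance $d_\nu(\theta_\star,\theta)\lesssim 1$; converting its mixed $\ell_2/H_\star$ form via $\norm{\cdot}_2\le\lambda_\star^{-1/2}\norm{\cdot}_{H_\star}$ shows this holds on a Dikin ball $\Theta_{r_0}(\theta_\star)$ whose radius scales like $r_0\asymp\lambda_\star^{(3-\nu)/2}/R$ (up to a $\nu$-dependent factor), on which $H_n(\theta)\succeq\tfrac12 H_\star$. Imposing that the gradient displacement $\rho$ fit inside this curved region, i.e.\ $\rho\lesssim r_0$, is exactly what substituting $\rho^2\asymp(d_\star/n)\log(e/\delta)$ turns into the second, self-concordance-driven term of the sample-size requirement.

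On the intersection of the two probability-$1-\delta/2$ events, $L_n$ is $\tfrac12 H_\star$-strongly convex on the closed ball $\bar\Theta_{r_0}(\theta_\star)$, so it has a unique constrained minimizer $\hat\theta$ there, and strong convexity combined with the gradient bound gives $\norm{\hat\theta-\theta_\star}_{H_\star}\lesssim\rho<r_0$; thus $\hat\theta$ lies in the interior and is stationary. Since the losses are convex, a stationary point is a global minimizer, and the positive-definite Hessian at $\hat\theta$ forces it to be the unique global minimizer, so $\theta_n$ exists, is unique, equals $\hat\theta$, and obeys $\norm{\theta_n-\theta_\star}_{H_\star}^2\lesssim\rho^2=(d_\star/n)\log(e/\delta)$, which is \eqref{eq:conf_bound}. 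I expect the Hessian step to be the main obstacle: with $r=0$ all curvature away from $\theta_\star$ must be manufactured from self-concordance alone, and carefully tracking the $\nu\in[2,3)$ Hessian-comparison factor together with the $\ell_2$-to-$H_\star$ norm conversion is precisely what pins down the $\lambda_\star$-, $R$-, and $\nu$-dependence of the critical sample size and makes the $\rho$-versus-$r_0$ balance rigorous rather than heuristic.
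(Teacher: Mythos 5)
Your overall architecture matches the paper's: concentrate $\norm{\grad_n(\theta_\star)}_{H_\star^{-1}}^2$ at the rate $d_\star\log(e/\delta)/n$ via the quadratic-form tail bound for isotropic sub-Gaussian vectors, sandwich $H_n(\theta_\star)$ between constant multiples of $H_\star$ via matrix Bernstein, and then use generalized self-concordance of the \emph{empirical} risk to localize $\theta_n$ and establish existence and uniqueness (the paper packages this last step as \Cref{prop:localization}, proved by showing the sublevel set $\{L_n \le L_n(\theta_\star)\}$ is compact with positive-definite Hessian on it, which is the same idea as your constrained-minimizer-in-the-interior argument).

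There is, however, one genuine gap, and it sits exactly at the step you flag as the main obstacle. You assert that $L_n$, being an average of $(R,\nu)$-generalized self-concordant losses, is itself $(R,\nu)$-generalized self-concordant. This is true only for $\nu=2$. For $\nu\in(2,3)$ the parameter degrades: averaging $n$ such functions yields an $(n^{\nu/2-1}R,\nu)$-generalized self-concordant function (\Cref{lem:emp_risk_self_concordance}, via \citet[Prop.~1]{sun2019generalized}); the loss of a factor $n^{\nu/2-1}$ comes from bounding $\norm{v}_{\nabla^2\ell(\cdot;Z_i)}^{\nu-2}$ by $n^{(\nu-2)/2}\norm{v}_{\nabla^2 L_n}^{\nu-2}$. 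Consequently the Dikin ball on which the empirical Hessian comparison holds has radius $r_0\asymp\lambda_\star^{(3-\nu)/2}/(n^{\nu/2-1}R)$, which \emph{shrinks} with $n$, not the $n$-free radius $\lambda_\star^{(3-\nu)/2}/R$ you state. This is not a cosmetic constant: with your radius, the balance $\rho\lesssim r_0$ gives $n\gtrsim \lambda_\star^{-(3-\nu)}R^2 d_\star\log(e/\delta)$, which does not match the theorem's second term $\lambda_\star^{-1}[R^2 d_\star\log(e/\delta)]^{1/(3-\nu)}$ for any $\nu>2$; with the correct radius, $\rho\lesssim r_0$ becomes $R^2 d_\star\log(e/\delta)\lesssim(\lambda_\star n)^{3-\nu}$, which is exactly the stated condition. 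So the exponent $1/(3-\nu)$ in the critical sample size is precisely the footprint of the degradation you omitted. Once you replace $(R,\nu)$ by $(n^{\nu/2-1}R,\nu)$ for $L_n$ (and, correspondingly, measure the localization radius against $\lambda_{\min}(H_n(\theta_\star))\asymp\lambda_\star$ on the Bernstein event, as in the paper's $R_{n,\nu}^\star$ of \eqref{eq:R_n_nu_star}), your bootstrap closes and reproduces the theorem.
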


With a local matrix Bernstein condition, we can replace $H_\star$ by $H_n(\theta_n)$ in \eqref{eq:conf_bound} and obtain a finite-sample version of the Wald confidence set.
\begin{theorem}\label{thm:conf_set}
    Let $\nu \in [2, 3)$.
    Suppose the same assumptions in \Cref{thm:risk_bound_generalized} hold true.
    Furthermore, suppose that \Cref{asmp:bernstein} holds with $r = C_\nu \lambda_\star^{(3-\nu)/2} / R$.
    Let $\calC_{\text{Wald}, n}(\delta)$ be
    \begin{align}\label{eq:my_conf_set}
        \left\{\theta \in \Theta: \norm{\theta - \theta_n}_{H_n(\theta_n)}^2 \le C_{K_1,\nu} \frac{d_\star}{n} \log{\frac{e}{\delta}} \right\}.
    \end{align}
    Then we have $\Prob(\theta_\star \in \calC_{\text{Wald}, n}(\delta)) \ge 1 - \delta$ whenever
    \begin{align}\label{eq:n_large_enough}
        n \gtrsim \log{\frac{2d}{\delta}} + d\log{n} + \lambda_\star^{-1}\left[ R^2 d_\star \log{\frac{e}{\delta}} \right]^{\frac1{3-\nu}}.
    \end{align}
\end{theorem}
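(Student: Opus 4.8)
The plan is to reduce membership of $\theta_\star$ in the confidence set to a one-sided Loewner comparison between the empirical Hessian $H_n(\theta_n)$ and the population Hessian $H_\star$, and then to transfer the risk bound of \Cref{thm:risk_bound_generalized} through this comparison. Observe that $\theta_\star \in \calC_{\text{Wald}, n}(\delta)$ in \eqref{eq:my_conf_set} is equivalent to $\norm{\theta_\star - \theta_n}_{H_n(\theta_n)}^2 \le C_{K_1,\nu}\,(d_\star/n)\log(e/\delta)$. Since \Cref{thm:risk_bound_generalized} already controls $\norm{\theta_n - \theta_\star}_{H_\star}^2$ by a constant times $(d_\star/n)\log(e/\delta)$ on a high-probability event, it suffices to establish an upper bound of the form $H_n(\theta_n) \preceq C\,H_\star$ for an absolute constant $C$; then $\norm{\theta_\star - \theta_n}_{H_n(\theta_n)}^2 \le C\,\norm{\theta_\star - \theta_n}_{H_\star}^2$ and the conclusion follows by absorbing $C$ into $C_{K_1,\nu}$. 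Note that only this direction of the spectral equivalence is needed for coverage.

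First I would split the comparison into two pieces, $H_n(\theta_n) \preceq (1 + c_n)\,H(\theta_n)$ and $H(\theta_n) \preceq c_0\, H_\star$. The second, deterministic piece is where \Cref{asmp:self_concordance} enters: integrating the third-derivative bound of generalized self-concordance along the segment from $\theta_\star$ to $\theta_n$ yields a multiplicative control of $H(\theta)$ by $H_\star$ whenever $\theta$ lies in the Dikin ellipsoid $\Theta_r(\theta_\star)$. Converting the $\norm{v}_2^{3-\nu}$ factors in \Cref{def:general_self_concordance} into the $H_\star$-norm produces powers of $\lambda_\star$, which is precisely why the admissible radius takes the form $r = C_\nu \lambda_\star^{(3-\nu)/2}/R$. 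The hypothesis of \Cref{thm:conf_set} that \Cref{asmp:bernstein} hold with this $r$, together with the localization $\theta_n \in \Theta_r(\theta_\star)$ guaranteed by \Cref{thm:risk_bound_generalized} under the third term of \eqref{eq:n_large_enough}, ensures both pieces are applicable.

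\textbf{The main obstacle is the first, random piece: concentration of the empirical Hessian at the data-dependent point $\theta_n$.} Because $\theta_n$ is a function of the sample, I cannot apply the pointwise matrix Bernstein bound of \Cref{asmp:bernstein} directly at $\theta = \theta_n$. The plan is instead to prove a \emph{uniform} bound $\sup_{\theta \in \Theta_r(\theta_\star)} \norm{H(\theta)^{-1/2} H_n(\theta) H(\theta)^{-1/2} - I_d}_2 \le c_n$ via a covering argument: cover $\Theta_r(\theta_\star)$ by an $\epsilon$-net, apply matrix Bernstein with a union bound over the net, and use the generalized self-concordance of each summand $\score(\cdot; Z_i)$ to control the variation of the standardized Hessian between a point and its nearest net element. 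A net of resolution $\epsilon$ has cardinality $(C/\epsilon)^{d}$, so the union bound contributes $\log(1/\delta) + d\log(1/\epsilon)$ to the effective failure budget; taking $\epsilon$ polynomially small in $n$ gives a contribution of order $\log(1/\delta) + d\log n$, and requiring this to be $o(n)$ so that $c_n = o(1)$ is exactly the source of the new $d\log n$ term in \eqref{eq:n_large_enough} that is absent from \Cref{thm:risk_bound_generalized}.

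Finally I would combine the events. On the intersection of the event of \Cref{thm:risk_bound_generalized} (existence and uniqueness of $\theta_n$, the risk bound \eqref{eq:conf_bound}, and $\theta_n \in \Theta_r(\theta_\star)$) and the uniform-Hessian event, each holding with probability at least $1 - \delta/2$ after adjusting constants, the chain $H_n(\theta_n) \preceq (1+c_n)\,H(\theta_n) \preceq (1+c_n)\,c_0\, H_\star \preceq C\,H_\star$ holds, so that $\norm{\theta_\star - \theta_n}_{H_n(\theta_n)}^2 \le C\,\norm{\theta_\star - \theta_n}_{H_\star}^2 \lesssim (d_\star/n)\log(e/\delta)$. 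A union bound over the two events then yields the claimed coverage $\Prob(\theta_\star \in \calC_{\text{Wald}, n}(\delta)) \ge 1 - \delta$ under \eqref{eq:n_large_enough}, completing the argument.
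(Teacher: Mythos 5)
Your proposal is correct and follows essentially the same route as the paper: localize $\theta_n$ to the Dikin ellipsoid via \Cref{thm:risk_bound_generalized}, then establish a uniform Loewner comparison $H_n(\theta)\preceq C\,H_\star$ over that ellipsoid by an $\epsilon$-net of polynomially small (in $n$) radius chosen to offset the $n^{\nu/2-1}$ blow-up in the self-concordance parameter of $L_n$, with matrix Bernstein plus a union bound over the net supplying the $d\log n$ term in \eqref{eq:n_large_enough} (this is exactly \Cref{prop:emp_hess_est}). The only cosmetic difference is that you route the comparison through $H(\theta_n)$ rather than through $H(\pi(\theta))$ and note that only the upper bound $H_n(\theta_n)\preceq C H_\star$ is needed for coverage, whereas the paper proves the two-sided sandwich.
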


\myparagraph{Remark}
In the precise versions of \Cref{thm:risk_bound_generalized,thm:conf_set}, the term $d_\star \log{(e/\delta)}$ in the bounds \eqref{eq:conf_bound} and \eqref{eq:my_conf_set} should be replaced by $d_\star + \log{(e/\delta)} \lVert G_\star^{1/2} H_\star^{-1} G_\star^{1/2} \rVert_2$, which almost match the misspecified Cram\'er-Rao lower bound \citep[e.g.,][Thm.~1]{fortunati2016misspecified} up to a constant factor.

\Cref{thm:conf_set} suggests that the tail probability of $\norm{\theta_n - \theta_\star}_{H_n(\theta_n)}^2$ is governed by a $\chi^2$ distribution with $d_\star$ degrees of freedom, which coincides with the asymptotic result.
In fact, according to \citet{huber1967under}, under suitable regularity assumptions, it holds that $\sqrt{n} H_n(\theta_n)^{1/2}(\theta_n - \theta_\star) \rightarrow_d W \sim \mathcal{N}(0, H_\star^{-1/2} G_\star H_\star^{-1/2})$ which implies that
\begin{align*}
    n(\theta_n - \theta_\star)^\top H_n(\theta_n) (\theta_n - \theta_\star) \rightarrow_d W^\top W.
\end{align*}
This induces an asymptotic confidence set with a similar form of \eqref{eq:my_conf_set} and radius $O(\Expect[W^\top W] / n) = O(d_\star / n)$.
Our result characterizes the \emph{critical sample size} enough to enter the asymptotic regime.

From \Cref{thm:conf_set} we can also derive a finite-sample version of the LR confidence set.
\begin{corollary}\label{cor:lr_conf_set}
    Let $\nu \in [2, 3)$.
    Suppose the same assumptions in \Cref{thm:conf_set} hold true.
    Let $\calC_{\text{LR}, n}(\delta)$ be
    \begin{align}\label{eq:lr_conf_set}
        \left\{\theta \in \Theta: 2[L_n(\theta) - L_n(\theta_n)] \le C_{K_1,\nu} \frac{d_\star}{n} \log{\frac{e}{\delta}} \right\}.
    \end{align}
    Then we have $\Prob(\theta_\star \in \calC_{\text{LR}, n}(\delta)) \ge 1 - \delta$ whenever
    \begin{align*}
        n \gtrsim \log{\frac{2d}{\delta}} + d\log{n} + \lambda_\star^{-1}\left[ R^2 d_\star \log{\frac{e}{\delta}} \right]^{\frac1{3-\nu}}.
    \end{align*}
\end{corollary}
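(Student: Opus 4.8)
The strategy is to control the log-likelihood-ratio statistic at $\theta_\star$ by the Wald statistic that \Cref{thm:conf_set} already bounds, so that no new concentration argument is needed and the sample-size requirement is inherited verbatim. Since $\theta_n$ lies in the interior of $\Theta$ and minimizes $L_n$, we have $\grad_n(\theta_n) = 0$, and a second-order Taylor expansion with integral remainder gives
\[
2[L_n(\theta_\star) - L_n(\theta_n)] = (\theta_\star - \theta_n)^\top \bbar{H}_n (\theta_\star - \theta_n), \qquad \bbar{H}_n := 2\int_0^1 (1 - t)\, H_n\big(\theta_n + t(\theta_\star - \theta_n)\big)\, \D t.
\]
Thus it suffices to show that $\bbar{H}_n \preceq c_\nu H_n(\theta_n)$ on the high-probability event of \Cref{thm:conf_set}, for some constant $c_\nu$ depending only on $\nu$; the claim then follows by absorbing $c_\nu$ into the constant $C_{K_1, \nu}$ of \eqref{eq:my_conf_set}.

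To obtain the Hessian comparison, I would first localize: \Cref{thm:risk_bound_generalized} guarantees $\norm{\theta_n - \theta_\star}_{H_\star}$ is small on the same event, so for $n$ satisfying the stated lower bound both $\theta_n$ and $\theta_\star$---and hence, by convexity, the entire segment joining them---lie in the Dikin ellipsoid $\Theta_r(\theta_\star)$. Because $H_n$ is an average of the per-sample losses $\score(\cdot; Z_i)$, it is itself $(R, \nu)$-generalized self-concordant by \Cref{asmp:self_concordance}. Integrating the defining third-derivative inequality of \Cref{def:general_self_concordance} along the segment yields a multiplicative upper bound $H_n(\theta) \preceq \omega_\nu^+\!\big(\norm{\theta - \theta_n}_{H_n(\theta_n)}\big)\, H_n(\theta_n)$, where $\omega_\nu^+$ is the standard generalized-self-concordant growth factor (for $\nu \in (2,3)$ a power of $1 - \alpha \norm{\theta - \theta_n}_{H_n(\theta_n)}$, with $\alpha$ depending on $R, \nu$; the exponential factor $e^{\alpha \norm{\cdot}}$ at $\nu = 2$). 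On the good event the radius condition $r = C_\nu \lambda_\star^{(3-\nu)/2}/R$ keeps $\omega_\nu^+$ bounded by a constant $c_\nu$, giving $\bbar{H}_n \preceq c_\nu H_n(\theta_n)$ as required.

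Combining the two steps, on the event of probability at least $1 - \delta$ we obtain
\[
2[L_n(\theta_\star) - L_n(\theta_n)] \le c_\nu \norm{\theta_\star - \theta_n}^2_{H_n(\theta_n)} \le c_\nu\, C_{K_1, \nu}\, \frac{d_\star}{n} \log\frac{e}{\delta},
\]
which places $\theta_\star$ in $\calC_{\text{LR}, n}(\delta)$ after renaming the constant. I expect the main obstacle to be the Hessian comparison: one must verify that the averaged Hessian $\bbar{H}_n$, rather than a single Hessian at one point, is still dominated by $H_n(\theta_n)$, which requires the self-concordant growth factor to remain controlled uniformly over the whole integration segment, and that the $H_n(\theta_n)$-norm appearing in $\omega_\nu^+$ is interchangeable, up to constants, with the $H_\star$-norm governing the Dikin localization. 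Both reduce to the concentration $H_n(\theta_n) \asymp H_\star$ established en route to \Cref{thm:conf_set}, so no genuinely new estimate is needed---only careful bookkeeping of constants.
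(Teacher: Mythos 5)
Your overall route is the paper's route: Taylor-expand $L_n$ around $\theta_n$, use $\nabla L_n(\theta_n)=0$ to write $2[L_n(\theta_\star)-L_n(\theta_n)]$ as a quadratic form in $\theta_\star-\theta_n$ with a Hessian evaluated on the segment, and then dominate that Hessian by $H_n(\theta_n)$ so the claim reduces to the Wald bound of \Cref{thm:conf_set}. (The paper uses the Lagrange form of the remainder, you use the integral form; this is immaterial.) The one genuine flaw is your justification of the Hessian comparison: the empirical risk $L_n$ is \emph{not} $(R,\nu)$-generalized self-concordant for $\nu>2$ --- by \Cref{lem:emp_risk_self_concordance} it is $(n^{\nu/2-1}R,\nu)$-self-concordant, with a parameter that diverges in $n$, which is exactly why the paper proves the uniform two-sided bound $H_n(\theta)\asymp H_\star$ over the Dikin ellipsoid (\Cref{prop:emp_hess_est}, event $\calC$ in the proof of \Cref{thm:conf_set}) by a covering argument rather than by self-concordance of $L_n$ alone. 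The cleanest repair is simply to invoke that already-established event: the segment lies in $\Theta_{r_n}(\theta_\star)$, so every $H_n(\theta)$ on it satisfies $H_n(\theta)\preceq \tfrac32\omega_\nu^2(r_nR_\nu^\star)H_\star\preceq c_\nu H_n(\theta_n)$, and your convex combination $\bbar H_n$ inherits the same bound. Your direct route can also be salvaged, but only because the segment has $H_\star$-length $O(\sqrt{d_\star\log(e/\delta)/n})$, so the relevant quantity $d_{n,\nu}\lesssim n^{\nu/2-1}R\,\lambda_\star^{(\nu-3)/2}\,n^{-1/2}\sqrt{d_\star\log(e/\delta)}$ stays below a constant precisely under the stated condition $n\gtrsim\lambda_\star^{-1}[R^2d_\star\log(e/\delta)]^{1/(3-\nu)}$; as written, your appeal to an $(R,\nu)$ parameter skips this cancellation and would not survive scrutiny for $\nu\in(2,3)$.
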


We give the proof sketches of \Cref{thm:risk_bound_generalized}, \Cref{thm:conf_set}, and \Cref{cor:lr_conf_set} here and defer their full proofs to \Cref{sec:proofs}.
We discuss in~\Cref{sub:discussion} 
how our proof techniques and theoretical results complement and improve on previous works.

We start by showing the existence and uniqueness of $\theta_n$.
The next result shows that $\theta_n$ exists and is unique whenever the quadratic form $\grad_n(\theta_\star)^\top H_n^{-1}(\theta_\star) \grad_n(\theta_\star)$ is small.
Note that this quantity is also known as Rao's score statistic for goodness-of-fit testing.
This result also localizes $\theta_n$ to a neighborhood of the target parameter $\theta_\star$.
\begin{proposition}\label{prop:localization}
    Under \Cref{asmp:self_concordance},
    if $\norm{\grad_n(\theta_\star)}_{H_n^{-1}(\theta_\star)} \le C_{\nu} [\lambda_{\min}(H_n(\theta_\star))]^{(3-\nu)/2} / (R n^{\nu/2-1})$,
    then the estimator $\theta_n$ uniquely exists and satisfies
    \begin{align*}
        \norm{\theta_n - \theta_\star}_{H_n(\theta_\star)} \le 4 \norm{\grad_n(\theta_\star)}_{H_n^{-1}(\theta_\star)}.
    \end{align*}
\end{proposition}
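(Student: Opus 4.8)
The plan is to exploit the fact that the empirical risk $L_n$ inherits generalized self-concordance from its summands, and then to run a Newton-decrement localization argument in the local Hessian metric at $\theta_\star$. First I would record that $L_n = n^{-1}\sum_i \score(\cdot; Z_i)$ is $(R\,n^{\nu/2-1}, \nu)$-generalized self-concordant. Indeed, each $\score(\cdot; Z_i)$ is convex, so $H(\theta; Z_i) \preceq n\,H_n(\theta)$ and hence $\norm{v}_{H(\theta;Z_i)} \le \sqrt{n}\,\norm{v}_{H_n(\theta)}$; summing the defining inequality of \Cref{def:general_self_concordance} over $i$ and using $\sum_i u^\top H(\theta;Z_i)u = n\norm{u}_{H_n(\theta)}^2$ produces the bound with the inflated constant $R_n := R\,n^{\nu/2-1}$ (for $\nu=2$ the factor is $1$, recovering the earlier remark). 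Writing $r_0 := \norm{\grad_n(\theta_\star)}_{H_n^{-1}(\theta_\star)}$ for the Newton decrement of $L_n$ at $\theta_\star$, the goal becomes to show that the minimizer lives in the Dikin ball $B := \{\theta : \norm{\theta-\theta_\star}_{H_n(\theta_\star)} \le 4 r_0\}$ (the case $r_0=0$ being trivial, with $\theta_n=\theta_\star$).

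Next I would turn generalized self-concordance of $L_n$ into a multiplicative Hessian-stability bound along the segment from $\theta_\star$, as in \citet{sun2019generalized}: for $\theta \in B$ one has $H_n(\theta) \succeq (1-d_\nu)^{2/(\nu-2)} H_n(\theta_\star)$ when $2<\nu<3$, and $H_n(\theta)\succeq e^{-R_n\norm{\theta-\theta_\star}_2}H_n(\theta_\star)$ when $\nu=2$, where $d_\nu$ is proportional to $R_n\,\norm{\theta-\theta_\star}_2^{3-\nu}\,\norm{\theta-\theta_\star}_{H_n(\theta_\star)}^{\nu-2}$. The only factor not intrinsic to the $H_n(\theta_\star)$-metric is the Euclidean power $\norm{\theta-\theta_\star}_2^{3-\nu}$, which I bound by $\lambda_{\min}(H_n(\theta_\star))^{-(3-\nu)/2}\,\norm{\theta-\theta_\star}_{H_n(\theta_\star)}^{3-\nu}$; the two metric powers then merge into a single power, giving $d_\nu \le c_\nu\, R_n\, \lambda_{\min}(H_n(\theta_\star))^{-(3-\nu)/2}\, r_0$ on $B$. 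This is exactly the quantity that the hypothesis $r_0 \le C_\nu\,\lambda_{\min}(H_n(\theta_\star))^{(3-\nu)/2}/R_n$ keeps small, so choosing $C_\nu$ small enough forces $(1-d_\nu)^{2/(\nu-2)} \ge 1/2$, i.e. $H_n(\theta) \succeq \tfrac12 H_n(\theta_\star)$ for every $\theta\in B$.

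Armed with this lower Hessian bound on $B$, I would localize via a ``gradient points outward'' argument. Writing $\Delta := \theta-\theta_\star$ and $\grad_n(\theta) = \grad_n(\theta_\star) + \int_0^1 H_n(\theta_\star + t\Delta)\Delta\,\D t$, I pair with $\Delta$: Cauchy--Schwarz in the $H_n(\theta_\star)$-metric gives $\ip{\grad_n(\theta_\star), \Delta} \ge -r_0\norm{\Delta}_{H_n(\theta_\star)}$, while stability gives $\int_0^1 \Delta^\top H_n(\theta_\star+t\Delta)\Delta\,\D t \ge \tfrac12\norm{\Delta}_{H_n(\theta_\star)}^2$. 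Hence on the boundary $\norm{\Delta}_{H_n(\theta_\star)} = 4r_0$ one obtains $\ip{\grad_n(\theta), \Delta} \ge 8r_0^2 - 4r_0^2 = 4r_0^2 > 0$, so the gradient points strictly outward everywhere on $\partial B$. Since $L_n$ is continuous, its minimum over the compact set $B$ is attained, and the strictly outward gradient excludes a boundary minimizer; therefore the minimizer $\theta_n$ lies in the interior, satisfies $\grad_n(\theta_n)=0$, and obeys $\norm{\theta_n-\theta_\star}_{H_n(\theta_\star)} \le 4 r_0$, which is the asserted bound.

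Finally, because $L_n$ is convex (a sum of convex self-concordant losses), the interior stationary point $\theta_n$ is automatically a global minimizer, and since $H_n(\theta_n) \succeq \tfrac12 H_n(\theta_\star) \succ 0$ it is a \emph{strict} local minimum, which for a convex function is the unique global minimizer; this yields both existence and uniqueness. I expect the main obstacle to be the second step: converting the generalized-self-concordance inequality---which mixes the local Hessian norm with the Euclidean norm---into a clean, segment-uniform multiplicative Hessian bound, and in particular tracking how the Euclidean factor forces the exponent $\lambda_{\min}(H_n(\theta_\star))^{(3-\nu)/2}$ and how the $n^{\nu/2-1}$ inflation of the self-concordance constant propagates into the threshold on $r_0$. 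The remaining existence and uniqueness bookkeeping is then routine convex analysis.
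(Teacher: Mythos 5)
Your argument is correct and arrives at the same conclusion with the same constant $4$, but the core localization step runs through a different lemma than the paper's. Both proofs start identically: $L_n$ is $(Rn^{\nu/2-1},\nu)$-generalized self-concordant (the paper cites \citet[Prop.~1]{sun2019generalized}; your direct derivation via $H(\theta;Z_i)\preceq n H_n(\theta)$ is essentially that proof). From there the paper's \Cref{prop:self_concordance_local} works with the \emph{sublevel set} $\{L_n\le L_n(\theta_\star)\}$ and the zeroth-order comparison of \Cref{prop:function_value}: the lower bound $\bbar{\omega}_\nu(-d_\nu)\norm{\cdot}_{H_n(\theta_\star)}^2$ plus Cauchy--Schwarz and the monotonicity facts in \Cref{lem:monotonicity} and \Cref{cor:K_nu} show that the sublevel set is contained in the ball of radius $4r_0$ with $r_0:=\norm{S_n(\theta_\star)}_{H_n^{-1}(\theta_\star)}$, whence compactness gives existence and positive definiteness of the Hessian on that set gives uniqueness. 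You instead use the second-order comparison of \Cref{prop:hessian} to get $H_n(\theta)\succeq\tfrac12 H_n(\theta_\star)$ on the Dikin ball $\Theta_{4r_0}(\theta_\star)$ and then run an outward-gradient argument on its boundary; your constants check out ($8r_0^2-4r_0^2>0$ on the boundary, and $d_\nu\lesssim_\nu R n^{\nu/2-1}\lambda_{\min}(H_n(\theta_\star))^{-(3-\nu)/2}r_0$ after converting the Euclidean factor, which is exactly where the stated threshold comes from). The trade-off: the paper's sublevel set is automatically contained in $\dom(L_n)$, so no domain issue arises, whereas your route tacitly assumes the ball and every segment $[\theta_\star,\theta]$ for $\theta$ on its boundary lie in $\dom(L_n)$ --- a standard but genuinely needed fact (a Dikin ball on which $d_\nu<1$ cannot exit the domain of a closed generalized self-concordant function) that you should state if you write this up. In exchange, your Hessian-stability estimate on $\Theta_{4r_0}(\theta_\star)$ is reusable downstream (it is close to what \Cref{prop:emp_hess_est} needs), and you avoid the auxiliary functions $\bar\omega_\nu$, $\bbar{\omega}_\nu$ and their monotonicity analysis entirely.
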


The main tool used in the proof of \Cref{prop:localization} is a strong convexity type result for generalized self-concordant functions recalled in \Cref{sec:tools}.
In order to apply \Cref{prop:localization}, we need to control $\norm{\grad_n(\theta_\star)}_{H_n^{-1}(\theta_\star)}$.
This result is summarized in the following proposition.

\begin{proposition}\label{prop:score}
    Under \Cref{asmp:sub_gaussian,asmp:bernstein} with $r = 0$, it holds that, with probability at least $1 - \delta$,
    \begin{align*}
        \norm{S_n(\theta_\star)}_{H_n^{-1}(\theta_\star)}^2 \lesssim \frac{d_\star}n \log{(e/\delta)}
    \end{align*}
    whenever $n \gtrsim \log{(2d/\delta)}$.
\end{proposition}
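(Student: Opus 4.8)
The plan is to decouple the random quadratic form $\norm{S_n(\theta_\star)}_{H_n^{-1}(\theta_\star)}^2$ into two pieces that are governed by the two assumptions separately: the weighting metric $H_n(\theta_\star)$, controlled via its concentration around $H_\star$ (\Cref{asmp:bernstein}), and the empirical gradient $S_n(\theta_\star)$, controlled via its sub-Gaussianity (\Cref{asmp:sub_gaussian}).

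\textbf{Step 1 (replace the random metric by $H_\star$).} First I would apply the matrix Bernstein inequality to the i.i.d.\ standardized summands $H_\star^{-1/2} H(\theta_\star; Z_i) H_\star^{-1/2} - I_d$, which satisfy a Bernstein condition by \Cref{asmp:bernstein} with $r=0$. This yields, with probability at least $1-\delta$, the spectral bound $\norm{H_\star^{-1/2} H_n(\theta_\star) H_\star^{-1/2} - I_d}_2 \le c_n(\delta)$ with $c_n(\delta) \lesssim \sqrt{\log(2d/\delta)/n}$. Choosing the hidden constant in $n \gtrsim \log(2d/\delta)$ so that $c_n(\delta) \le 1/2$ gives $\tfrac12 H_\star \preceq H_n(\theta_\star) \preceq \tfrac32 H_\star$, hence $H_n^{-1}(\theta_\star) \preceq 2 H_\star^{-1}$, and therefore $\norm{S_n(\theta_\star)}_{H_n^{-1}(\theta_\star)}^2 \le 2\,\norm{S_n(\theta_\star)}_{H_\star^{-1}}^2$. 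This reduces the problem to a fixed deterministic metric.

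\textbf{Step 2 (bound the gradient quadratic form).} Write $V_i := G_\star^{-1/2} S(\theta_\star; Z_i)$; these are i.i.d., mean-zero (since $S(\theta_\star) = \nabla L(\theta_\star) = 0$), isotropic ($\Cov(V_i) = I_d$), and sub-Gaussian with parameter $K_1$ by \Cref{asmp:sub_gaussian}. Setting $A := H_\star^{-1/2} G_\star^{1/2}$, we have $H_\star^{-1/2} S_n(\theta_\star) = A \bar V_n$ with $\bar V_n := n^{-1}\sum_i V_i$, so that $\norm{S_n(\theta_\star)}_{H_\star^{-1}}^2 = \norm{A \bar V_n}_2^2$. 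Independent sub-Gaussian vectors add, so $\bar V_n$ is itself sub-Gaussian with MGF parameter $\lesssim K_1^2/n$. I would then invoke a tail bound for quadratic forms of sub-Gaussian vectors (Hanson--Wright / Hsu--Kakade--Zhang type), which gives with probability at least $1-\delta$
\begin{align*}
    \norm{A \bar V_n}_2^2 \lesssim \frac{K_1^2}{n}\Big( \Tr(A^\top A) + \norm{A^\top A}_2 \log(e/\delta) \Big).
\end{align*}
The crucial point is that $A A^\top = H_\star^{-1/2} G_\star H_\star^{-1/2}$, so $\Tr(A^\top A) = d_\star$ and $\norm{A^\top A}_2 = \norm{H_\star^{-1/2} G_\star H_\star^{-1/2}}_2 \le d_\star$; this is exactly how the \emph{effective} dimension, rather than the ambient dimension $d$, enters. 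Substituting and using $\log(e/\delta)\ge 1$ gives $\norm{S_n(\theta_\star)}_{H_\star^{-1}}^2 \lesssim (d_\star/n)\log(e/\delta)$. A union bound over the events of the two steps (rescaling $\delta$ by a constant) then combines into $\norm{S_n(\theta_\star)}_{H_n^{-1}(\theta_\star)}^2 \lesssim (d_\star/n)\log(e/\delta)$, valid whenever $n \gtrsim \log(2d/\delta)$.

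The main obstacle is obtaining $d_\star$ rather than $d$ in Step 2. A naive estimate of $\norm{A\bar V_n}_2 = \sup_{\norm{u}_2=1}\langle u, A\bar V_n\rangle$ via an $\epsilon$-net over the sphere would incur a $\sqrt{d}$ covering factor and produce the suboptimal rate $d/n$. Exploiting the quadratic-form structure through a trace-sensitive concentration inequality is what replaces $d$ by the trace $d_\star$ of the sandwich covariance, while keeping the operator-norm term $\norm{H_\star^{-1/2}G_\star H_\star^{-1/2}}_2$ separate is precisely what yields the sharper constant noted in the Remark following \Cref{thm:conf_set}.
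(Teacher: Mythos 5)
Your proposal is correct and follows essentially the same route as the paper: the paper's proof likewise splits into (i) a matrix Bernstein bound giving $\tfrac12 H_\star \preceq H_n(\theta_\star) \preceq \tfrac32 H_\star$ (Lemma~\ref{lem:hessian}) and (ii) a quadratic-form tail bound for the isotropic sub-Gaussian vector $\sqrt{n}\,G_\star^{-1/2}S_n(\theta_\star)$ with weight matrix $J = G_\star^{1/2}H_\star^{-1}G_\star^{1/2}/n$ (Lemma~\ref{lem:score} via Theorem~\ref{thm:isotropic_tail}), combined by a union bound. Your final simplification $\norm{\Omega_\star}_2 \le d_\star$ matches how the paper passes from its precise bound $d_\star/n + CK_1^2\log(e/\delta)\norm{\Omega_\star}_2/n$ to the simplified statement.
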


The proof of \Cref{prop:score} consists of two steps: (a) lower bound $H_n(\theta_\star)$ by $H_\star$ up to a constant using the Bernstein inequality and (b) upper bound $\norm{\grad_n(\theta_\star)}_{H^{-1}(\theta_\star)}$ using a concentration inequality for isotropic random vectors, where the tools are recalled in \Cref{sec:tools}.
Combining them implies that $\norm{\grad_n(\theta_\star)}_{H^{-1}(\theta_\star)}$ can be arbitrarily small and thus satisfies the requirement in \Cref{prop:localization} for sufficiently large $n$.
This not only proves the existence and uniqueness of the empirical risk minimizer $\theta_n$ but also provides an upper bound for $\norm{\theta_n - \theta_\star}_{H_n(\theta_\star)}$ through $\norm{\grad_n(\theta_\star)}_{H_n^{-1}(\theta_\star)}$.

In order to prove \Cref{thm:conf_set}, it remains to upper bound $H_n(\theta_n)$ by $H_\star$ up to a constant factor.
This can be achieved by the following result.
\begin{proposition}\label{prop:emp_hess_est}
    Under \Cref{asmp:self_concordance,asmp:bernstein} with $r = C_\nu \lambda_\star^{(\nu-3)/2} / R$, it holds that, with probability at least $1 - \delta$,
    \begin{align*}
        \frac1{2C_\nu} H_\star \preceq H_n(\theta) \preceq \frac32 C_\nu H_\star, \;\mbox{for all } \theta \in \Theta_{r}(\theta_\star),
    \end{align*}
    whenever $n \gtrsim \left\{ \log{(2d/\delta)} + d (\nu/2-1) \log{n}\right\}$.
\end{proposition}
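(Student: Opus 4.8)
The plan is to sandwich $H_n(\theta)$ between constant multiples of $H_\star$ in two stages: first compare the \emph{population} Hessian $H(\theta)$ to $H_\star$ using the generalized self-concordance of $\risk$, and then compare the \emph{empirical} Hessian $H_n(\theta)$ to $H(\theta)$ uniformly over the Dikin ellipsoid using the matrix Bernstein condition. Composing a two-sided constant-factor bound from each stage yields the claimed $\tfrac{1}{2C_\nu}H_\star \preceq H_n(\theta) \preceq \tfrac32 C_\nu H_\star$.

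For the population stage I would invoke the Hessian-stability consequence of $(R,\nu)$-generalized self-concordance recalled in \Cref{sec:tools}, with base point $\theta_\star$. For $\theta \in \Theta_r(\theta_\star)$ one has $\norm{\theta-\theta_\star}_{H_\star} < r$ and, since $H_\star \succeq \lambda_\star I_d$, also $\norm{\theta-\theta_\star}_2 \le \lambda_\star^{-1/2} r$; substituting these into the self-concordance radius gives a quantity bounded by $C_\nu R r \lambda_\star^{-(3-\nu)/2}$. The choice $r \asymp \lambda_\star^{(3-\nu)/2}/R$ is made precisely so that this radius is a constant strictly below $1$ depending only on $\nu$, which turns the stability bound into $\tfrac{1}{C_\nu}H_\star \preceq H(\theta)\preceq C_\nu H_\star$ for every $\theta\in\Theta_r(\theta_\star)$.

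For the empirical stage I would first establish a pointwise bound: for fixed $\theta$, \Cref{asmp:bernstein} supplies the Bernstein parameter $K_2$ and variance proxy $\sigma_H^2$ for the centered standardized summands $H(\theta)^{-1/2}H(\theta;Z_i)H(\theta)^{-1/2}-I_d$, so the matrix Bernstein inequality gives $\norm{H(\theta)^{-1/2}H_n(\theta)H(\theta)^{-1/2}-I_d}_2 \le c_n(\delta)$ with $c_n(\delta) \asymp \sqrt{\log(d/\delta)/n}$, hence $\tfrac12 H(\theta)\preceq H_n(\theta)\preceq \tfrac32 H(\theta)$ once $n \gtrsim \log(2d/\delta)$. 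To upgrade this to a statement holding simultaneously over all of $\Theta_r(\theta_\star)$ I would distinguish two regimes. When $\nu = 2$, the pseudo-self-concordant comparison $e^{-R\norm{\theta-\theta_\star}_2} H(\theta_\star;z) \preceq H(\theta;z)\preceq e^{R\norm{\theta-\theta_\star}_2}H(\theta_\star;z)$ holds per sample with a factor depending only on the Euclidean distance; summing over $i$ transfers a single reference-point concentration to the whole ellipsoid with a deterministic uniform constant, so no covering is needed and $n\gtrsim\log(2d/\delta)$ suffices. When $\nu\in(2,3)$ the analogous factor $(1-d_\nu)^{\pm 2/(\nu-2)}$ involves the \emph{per-sample} Hessian norm $\norm{\theta-\theta_\star}_{H(\theta_\star;Z_i)}^{\nu-2}$, which is not uniformly bounded, so I would instead place an $\epsilon$-net $\mathcal N$ over $\Theta_r(\theta_\star)$ in the $H_\star$-norm with $|\mathcal N|\le(1+2r/\epsilon)^d$, union-bound the pointwise estimate over $\mathcal N$ at a cost of $\log|\mathcal N|\asymp d\log(r/\epsilon)$ in the deviation, and interpolate to arbitrary $\theta$ through the per-sample self-concordance. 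Selecting $\epsilon$ of order $1/n$ makes the interpolation error a fixed fraction while contributing exactly the term $d(\nu/2-1)\log n$ to the required sample size.

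I expect the uniformization in the $\nu\in(2,3)$ regime to be the main obstacle. The difficulty is that the generalized self-concordance comparison is expressed relative to the random per-sample Hessians $H(\theta_\star;Z_i)$ rather than the population Hessian, so one cannot simply bootstrap from $\theta_\star$ as in the $\nu=2$ case; the covering resolution $\epsilon$ must be coupled to the matrix Bernstein deviation so that the accumulated interpolation error stays below a constant without inflating the variance proxy $\sigma_H^2$. Tracking the exact $\nu$-dependence of this trade-off---so that the extra sample cost is precisely $d(\nu/2-1)\log n$ and degenerates to zero at $\nu=2$---is the delicate bookkeeping that closes the argument; combining it with the population stage then yields the two-sided bound with the stated constants.
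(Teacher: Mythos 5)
Your two-stage architecture (population comparison $H(\theta)\leftrightarrow H_\star$ via the generalized self-concordance of $\risk$, then empirical comparison $H_n(\theta)\leftrightarrow H(\theta)$ via matrix Bernstein plus a union bound over a net) is the same skeleton as the paper's proof, and your $\nu=2$ shortcut --- transferring concentration from $\theta_\star$ to the whole ellipsoid deterministically via the per-sample bound $e^{\pm R\norm{\theta-\theta_\star}_2}$ --- is correct and a legitimately more elementary route for that case. The problem is the case $\nu\in(2,3)$, where you correctly identify the obstacle (the per-sample comparison factor involves $\norm{\theta-\pi(\theta)}_{H(\pi(\theta);Z_i)}^{\nu-2}$, which is not uniformly controlled) but then leave it unresolved, describing the fix only as ``delicate bookkeeping.'' That bookkeeping is the actual content of the step. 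The paper's device is to abandon per-sample self-concordance entirely for the interpolation and instead use that the \emph{aggregate} empirical risk $L_n$ is $(n^{\nu/2-1}R,\nu)$-generalized self-concordant (\Cref{lem:emp_risk_self_concordance}); its Hessian-comparison distance $d_{n,\nu}(\pi(\theta),\theta)$ is then measured in the aggregate norm $\norm{\cdot}_{H_n(\pi(\theta))}$ at the net point, which is exactly the quantity already controlled on the union-bounded Bernstein event. Without passing to the self-concordance of the sum, your interpolation step does not close.

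There is also a quantitative error in your net resolution. The inflated constant $n^{\nu/2-1}$ in the self-concordance of $L_n$ forces the covering radius to be $\tau\asymp n^{-(\nu/2-1)}$ (so that the two factors cancel and $d_{n,\nu}(\pi(\theta),\theta)$ stays below the critical threshold $K_\nu<1$), which is what produces the cardinality cost $\log\abs{\calN_\tau}\asymp d(\nu/2-1)\log n$ and degenerates to a constant at $\nu=2$. Your choice $\epsilon\asymp 1/n$ instead yields $\log\abs{\calN}\asymp d\log n$ uniformly in $\nu$, so it would not recover the stated sample-size condition $n\gtrsim \log(2d/\delta)+d(\nu/2-1)\log n$ for $\nu$ near $2$, and it is not ``exactly the term $d(\nu/2-1)\log n$'' as you claim. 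Fixing both points --- interpolating through \Cref{lem:emp_risk_self_concordance} and \Cref{prop:hessian} applied to $L_n$, with $\tau\asymp n^{-(\nu/2-1)}$ --- would align your argument with the paper's Steps 1--3 and deliver the stated constants.
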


Finally, \Cref{cor:lr_conf_set} follows from \Cref{thm:conf_set} and the Taylor expansion: there exists $\bar \theta_n \in \mbox{Conv}\{\theta_n, \theta_\star\}$ such that
\begin{align*}
    2[L_n(\theta_\star) - L_n(\theta_n)] = \norm{\theta_n - \theta_\star}_{H_n(\bar \theta_n)},
\end{align*}
where we have used $\nabla L_n(\theta_n) = 0$.

\subsection{Approximating the effective dimension}

One downside of \Cref{thm:conf_set,cor:lr_conf_set} is that $d_\star$ depends on the unknown data distribution.
Alternatively, we use the following empirical counterpart
\begin{align*}
    d_n := \Tr\left(H_n(\theta_n)^{-1/2} G_n(\theta_n) H_n(\theta_n)^{-1/2} \right).
\end{align*}
The next result implies that we do not lose much if we replace $d_\star$ by $d_n$.
This result is novel and of independent interest since one also needs to estimate $d_\star$ in order to construct asymptotic confidence sets under model misspecification.

\begin{customasmp}{2'}\label{asmp:subG_local}
    There exist constants $r, K_1 > 0$ such that, for any $\theta \in \Theta_r(\theta_\star)$, we have $\norm{G(\theta)^{-1/2} S(\theta; Z)}_{\psi_2} \le K_1$.
\end{customasmp}

\begin{assumption}\label{asmp:lip}
    There exists $r > 0$ such that $M := \Expect[M(Z)] < \infty$, where $M(z)$ is defined as
    \begin{align*}
        \sup_{\theta_1 \neq \theta_2 \in \Theta_r(\theta_\star)} \frac{\norm{G_\star^{-1/2} [G(\theta_1; z) - G(\theta_2; z)] G_\star^{-1/2}}_2}{\norm{\theta_1 - \theta_2}_{H_\star}}.
    \end{align*}
\end{assumption}

\myparagraph{Remark}
\Cref{asmp:lip} is a Lipschitz-type condition for $G(\theta; z)$. This assumption was previously used by \citep[Assumption 3]{mei2018landscape} to analyze non-convex risk landscapes. 

\begin{proposition}\label{prop:d_n}
    Let $\nu \in [2, 3)$.
    Under Asms.~\ref{asmp:self_concordance}, \ref{asmp:subG_local}, \ref{asmp:bernstein} and \ref{asmp:lip} with $r = C_\nu \lambda_\star^{(3-\nu)/2}/R$, it holds that
    \begin{align*}
        \frac1{C_\nu} d_\star \le d_n \le C_\nu d_\star,
    \end{align*}
    with probability at least $1 - \delta$,
    whenever $n$ is large enough (see \Cref{sub:appendix:consist_dn} for the precise condition).
\end{proposition}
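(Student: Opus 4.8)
The plan is to reduce the two-sided comparison $d_n \asymp d_\star$ to two perturbation estimates—one for the Hessian, one for the gradient autocorrelation—and to control each using results already in hand. Throughout I would work on the event that $\theta_n$ exists, is unique, and lies in $\Theta_r(\theta_\star)$, which holds with probability at least $1-\delta$ for $n$ large enough by \Cref{thm:risk_bound_generalized} and \Cref{prop:localization}; on this event both \Cref{prop:emp_hess_est} and \Cref{asmp:lip} apply at $\theta=\theta_n$. Writing $d_n = \Tr(H_n(\theta_n)^{-1} G_n(\theta_n))$ and $d_\star = \Tr(H_\star^{-1} G_\star)$, the first step is to dispose of the Hessian. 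Since $G_n(\theta_n) = n^{-1}\sum_i \grad(\theta_n; Z_i)\grad(\theta_n; Z_i)^\top \succeq 0$ and $\Tr(MB)\le \Tr(M'B)$ whenever $M\preceq M'$ and $B\succeq 0$, the sandwich $\tfrac{1}{2C_\nu} H_\star \preceq H_n(\theta_n) \preceq \tfrac{3}{2}C_\nu H_\star$ from \Cref{prop:emp_hess_est} transfers to the inverses and yields $\tfrac{2}{3C_\nu}\Tr(H_\star^{-1}G_n(\theta_n)) \le d_n \le 2C_\nu\,\Tr(H_\star^{-1}G_n(\theta_n))$. It therefore suffices to show $\Tr(H_\star^{-1}G_n(\theta_n))$ is within a constant factor of $d_\star$.

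The second step linearizes the $G$ term around $G_\star$. Setting $E_n := G_\star^{-1/2}[G_n(\theta_n) - G_\star]G_\star^{-1/2}$ and $\Sigma := G_\star^{1/2}H_\star^{-1}G_\star^{1/2}$ so that $\Tr\Sigma = d_\star$, one has $\Tr(H_\star^{-1}G_n(\theta_n)) = d_\star + \Tr(\Sigma E_n)$, and since $\Sigma\succeq 0$ the elementary bound $\abs{\Tr(\Sigma E_n)} \le \norm{E_n}_2\,\Tr\Sigma$ gives $(1-\norm{E_n}_2)\,d_\star \le \Tr(H_\star^{-1}G_n(\theta_n)) \le (1+\norm{E_n}_2)\,d_\star$. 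The whole proposition thus reduces to showing $\norm{E_n}_2 \le \tfrac12$ with high probability for $n$ large enough.

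To bound $\norm{E_n}_2$ I would split at the empirical autocorrelation evaluated at the fixed point $\theta_\star$:
\begin{align*}
\norm{E_n}_2 \le \norm{G_\star^{-1/2}[G_n(\theta_n) - G_n(\theta_\star)]G_\star^{-1/2}}_2 + \norm{G_\star^{-1/2}[G_n(\theta_\star) - G_\star]G_\star^{-1/2}}_2.
\end{align*}
For the first (Lipschitz) term, \Cref{asmp:lip} bounds each summand by $M(Z_i)\norm{\theta_n - \theta_\star}_{H_\star}$, so by the triangle inequality it is at most $M_n\,\norm{\theta_n - \theta_\star}_{H_\star}$ with $M_n := n^{-1}\sum_i M(Z_i)$; since $\Expect[M_n] = M < \infty$, Markov's inequality controls $M_n$ with high probability, while \Cref{thm:risk_bound_generalized} gives $\norm{\theta_n - \theta_\star}_{H_\star} \lesssim \sqrt{\log(e/\delta)\,d_\star/n}\to 0$, so this term vanishes for $n$ large. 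For the second term, writing $V_i := G_\star^{-1/2}\grad(\theta_\star; Z_i)$ we have $G_\star^{-1/2}G_n(\theta_\star)G_\star^{-1/2} = n^{-1}\sum_i V_iV_i^\top$ with $\Expect[V_iV_i^\top] = I_d$ and $V_i$ isotropic sub-Gaussian by \Cref{asmp:sub_gaussian} (subsumed by \Cref{asmp:subG_local}); the sample-covariance concentration for isotropic sub-Gaussian vectors \citep[Thm.~5.39]{vershynin2010introduction} bounds it, and it too vanishes for $n$ large. Combining with the Hessian sandwich gives $\tfrac{1}{3C_\nu}d_\star \le d_n \le 3C_\nu d_\star$, i.e.\ the claim with a relabeled constant.

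The main obstacle is the first (Lipschitz) term, since it couples the empirical process with the random, data-dependent estimator $\theta_n$. The key is that \Cref{asmp:lip} furnishes a per-sample Lipschitz constant $M(z)$ defined as a supremum over the entire ball $\Theta_r(\theta_\star)$, so that $M(Z_i)\norm{\theta_n - \theta_\star}_{H_\star}$ is a valid deterministic bound on the event $\{\theta_n \in \Theta_r(\theta_\star)\}$ and cleanly decouples the randomness of $\theta_n$—captured by the localization rate $\sqrt{d_\star/n}$—from that of the $M(Z_i)$, which is only first-moment integrable and so must be handled by Markov at the cost of a $\delta$-dependent but $n$-free factor. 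Absorbing this factor together with the $\sqrt{d/n}$ rate of the second term into the threshold on $n$ is precisely what produces the ``$n$ large enough'' condition deferred to \Cref{sub:appendix:consist_dn}.
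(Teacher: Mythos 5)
Your proposal is correct, but it reaches the key estimate---that $G_n(\theta_n)$ is a constant-factor perturbation of $G_\star$ in the $G_\star^{-1}$-weighted norm---by a genuinely different route than the paper. The paper splits $G_n(\theta_n)-G_\star$ at the \emph{population} matrix $G(\theta_n)$: the piece $G(\theta_n)-G_\star$ is handled by the population Lipschitz bound (Jensen applied to \Cref{asmp:lip}), while the piece $G_n(\theta_n)-G(\theta_n)$ is bounded by $\sup_{\theta\in\Theta_{r_n}(\theta_\star)}\norm{G_\star^{-1/2}[G_n(\theta)-G(\theta)]G_\star^{-1/2}}_2$ and controlled uniformly via a $\tau$-covering of the Dikin ball, a union bound over covering points with a sub-exponential Bernstein inequality, plus Markov for the empirical Lipschitz gap to the nearest net point. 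You instead split at the \emph{empirical} matrix $G_n(\theta_\star)$: the Lipschitz piece $G_n(\theta_n)-G_n(\theta_\star)$ is bounded deterministically (on the localization event) by $M_n\norm{\theta_n-\theta_\star}_{H_\star}$ with $M_n:=n^{-1}\sum_i M(Z_i)$ controlled by Markov, and the remaining piece $G_n(\theta_\star)-G_\star$ is a sample second-moment matrix of the isotropic sub-Gaussian vectors $G_\star^{-1/2}\grad(\theta_\star;Z_i)$, handled at the single fixed point $\theta_\star$. This avoids the covering of $\Theta_{r_n}(\theta_\star)$ entirely for the $G$-term (the covering survives only inside \Cref{prop:emp_hess_est}, which both arguments share), replacing the $d\log(r_n/\tau)$ entropy term with a plain $d+\log(1/\delta)$; the price is that your dominant error term scales as $Mr_n/\delta$ rather than the paper's $Mr_n$, since you apply Markov to $M_n$ at the full displacement $\norm{\theta_n-\theta_\star}_{H_\star}$ rather than only at the covering resolution $\tau=\delta/(Mn)$. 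For fixed $\delta$ both vanish as $n\to\infty$, so the conclusion and the structure of the ``$n$ large enough'' threshold are unaffected; your reduction via $\abs{\Tr(\Sigma E_n)}\le\norm{E_n}_2\Tr(\Sigma)$ is also a cleaner way to pass from the operator-norm perturbation to the trace than the paper's two-sided semidefinite sandwich, though the two are equivalent.
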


\myparagraph{Remark}
The precise version of $\Cref{prop:d_n}$ in \Cref{sub:appendix:consist_dn} implies that $d_n$ is a consistent estimator of $d$.

With \Cref{prop:d_n} at hand, we can obtain finite-sample confidence sets involving $d_n$, which can be computed from data.
We illustrate it with the Wald confidence set.
\begin{corollary}\label{cor:wald_conf_set}
    Suppose the same assumptions in \Cref{prop:d_n} hold true.
    Let $\calC_{\text{Wald}, n}'(\delta)$ be
    \begin{align*}
        \left\{ \theta \in \Theta: \norm{\theta - \theta_\star}_{H_n(\theta_n)}^2 \le C_{K_1, \nu} \log{(e/\delta)} \frac{d_n}{n} \right\}.
    \end{align*}
    Then we have $\Prob(\theta_\star \in \calC_{\text{Wald}, n}'(\delta)) \ge 1 - \delta$ whenever $n$ satisfies the same condition as in \Cref{prop:d_n}.
\end{corollary}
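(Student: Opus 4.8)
The plan is to obtain \Cref{cor:wald_conf_set} as a direct composition of \Cref{thm:conf_set} and \Cref{prop:d_n}, glued together by a union bound. \Cref{thm:conf_set} already certifies that the oracle Wald set built from the true effective dimension $d_\star$ covers $\theta_\star$ with high probability, while \Cref{prop:d_n} shows that the data-driven $d_n$ matches $d_\star$ up to a $\nu$-dependent multiplicative constant. The task thus reduces to transferring the coverage guarantee from the $d_\star$-set to the $d_n$-set without eroding the confidence level; deciding whether $\theta_\star \in \calC_{\text{Wald}, n}'(\delta)$ amounts to controlling $\norm{\theta_\star - \theta_n}_{H_n(\theta_n)}^2$.

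First I would invoke \Cref{thm:conf_set} at level $\delta/2$. Since the hypotheses of \Cref{prop:d_n} subsume those of \Cref{thm:conf_set}, this application is legitimate, and it yields, with probability at least $1 - \delta/2$,
\begin{align*}
    \norm{\theta_\star - \theta_n}_{H_n(\theta_n)}^2 \le C_{K_1, \nu} \frac{d_\star}{n} \log{\frac{2e}{\delta}}.
\end{align*}
Next I would invoke \Cref{prop:d_n}, also at level $\delta/2$, to obtain the one-sided bound $d_\star \le C_\nu d_n$ with probability at least $1 - \delta/2$. The accompanying sample-size requirement changes only through the replacement of $\log(1/\delta)$ by $\log(2/\delta)$, which is absorbed into the implicit constants, so the condition stated in \Cref{prop:d_n} still suffices.

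By a union bound, both events hold simultaneously with probability at least $1 - \delta$. On their intersection,
\begin{align*}
    \norm{\theta_\star - \theta_n}_{H_n(\theta_n)}^2 \le C_{K_1, \nu} \frac{d_\star}{n} \log{\frac{2e}{\delta}} \le C_{K_1, \nu} C_\nu \frac{d_n}{n} \log{\frac{2e}{\delta}},
\end{align*}
so redefining the constant $C_{K_1, \nu}$ in the definition of $\calC_{\text{Wald}, n}'(\delta)$ to absorb the extra factor $C_\nu$ (together with the harmless $\log 2$) places $\theta_\star$ inside $\calC_{\text{Wald}, n}'(\delta)$ on this event. This delivers $\Prob(\theta_\star \in \calC_{\text{Wald}, n}'(\delta)) \ge 1 - \delta$.

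Because the argument merely composes two established results, there is no genuine mathematical obstacle. The only points demanding care are bookkeeping ones: splitting the probability budget via the halved level $\delta/2$ (which only inflates constants and the $\log(1/\delta)$ terms), and checking that it is precisely the lower bound $d_\star \le C_\nu d_n$---i.e.\ the guarantee that $d_n$ is not too small---that is needed to \emph{enlarge} rather than shrink the oracle set, so that a point certified by the $d_\star$-set remains certified by the $d_n$-set.
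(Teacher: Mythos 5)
Your proposal is correct and is exactly the route the paper intends: the corollary is presented as an immediate consequence of \Cref{thm:conf_set} together with \Cref{prop:d_n}, combining the oracle coverage bound with the two-sided comparison $d_\star \asymp d_n$ via a union bound and absorbing the extra $C_\nu$ and $\log 2$ factors into the constant. You also correctly flag the one non-trivial bookkeeping point, namely that it is the direction $d_\star \le C_\nu\, d_n$ that is needed so the data-driven set only enlarges the oracle one.
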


\subsection{Discussion}
\label{sub:discussion}

\myparagraph{Fisher information and model misspecification}
When the model is well-specified, the autocorrelation matrix $G(\theta)$ coincides with the well-known Fisher information $\mathcal{I}(\theta) := \Expect_{Z \sim P_\theta}[S(\theta; Z)S(\theta; Z)^\top]$ at $\theta_\star$.
The Fisher information plays a central role in mathematical statistics and, in particular, M-estimation; see \citep{pennington2018spectrum,kunstner2019limitations,ash2021gone,soen2021variance} for recent developments in this line of research.
It quantifies the amount of information a random variable carries about the model parameter.
Under a well-specified model, it also coincides with the Hessian matrix $H(\theta)$ at the optimum which captures the local curvature of the population risk.
When the model is misspecified, the Fisher information deviates from the Hessian matrix.
In the asymptotic regime, this discrepancy is reflected in the limiting covariance of the weighted M-estimator which admits a sandwich form $H_\star^{-1/2} G_\star H_\star^{-1/2}$; see, e.g., \cite[Sec.~4]{huber1967under}.

\myparagraph{Effective dimension}
The counterpart of the sandwich covariance in the non-asymptotic regime is the effective dimension $d_\star$; see, e.g., \citep{spokoiny2017penalized,ostrovskii2021finite}.
Our bounds also enjoy the same merit---its dimension dependency is via the effective dimension.
When the model is well-specified, the effective dimension reduces to $d$, recovering the same rate of convergence $O(d/n)$ as in classical linear regression; see, e.g., \cite[Prop.~3.5]{bach2021learning}.
When the model is misspecified, the effective dimension provides a characterization of the problem complexity which is adapted to both the data distribution and the loss function via the matrix $H_\star^{-1/2} G_\star H_\star^{-1/2}$.
To gain a better understanding of the effective dimension $d_\star$, we summarize it in \Cref{tab:decay} in \Cref{sec:proofs} under different regimes of eigendecay, assuming that $G_\star$ and $H_\star$ share the same eigenvectors.
It is clear that, when the spectrum of $G_\star$ decays faster than the one of $H_\star$, the dimension dependency can be better than $O(d)$.
In fact, it can be as good as $O(1)$ when the spectrum of $G_\star$ and $H_\star$ decay exponentially and polynomially, respectively.

\myparagraph{Comparison to classical asymptotic theory}
Classical asymptotic theory of M-estimation is usually based on two assumptions: (a) the model is well-specified and (b) the sample size $n$ is much larger than the parameter dimension $d$.
These assumptions prevent it from being applicable to many real applications where the parametric family is only an approximation to the unknown data distribution and the data is of high dimension involving a large number of parameters.
On the contrary, our results do not require a well-specified model, and the dimension dependency is replaced by the effective dimension $d_\star$ which captures the complexity of the parameter space.
Moreover, they are of non-asymptotic nature---they hold true for any $n$ as long as it exceeds some constant factor of $d_\star$.
This allows the number of parameters to potentially grow with the same size.

\myparagraph{Comparison to recent non-asymptotic theory}
Recently, \citet{spokoiny2012parametric} achieved a breakthrough in finite-sample analysis of parametric M-estimation.
Although fully general, their results require strong global assumptions on the deviation of the empirical risk process and are built upon advanced tools from empirical process theory.
Restricting ourselves to generalized self-concordant losses, we are able to provide a more transparent analysis with neater assumptions only in a neighborhood of the optimum parameter $\theta_\star$.
Moreover, our results maintain some generality, covering several interesting examples in statistical machine learning as provided in \Cref{sub:examples}.

\citet{ostrovskii2021finite} also considered self-concordant losses for M-estimation.
However, their results are limited to generalized linear models whose loss is (pseudo) self-concordant and admits the form $\ell(\theta; Z) := \ell(Y, \theta^\top X)$.
While sharing the same rate $O(d_\star / n)$, our results are more general than theirs in two aspects.
First, the loss need not be of the form $\ell(Y, \theta^\top X)$, encompassing the score matching loss in \Cref{ex:score_matching} below.
Second, we go beyond pseudo self-concordance via the notion of generalized self-concordance.
Moreover, they focus on bounding the excess risk rather than providing confidence sets, and they do not study the estimation of $d_\star$.

Pseudo self-concordant losses have been considered for semi-parametric models \citep{liu2022orthogonal}.
However, they focus on bounding excess risk and require a localization assumption on $\theta_n$. Here we prove the localization result in \Cref{prop:localization} and we focus on confidence sets.

\myparagraph{Regularization}
Our results can also be applied to regularized empirical risk minimization by including the regularization term in the loss function.
Let $\theta_{n}^\lambda$ and $\theta_{\star}^\lambda$ be the minimizers of the \emph{regularized} empirical and population risk, respectively.
Let $d_\star^\lambda := \Tr\big((H_\star^\lambda)^{-1/2} G_\star^{\lambda} (H_\star^\lambda)^{-1/2}\big)$ where $H_\star^{\lambda}$ and $G_\star^{\lambda}$ are the regularized Hessian and the autocorrelation matrix of the regularized gradient at $\theta_\star^\lambda$, respectively.
Then our results characterize the concentration of $\theta_{n}^\lambda$ around $\theta_{\star}^\lambda$:
\begin{align*}
    \norm{\theta_n^{\lambda} - \theta_\star^\lambda}_{H_\star^\lambda}^2 \le O(d_\star^\lambda / n).
\end{align*}
This result coincides with \citet[Thm.~2.1]{spokoiny2017penalized}.
If the goal is to estimate the unregularized population risk minimizer $\theta_\star$, then we need to pay an additional error $\norm{\theta_\star^\lambda - \theta_\star}_{H_\star^\lambda}^2$ which is referred to as the modeling bias \citep[Sec.~2.5]{spokoiny2017penalized}.
One can invoke a so-called \emph{source condition} to bound the modeling bias and a \emph{capacity condition} to bound $d_\star^\lambda$.
An optimal value of $\lambda$ can be obtained by balancing between these two terms \cite[see, e.g.,][]{marteau2019beyond}.

For instance, let $Z := (X, Y)$ where $X \in \reals^d$ with $\Expect[XX^\top] = I_d$ and $Y \in \reals$.
Consider the regularized squared loss
$ \score^\lambda(\theta; z) := 1/2\, (y - \theta^\top x)^2 + 1/2\, \theta^\top U \theta$
where $U = \diag\{\mu_1, \dots, \mu_d\}$.
The regularized effective dimension is then~\citep[Sec.~2.1]{spokoiny2017penalized} of order 
$ O\big( \sum_{k=1}^d 1/(1 + \mu_k) \big)$
which can be much smaller than $d$ if $\{\mu_k\}$ is increasing.

\section{EXAMPLES AND APPLICATIONS}
\label{sec:application}
We give several examples whose loss function is generalized self-concordant so that our results can be applied.
We also provide finite-sample analysis for Rao's score test, the likelihood ratio test, and the Wald test in goodness-of-fit testing.
All the proofs and derivations are deferred to \Cref{sec:example}.

\subsection{Examples}
\label{sub:examples}

\begin{example}[Generalized linear models]\label{ex:glm}
    Let $Z := (X, Y)$ be a pair of input and output, where $X \in \calX \subset \reals^d$ and $Y \in \calY \subset \reals$.
    Let $t: \calX \times \calY \rightarrow \reals^d$ and $\mu$ be a measure on $\calY$.
    Consider the statistical model
    \begin{align*}
        p_\theta(y \mid x) \sim \frac{\exp(\theta^\top t(x, y))}{\int \exp(\theta^\top t(x, \bar y)) \D \mu(\bar y)} \D \mu(y)
    \end{align*}
    with $\norm{t(X, Y)}_2 \le_{a.s.} M$.
    It induces the loss function
    \begin{align*}
        \score(\theta; z) := -\theta^\top t(x, y) + \log{\int \exp(\theta^\top t(x, \bar y)) \D \mu(\bar y)},
    \end{align*}
    which is generalized self-concordant for $\nu = 2$ and $R = 2M$.
    Moreover, this model satisfies \Cref{asmp:sub_gaussian,asmp:bernstein,asmp:lip} and \ref{asmp:subG_local}.
\end{example}

\begin{example}[Score matching with exponential families]\label{ex:score_matching}
    Assume that $\bbZ = \reals^p$.
    Consider an exponential family on $\reals^d$ with densities
    \begin{align*}
        \log{p_\theta(z)} = \theta^\top t(z) + h(z) - \Lambda(\theta).
    \end{align*}
    The non-normalized density $q_\theta$ then reads $\log{q_\theta(z)} = \theta^\top t(z) + h(z)$.
    As a result, the score matching loss becomes
    \begin{align*}
        \score(\theta; z) = \frac12 \theta^\top A(z) \theta - b(z)^\top \theta + c(z) + \text{const},
    \end{align*}
    where $A(z) := \sum_{k=1}^p \frac{\partial t(z)}{\partial z_k} \big(\frac{\partial t(z)}{\partial z_k}\big)^\top$ is positive semi-definite, $b(z) := \sum_{k=1}^p \left[ \frac{\partial^2 t(z)}{\partial z_k^2} + \frac{\partial h(z)}{\partial z_k} \frac{\partial t(z)}{\partial z_k} \right]$, and $c(z) := \sum_{k=1}^p \left[ \frac{\partial^2 h(z)}{\partial z_k^2} + \big(\frac{\partial h(z)}{\partial z_k}\big)^2 \right]$.
    Therefore, the score matching loss $\score(\theta; z)$ is convex.
    Moreover, since the third derivatives of $\ell(\cdot; z)$ is zero, the score matching loss is generalized self-concordant for all $\nu \ge 2$ and $R \ge 0$.
\end{example}

\subsection{Rao's Score Test and Its Relatives}
\label{sub:goodness}

We discuss how our results can be applied to analyze three classical goodness-of-fit tests.
In this subsection, we will assume that the model is well-specified.
Due to \Cref{asmp:proper_loss}, we will use $\theta_\star$ to denote the true parameter of $\Prob$ and reserve $\theta_0$ for the parameter under the null hypothesis.

Given a subset $\Theta_0 \subset \Theta$, a goodness-of-fit testing problem is to test the hypotheses
\begin{align*}
    \hnull: \theta_\star \in \Theta_0 \leftrightarrow \halt: \theta_\star \notin \Theta_0.
\end{align*}
We focus on a simple null hypothesis where $\Theta_0 := \{\theta_0\}$ is a singleton.
A statistical test consists of a test statistic $T := T(Z_1, \dots, Z_n)$ and a prescribed critical value $t$, and we reject the null hypothesis if $T > t$.
Its performance is quantified by the \emph{type I error rate} $\Prob(T > t \mid \hnull)$ and \emph{statistical power} $\Prob(T > t \mid \halt)$.
Classical goodness-of-fit tests include Rao's score test, the likelihood ratio test (LRT), and the Wald test.
Their test statistics are $\rao := \norm{\grad_n(\theta_0)}_{H_n^{-1}(\theta_0)}^2$, $\lr := 2[\score_n(\theta_0) - \score_n(\theta_n)]$, and $\wald := \norm{\theta_n - \theta_0}_{H_n(\theta_n)}^2$,
respectively.

Our approach can be applied to analyze the type I error rate of these tests as summarized in the following proposition.
\begin{proposition}[Type I error rate]\label{prop:typeI}
    Suppose that \Cref{asmp:sub_gaussian,asmp:bernstein} with $r = 0$ hold true.
    Under $\hnull$, we have, with probability at least $1 - \delta$,
    \begin{align*}
        \rao \lesssim \log{(e/\delta)} \frac{d}n
    \end{align*}
    whenever $n \gtrsim \log{(2d/\delta)}$.
    Furthermore, if \Cref{asmp:self_concordance,asmp:sub_gaussian,asmp:bernstein} with $r = C_\nu \lambda_\star^{(\nu-3)/2}/R$ hold true, we have, with probability at least $1 - \delta$,
    \begin{align*}
        \lr, \wald \lesssim \log{(e/\delta)} \frac{d}{n}
    \end{align*}
    whenever $n$ satisfies \eqref{eq:n_large_enough}.
\end{proposition}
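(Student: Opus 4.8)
The plan is to recognize that, under $\hnull$, \Cref{prop:typeI} is a direct specialization of the results of \Cref{sec:main_results} to a well-specified model. The key preliminary observation is that under $\hnull$ the null parameter coincides with the population risk minimizer, $\theta_0 = \theta_\star$ (by \Cref{asmp:proper_loss} together with well-specification), and that the information matrix identity $H_\star = G_\star$ holds. Consequently the effective dimension collapses, $d_\star = \Tr(H_\star^{-1/2} G_\star H_\star^{-1/2}) = \Tr(I_d) = d$, so every occurrence of $d_\star$ in the earlier bounds may be read as $d$.

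For the Rao statistic I would apply \Cref{prop:score} verbatim. Since $\theta_0 = \theta_\star$, we have $\rao = \norm{\grad_n(\theta_0)}_{H_n^{-1}(\theta_0)}^2 = \norm{\grad_n(\theta_\star)}_{H_n^{-1}(\theta_\star)}^2$, and \Cref{prop:score} --- which requires only \Cref{asmp:sub_gaussian,asmp:bernstein} with $r = 0$ and $n \gtrsim \log(2d/\delta)$ --- yields $\rao \lesssim (d_\star / n)\log(e/\delta) = (d/n)\log(e/\delta)$ on the same high-probability event. This settles the first display together with its sample-size requirement without further work.

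For the Wald and LR statistics I would invoke the finite-sample confidence sets directly. Under $\hnull$, $\wald = \norm{\theta_n - \theta_0}_{H_n(\theta_n)}^2 = \norm{\theta_n - \theta_\star}_{H_n(\theta_n)}^2$, and the coverage guarantee $\theta_\star \in \calC_{\text{Wald},n}(\delta)$ of \Cref{thm:conf_set} is precisely the inequality $\wald \le C_{K_1,\nu}(d_\star/n)\log(e/\delta) = C_{K_1,\nu}(d/n)\log(e/\delta)$, valid whenever \eqref{eq:n_large_enough} holds. Likewise $\lr = 2[\score_n(\theta_0) - \score_n(\theta_n)] = 2[L_n(\theta_\star) - L_n(\theta_n)]$, so $\theta_\star \in \calC_{\text{LR},n}(\delta)$ from \Cref{cor:lr_conf_set} reads $\lr \le C_{K_1,\nu}(d/n)\log(e/\delta)$ under the same condition. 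As a self-contained alternative for the LR bound I would use the Taylor identity $\lr = \norm{\theta_n - \theta_\star}_{H_n(\bar\theta_n)}^2$ for some $\bar\theta_n \in \mathrm{Conv}\{\theta_n,\theta_\star\}$ (using $\nabla L_n(\theta_n) = 0$), then bound $H_n(\bar\theta_n) \preceq \tfrac32 C_\nu H_\star$ by \Cref{prop:emp_hess_est} and $\norm{\theta_n - \theta_\star}_{H_\star}^2 \lesssim (d/n)\log(e/\delta)$ by \Cref{thm:risk_bound_generalized}.

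Because each claim is inherited from an already-established result, there is no substantive obstacle; the only care needed is bookkeeping. Concretely, one must ensure the underlying high-probability events hold simultaneously and, for the Taylor route, that localization places $\theta_n$ --- hence the entire segment $\mathrm{Conv}\{\theta_n,\theta_\star\}$ containing $\bar\theta_n$ --- inside $\Theta_r(\theta_\star)$, so that \Cref{prop:emp_hess_est} applies; this is exactly what \eqref{eq:n_large_enough} (fed through \Cref{prop:localization} and \Cref{prop:score}) secures. The single conceptual step where well-specification is genuinely used is the reduction $d_\star = d$; everything else is a transcription of the general, possibly-misspecified analysis.
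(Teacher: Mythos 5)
Your proposal is correct and follows essentially the same route as the paper: Rao's statistic is handled by applying \Cref{prop:score} at $\theta_0 = \theta_\star$, the Wald statistic follows directly from the coverage guarantee of \Cref{thm:conf_set}, and the LR statistic is reduced via the Taylor identity $\lr = \norm{\theta_0 - \theta_n}_{H_n(\bar\theta_n)}^2$ (using $\nabla L_n(\theta_n)=0$) to the same argument, with the well-specified identity $d_\star = d$ invoked exactly as in the paper. Your added care about keeping $\mathrm{Conv}\{\theta_n,\theta_\star\}$ inside $\Theta_r(\theta_\star)$ is the same bookkeeping the paper performs implicitly when it says ``following a similar argument as \Cref{thm:conf_set}.''
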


This result implies that the three test statistics all scale as $O(d / n)$ under the null hypothesis.
Consequently, for a fixed significance level $\alpha \in (0, 1)$, we can choose the critical value $t = t_n(\alpha) = O(d/n)$ so that their type I error rates are below $\alpha$.
With this choice, we can then characterize the statistical powers of these tests under alternative hypotheses $\theta_\star \neq \theta_0$ where $\theta_\star$ may depend on $n$.
Let $\Omega(\theta) := G(\theta)^{1/2} H(\theta)^{-1} G(\theta)^{1/2}$ and $h(\tau) := \min\{\tau^2, \tau\}$.

\begin{proposition}[Statistical power]
\label{prop:power}
    Let $\theta_\star \neq \theta_0$.
    The following statements are true for sufficiently large $n$.
    \begin{enumerate}
        \item[(a)] Suppose that \Cref{asmp:self_concordance,asmp:sub_gaussian,asmp:bernstein} hold true with $r=0$.
        When $\theta_\star - \theta_0 = O(n^{-1/2})$ and $\tau_n := t_n(\alpha)/4 - \norm{S(\theta_0)}_{H(\theta_0)^{-1}}^2 - \Tr(\Omega(\theta_0))/n > 0$, we have
        \begin{align*}
            &\quad \Prob(\rao > t_n(\alpha)) \\
            &\le 2d e^{- C_{K_2, \sigma_H} n} + e^{-C_{K_1} h(n \tau_n/\norm{\Omega(\theta_0)}_2)}.
        \end{align*}
        When $\theta_* - \theta_n = \omega(n^{-1/2})$, we have
        \begin{align*}
            &\quad \Prob(\rao > t_n(\alpha)) \\
            &\ge 1 - 2d e^{-C_{K_2, \sigma_H} n} - e^{-C_{K_1} n \bar \tau_n/\norm{\Omega(\theta_0)}_2},
        \end{align*}
        where $\bar \tau_n = \Theta(\norm{\theta_\star - \theta_n}^2)$.
        
        \item[(b)] Suppose that the assumptions in \Cref{thm:conf_set} hold true.
        When $\theta_\star - \theta_0 = O(n^{-1/2})$ and $\tau_n' := t_n(\alpha)/384 - \norm{\theta_\star - \theta_0}_{H(\theta_\star)}^2/64 - d/n > 0$, we have
        \begin{align*}
            &\quad \Prob(\lr > t_n(\alpha)) \\
            &\le e^{-C_{K_1} h(n\tau_n'/\norm{\Omega(\theta_\star)}_2)} + e^{-C_{K_1, \nu} (\lambda_\star n)^{3-\nu}/(R^2 d)}.
        \end{align*}
        When $\theta_* - \theta_n = \omega(n^{-1/2})$, we have
        \begin{align*}
            &\quad \Prob(\lr > t_n(\alpha)) \\
            &\ge 1 - e^{-C_{K_1} \frac{n \bar \tau_n'}{\norm{\Omega(\theta_\star)}_2}} - e^{-\frac{C_{K_1, \nu} (\lambda_\star n)^{3-\nu}}{R^2 d}},
        \end{align*}
        where $\bar \tau_n' = \Theta(\norm{\theta_\star - \theta_n}^2)$.
        
        \item[(c)] The same statements replacing $\lr$ by $\wald$.
    \end{enumerate}
\end{proposition}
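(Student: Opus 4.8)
The plan is to prove all three parts from a single template: reduce each statistic, up to constants, to a quadratic form $\norm{\cdot}_{\Omega(\cdot)}^2$ of a standardized empirical average whose mean encodes the separation $\theta_\star - \theta_0$, and then concentrate that quadratic form. Two error budgets will recur throughout. The first, $2d\,e^{-C_{K_2,\sigma_H}n}$, comes from the empirical-Hessian concentration of \Cref{prop:emp_hess_est} (used with $r=0$ for $\rao$). The second, $e^{-C_{K_1,\nu}(\lambda_\star n)^{3-\nu}/(R^2 d)}$, comes from the self-concordant localization of $\theta_n$ established in \Cref{prop:localization} and \Cref{thm:conf_set}, and is needed only for $\lr$ and $\wald$ since only these depend on $\theta_n$; the denominator is written with $d$ because $d_\star \le d$.

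For part (a), I would first invoke \Cref{prop:emp_hess_est} with $r=0$ to replace $H_n(\theta_0)$ by $H(\theta_0)$ up to constants on an event of probability at least $1 - 2d\,e^{-C_{K_2,\sigma_H}n}$, so that $\rao \asymp \norm{\grad_n(\theta_0)}_{H(\theta_0)^{-1}}^2$. Standardizing the summands by $G(\theta_0)^{-1/2}$ as in \Cref{asmp:sub_gaussian} rewrites this as $\bar U^\top \Omega(\theta_0)\bar U$, where $\bar U$ is the empirical mean of i.i.d.\ sub-Gaussian vectors with mean $\mu_0 := G(\theta_0)^{-1/2}\grad(\theta_0)$ and $\Expect[UU^\top]=I_d$. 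A direct moment computation then gives $\Expect[\rao]\approx \norm{\grad(\theta_0)}_{H(\theta_0)^{-1}}^2 + \Tr(\Omega(\theta_0))/n$, which is exactly the center subtracted off in the definition of $\tau_n$. In the near regime $\theta_\star-\theta_0 = O(n^{-1/2})$ the mean $\mu_0$ is negligible, the fluctuation is dominated by the centered quadratic part, and a Bernstein/Hanson--Wright inequality for quadratic forms of sub-Gaussian vectors (recalled in \Cref{sec:tools}) yields a deviation tail of the form $e^{-C_{K_1}h(n\tau_n/\norm{\Omega(\theta_0)}_2)}$, the crossover $\min\{\tau^2,\tau\}$ being precisely the Gaussian/sub-exponential crossover of that inequality. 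In the far regime the mean dominates, so the relevant lower-tail fluctuation is the linear cross term $\mu_0^\top\Omega(\theta_0)(\bar U-\mu_0)$, which is sub-Gaussian with variance of order $\bar\tau_n\,\norm{\Omega(\theta_0)}_2/n$; a one-sided sub-Gaussian bound then gives $e^{-C_{K_1}n\bar\tau_n/\norm{\Omega(\theta_0)}_2}$ with separation scale $\bar\tau_n$ as in the statement.

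For parts (b) and (c) I would reduce $\lr$ and $\wald$ to a common quadratic form in the estimation error. For $\wald$, write $\theta_n-\theta_0 = (\theta_n-\theta_\star)+(\theta_\star-\theta_0)$; \Cref{thm:conf_set} localizes $\theta_n$ and \Cref{prop:emp_hess_est} (now with $r = C_\nu \lambda_\star^{(\nu-3)/2}/R$) replaces $H_n(\theta_n)$ by $H_\star$, so that $\wald \asymp \norm{H_\star^{1/2}(\theta_n-\theta_\star)+H_\star^{1/2}(\theta_\star-\theta_0)}_2^2$. Since $\sqrt{n}H_\star^{1/2}(\theta_n-\theta_\star)$ has limiting covariance $\Omega(\theta_\star)$, the same quadratic-form concentration as in part (a) applies, now with $\norm{\theta_\star-\theta_0}_{H_\star}^2$ in the role of the signal. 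For $\lr$, the Taylor expansion used in \Cref{cor:lr_conf_set} gives $\lr = \norm{\theta_n-\theta_0}_{H_n(\bar\theta_n)}^2$ for some $\bar\theta_n$ on the segment between $\theta_n$ and $\theta_0$; bounding $H_n(\bar\theta_n)\asymp H_\star$ uniformly over the Dikin ellipsoid via generalized self-concordance (\Cref{asmp:self_concordance}) together with \Cref{prop:emp_hess_est} reduces $\lr$ to $\wald$ up to constants, so (b) and (c) follow from the same argument. The localization failure contributes the $e^{-C_{K_1,\nu}(\lambda_\star n)^{3-\nu}/(R^2 d)}$ term in both.

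I expect the main obstacle to be the non-central quadratic-form concentration: obtaining the sharp $h(\tau)=\min\{\tau^2,\tau\}$ tail with the operator-norm normalization $\norm{\Omega}_2$ while cleanly separating the mean-shift (signal) contribution from the centered quadratic fluctuation, so that the near regime produces the quadratic crossover and the far regime degenerates to the linear exponent. A secondary difficulty is the uniform control of the intermediate Hessian $H_n(\bar\theta_n)$ across the entire segment in the $\lr$ reduction; this is exactly where generalized self-concordance, rather than global strong convexity, is needed to keep the bound free of any direct $\lambda_\star$ dependence. Finally, I would verify that the two error budgets combine into the stated forms in each regime.
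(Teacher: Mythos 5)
Your proposal is correct in its overall architecture and matches the paper's proof skeleton: a matrix-Bernstein event controlling the empirical Hessian (contributing the $2d\,e^{-C_{K_2,\sigma_H}n}$ term), the localization of $\theta_n$ from \Cref{thm:conf_set} (contributing the $e^{-C_{K_1,\nu}(\lambda_\star n)^{3-\nu}/(R^2 d)}$ term for $\lr$ and $\wald$ only), the Taylor-expansion reduction of $\lr$ to $\wald$, and a sub-Gaussian quadratic-form tail producing the $h(\tau)=\min\{\tau^2,\tau\}$ crossover. Where you genuinely diverge is in how the signal $\theta_\star-\theta_0$ is separated from the noise. You propose to analyze the \emph{non-central} quadratic form $\bar U^\top\Omega(\theta_0)\bar U$ directly, computing its mean and controlling the linear cross term $\mu_0^\top\Omega(\theta_0)(\bar U-\mu_0)$ via Hanson--Wright-type arguments; you correctly identify this as the main obstacle. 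The paper never faces that obstacle: it uses the elementary inequalities $\norm{a+b}^2\le 2\norm{a}^2+2\norm{b}^2$ and $\norm{a+b}^2\ge(\norm{a}-\norm{b})^2$ (after the Hessian event turns $H_n^{-1}(\theta_0)$ into $H(\theta_0)^{-1}$ up to factors of $2$) to peel off the deterministic signal $\norm{S(\theta_0)}_{H(\theta_0)^{-1}}^2$ (respectively $\norm{\theta_\star-\theta_0}_{H(\theta_\star)}^2$ for $\wald$/$\lr$), so that only the \emph{centered, isotropic} quadratic form $\norm{S_n(\theta_0)-S(\theta_0)}_{H(\theta_0)^{-1}}^2$ (respectively $\norm{S_n(\theta_\star)}_{H(\theta_\star)^{-1}}^2$, which is already mean zero) needs to be concentrated --- and that is exactly the scope of \Cref{thm:isotropic_tail} as recalled in the appendix, which applies only to isotropic vectors and hence would not cover your $\bar U$ without the extra cross-term work. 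This is also where the specific constants $t_n(\alpha)/4$, $/384$, $/64$ in $\tau_n$ and $\tau_n'$ come from: they are artifacts of the crude triangle-inequality decompositions, not of an expectation computation. Two further small corrections: for $\rao$ the Hessian concentration at the single point $\theta_0$ comes from \Cref{lem:hessian} / \Cref{thm:bernstein_matrix}, not from \Cref{prop:emp_hess_est} with $r=0$ (the latter is a uniform statement over a Dikin ellipsoid of positive radius); and your appeal to the ``limiting covariance'' of $\sqrt{n}H_\star^{1/2}(\theta_n-\theta_\star)$ should be replaced by the deterministic localization bound $\norm{\theta_n-\theta_\star}_{H(\theta_\star)}\le 8\norm{S_n(\theta_\star)}_{H(\theta_\star)^{-1}}$ on the good event, since the argument must remain non-asymptotic. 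None of these is a fatal gap, but your route requires a non-central quadratic-form bound that the paper deliberately avoids needing.
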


According to \Cref{prop:power}, when $\theta_\star - \theta_0 = O(n^{-1/2})$, the powers of the three tests are asymptotically upper bounded; when $\theta_\star - \theta_0 = \omega(n^{-1/2})$, the power of Rao's score test tends to one at rate $O(e^{-n \norm{\theta_\star - \theta_0}^2})$ and the ones of the other two tests tend to one at rate $O(e^{-n \norm{\theta_\star - \theta_0}^2 \wedge n^{3-\nu}})$.

\section{NUMERICAL STUDIES}
\label{sec:experiments}
\begin{figure}
  \centering
  \includegraphics[width=0.6\textwidth]{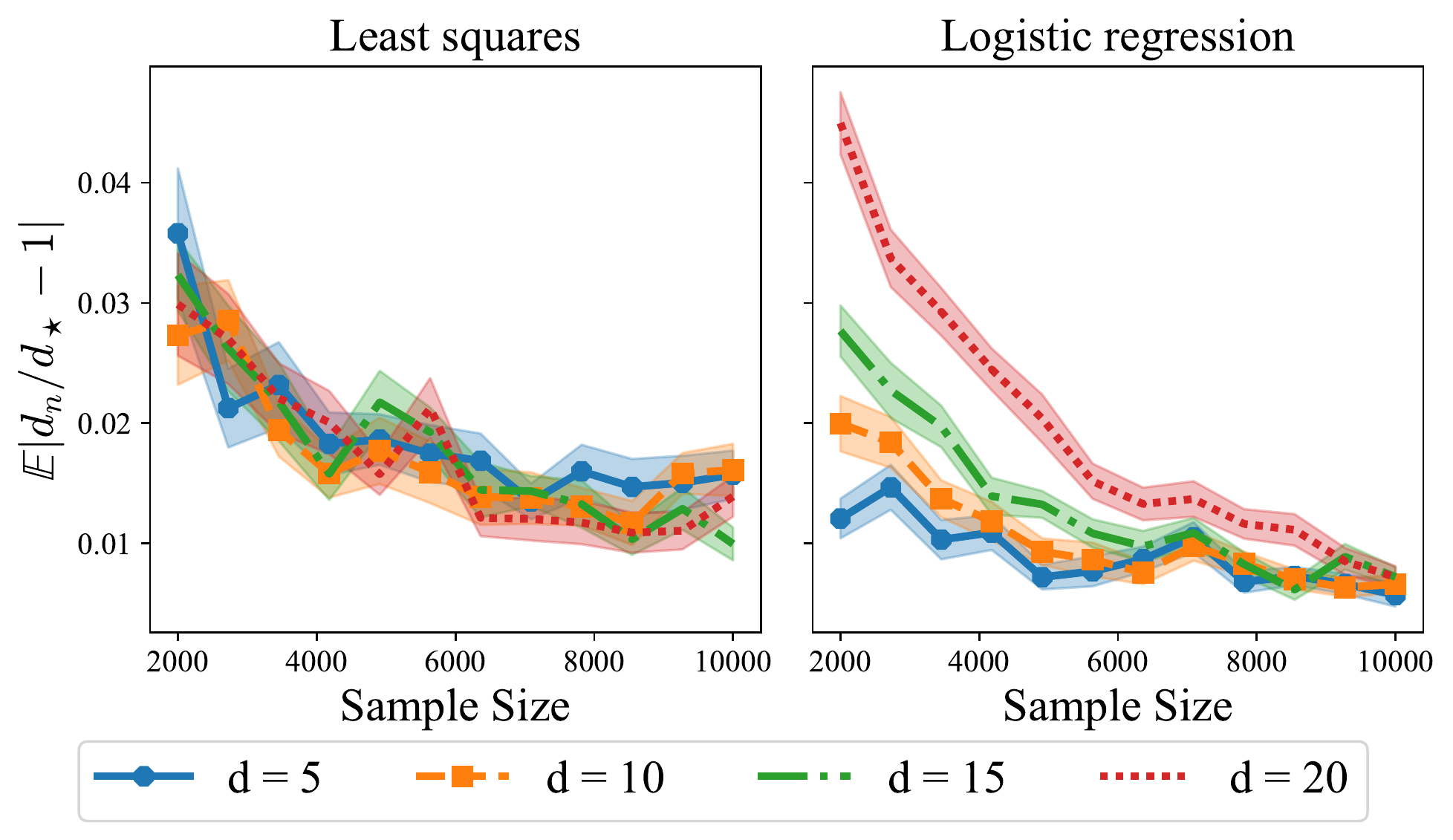}
  \caption{Absolute error of the empirical effective dimension. \textbf{(Left)}: least squares; \textbf{(Right)}: logistic regression.}
  \label{fig:est_effective_dim}
\end{figure}

We run simulation studies to illustrate our theoretical results.
We start by demonstrating the consistency of $d_n$ and the shape of the Wald confidence set defined in \Cref{cor:wald_conf_set}, i.e.,
\begin{align*}
  \calC_{\text{Wald}, n}'(\delta) = \left\{\theta \in \Theta: \norm{\theta - \theta_n}_{H_n(\theta_n)}^2 \le C_{K_1,\nu} \frac{d_n}{n} \log{(e/\delta)} \right\}.
\end{align*}
Note that the oracle Wald confidence set should be constructed from $\norm{\theta_n - \theta_\star}_{H_\star}$ and $d_\star$; however, \Cref{cor:wald_conf_set} suggests that we can replace $H_\star$ and $d_\star$ by $H_n(\theta_n)$ and $d_n$ without losing too much.
To empirically verify our theoretical results, we calibrate the Wald confidence set based on $\norm{\theta_n - \theta_\star}_{H_n(\theta_n)}$ with the threshold from the oracle Wald confidence set and compare its coverage with the one calibrated by the multiplier bootstrap---a popular resampling-based approach for calibration.
Finally, we compare the coverage of the Wald and LR confidence sets calibrated by the multiplier bootstrap.
In all the experiments, we generate $n$ i.i.d.~pairs by sampling $X$ and then sampling $Y \mid X$.

\subsection{Numerical Illustrations}

\paragraph{Approximation of the effective dimension.}
By \Cref{prop:d_n}, we know that $d_n$ is a consistent estimator of $d_\star$.
We verify it with simulations.
We consider two models.
For least squares, the data are generated from $X \sim \calN(0, I_d)$ and $Y | X \sim \calN(\ones^\top X, 1)$.
For logistic regression, the data are generated from $X \sim \calN(0, I_d)$ and $Y \mid X \sim p(Y \mid X) = \sigma(Y \ones^\top X)$ for $Y \in \{-1, 1\}$ where $\sigma(u) := (1 + e^{-u})^{-1}$.
We then estimate $d_\star = d$ (since the model is well-specified) by $d_n$ and quantify its estimation error by $\Expect\abs{d_n / d_\star - 1}$.
We vary $n \in [2000, 10000]$ and $d \in \{5, 10, 15, 20\}$, and give the plots in \Cref{fig:est_effective_dim}.
For a fixed $d$, the absolute error decays to zero as the sample size increases as predicted by \Cref{prop:d_n}.
For a fixed $n$, the absolute error raises as the dimension becomes larger in logistic regression, but it remains similar in least squares.

\begin{figure}
  \centering
  \includegraphics[width=0.7\textwidth]{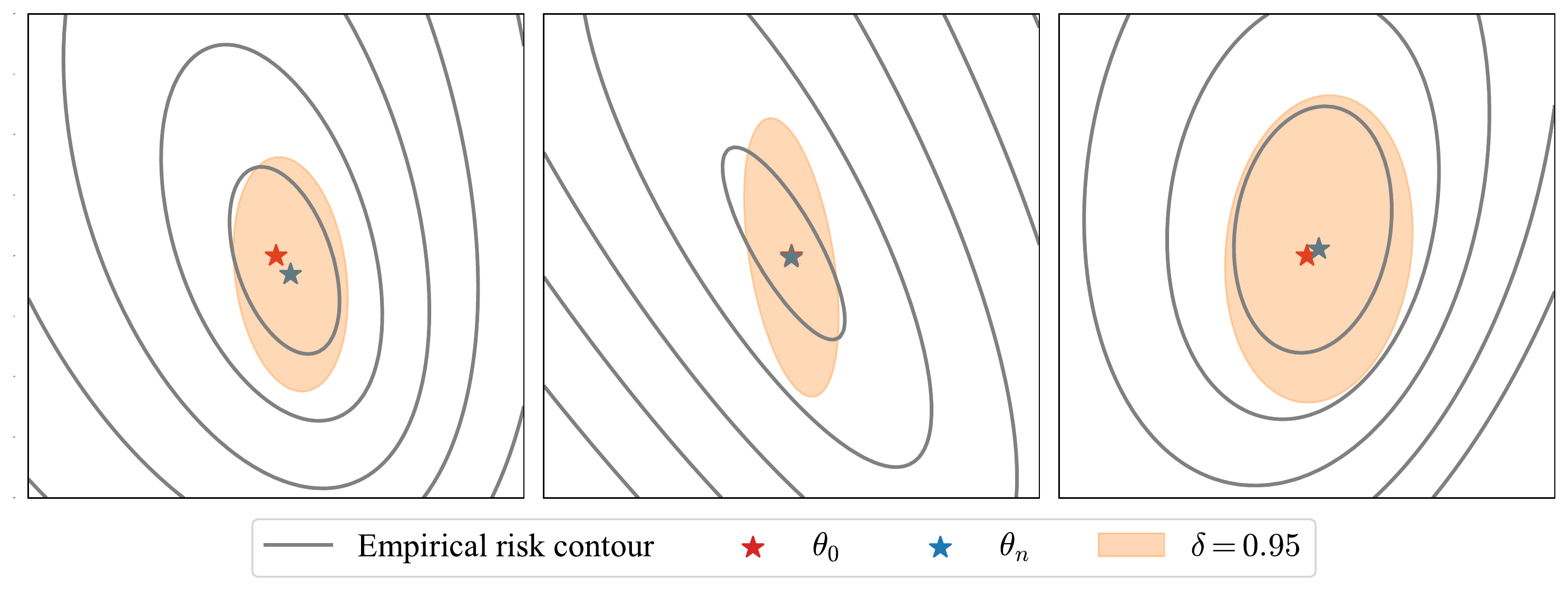}
  \caption{Confidence set in \Cref{cor:wald_conf_set} under a logistic regression model. \textbf{Left:} $\Sigma = (2, 0; 0, 1)$; \textbf{Middle:} $\Sigma = (2, 1; 1, 1)$; \textbf{Right:} $\Sigma = (2, -1; -1, 1)$.}
  \label{fig:logistic_conf_set}
\end{figure}

\paragraph{Shape of the Wald confidence set.}
Recall that the Wald confidence set in \Cref{thm:conf_set} is an ellipsoid whose shape is determined by the empirical Hessian $H_n(\theta_n)$ and thus can effectively handles the local curvature of the empirical risk.
We illustrate this feature on a logistic regression example.
We generate data from $X \sim \calN(0, \Sigma)$ with different $\Sigma$'s and $Y \mid X \sim p(Y \mid X) = \sigma(Y \theta_0^\top X)$ for $Y \in \{-1, 1\}$ where $\theta_0 = (-1, 2)^\top$.
We then construct the confidence set with $d_\star = d$.
As shown in \Cref{fig:logistic_conf_set}, the shape of the confidence set varies with $\Sigma$ and captures the curvature of the empirical risk at $\theta_0$.

\subsection{Calibration}
\label{sub:bootstrap}

We investigate two calibration schemes.
Inspired by the setting in \citet[Sec.~5.1]{chen2020robust},
we generate $n = 100$ i.i.d.~observations from three models with true parameter $\theta_0$ whose elements are equally spaced between $[0, 1]$---1) \emph{well-specified least squares} with $X \sim \calN(0, I_d)$ and $Y \mid X \sim \calN(\theta_0^\top X, 1)$, 2) \emph{misspecified least squares} with $X \sim \calN(0, I_d)$ and $Y \mid X \sim \theta_0^\top X + t_{3.5}$, and 3) \emph{well-specified logistic regression} with $X \sim \calN(0, I_d)$ and $Y \mid X \sim p(Y \mid X) = \sigma(Y \theta_0^\top X)$ for $Y \in \{-1, 1\}$.
For each $\delta \in \{0.95, 0.9, 0.85, 0.8, 0.75\}$, we construct a confidence set using either \emph{oracle calibration} or \emph{multiplier bootstrap}.
We repeat the whole process for $1000$ times and report the coverage of each confidence set in \Cref{tab:bootstrap}.

\begin{table*}[t]
  \caption{Coverage of the oracle and bootstrap confidence sets.}
  \label{tab:bootstrap}
  \centering
  \renewcommand{\arraystretch}{1.2}
  \begin{tabular}{llccccc}
      \addlinespace[0.4em]
      \toprule
      \multicolumn{1}{c}{\textbf{Model}} & \multicolumn{1}{c}{\textbf{Confidence set}} & \multicolumn{1}{c}{$\delta = 0.95$} & \multicolumn{1}{c}{$\delta = 0.9$} & \multicolumn{1}{c}{$\delta = 0.85$} & \multicolumn{1}{c}{$\delta = 0.8$} & \multicolumn{1}{c}{$\delta = 0.75$}  \\
      \midrule
      \multirow{3}{*}{Well-specified least squares} & Oracle & 0.957 & 0.908 & 0.868 & 0.792 & 0.770 \\
      & BootWald & 0.947 & 0.908 & 0.855 & 0.791 & 0.735 \\
      & BootLR & 0.949 & 0.906 & 0.852 & 0.792 & 0.737 \\
      \midrule
      \multirow{3}{*}{Misspecified least squares} & Oracle & 0.972 & 0.916 & 0.882 & 0.841 & 0.764 \\
      & BootWald & 0.968 & 0.924 & 0.865 & 0.779 & 0.727 \\
      & BootLR & 0.972 & 0.923 & 0.865 & 0.784 & 0.727 \\
      \midrule
      \multirow{3}{*}{Well-specified logistic regression} & Oracle & 0.961 & 0.915 & 0.868 & 0.809 & 0.776 \\
      & BootWald & 0.938 & 0.885 & 0.826 & 0.781 & 0.706 \\
      & BootLR & 0.976 & 0.948 & 0.901 & 0.866 & 0.791 \\
      \bottomrule
  \end{tabular}
\end{table*}

\paragraph{Oracle calibration.}
According to \Cref{thm:risk_bound_generalized}, if we have access to $H_\star$ and $d_\star$, we can construct a confidence set of the form $\calC_\star(\delta) := \{\theta: \norm{\theta_n - \theta}_{H_\star} \le d_\star/n + c_n(\delta)\}$.
Now \Cref{cor:wald_conf_set} suggests that $H_\star$ and $d_\star$ can be accurately estimated by $H_n(\theta_n)$ and $d_n$, respectively, leading the confidence set $\calC_n(\delta) := \{\theta: \norm{\theta_n - \theta}_{H_n(\theta_n)} \le d_n/n + c_n(\delta)\}$.
To calibrate $\calC_n(\delta)$, we use the data generating distribution to estimate $c_n(\delta)$ so that $\Prob(\theta_\star \in \calC_\star(\delta)) \approx 1 - \delta$, and then plug it into $\calC_n(\delta)$.
We call it the \emph{oracle Wald confidence set}.
As shown in \Cref{tab:bootstrap}, its coverage is very close to the prescribed confidence level in the well-specified case and it tends to be more conservative in the misspecified case.

\paragraph{Multiplier bootstrap.}
To further evaluate the oracle calibration, we compare its coverage with the one calibrated by the multiplier bootstrap \citep[e.g.,][]{chen2020robust}---a popular resampling-based calibration approach that is widely used in practice.
We construct a \emph{bootstrap Wald confidence set} (BootWald) with $B = 2000$ bootstrap samples in the following steps.
For each $b \in \{1, \dots, B\}$, we 1) generate weights $\{W_i^b\}_{i=1}^n \txtover{i.i.d.}{\sim} \calN(1, 1)$, 2) compute the bootstrap estimator
\begin{align*}
  \theta_n^b = \argmin_{\theta} \left[ L_n^b(\theta) := \frac1n \sum_{i=1}^n W_i^b \ell(\theta; Z_i) \right],
\end{align*}
3) compute the bootstrap Wald statistic $T_{\text{Wald}}^b := \norm{\theta_n^b - \theta_n}_{H_n^b(\theta_n^b)}^2$ where $H_n^b(\theta) := \nabla_\theta^2 L_n^b(\theta)$.
Finally, we compare $\norm{\theta_n - \theta_0}_{H_n(\theta_n)}^2$ with the upper $\delta$ quantile of $\{T_{\text{Wald}}^b\}_{b=1}^B$ to decide if the Wald confidence set covers the true parameter.
It is clear that the bootstrap Wald confidence set performs similarly as the oracle Wald confidence set in least squares, but it is more liberal in logistic regression.

For comparison purposes, we also describe the procedure to construct a \emph{bootstrap likelihood ratio confidence set} (BootLR).
The first two steps are the same as the bootstrap Wald confidence set, while the third step is to compute the bootstrap LR statistic $T_{\text{LR}}^b := 2[L_n^b(\theta_n) - L_n^b(\theta_n^b)]$.
And we compare $2[L_n(\theta_0) - L_n(\theta_n)]$ with the upper $\delta$ quantile of $\{T_{\text{LR}}^b\}_{b=1}^B$ to decide if the bootstrap LR confidence set covers the true parameter.
For the well-specified least squares, the two bootstrap confidence sets perform similarly with coverages close to the target ones.
However, when the target coverage is small (i.e., $0.75$), they tend to be liberal.
For the misspecified least squares, the bootstrap two confidence sets perform similarly.
When the target coverage is large, they tend to be conservative; when the target coverage is small, they tend to be liberal.
For the well-specified logistic regression, the bootstrap Wald confidence set tends to be liberal and the bootstrap LR one tends to be conservative.

\subsubsection*{Acknowledgements}
The authors would like to thank K.~Jamieson, L.~Jain, and V. Roulet for fruitful discussions.
L.~Liu is supported by NSF CCF-2019844 and NSF DMS-2023166 and NSF DMS-2133244.
Z.~Harchaoui is supported by NSF CCF-2019844, NSF DMS-2134012, NSF DMS-2023166, CIFAR-LMB, and faculty research awards.
Part of this work was done while Z.~Harchaoui was visiting the Simons Institute for the Theory of Computing.

\clearpage

\bibliographystyle{abbrvnat}
\bibliography{biblio}

\clearpage
\appendix

\begingroup
\let\clearpage\relax 
\onecolumn 
\endgroup

\addcontentsline{toc}{section}{Appendix} 
\part{Appendix} 
\parttoc
\clearpage

\section{Proof of main results}
\label{sec:proofs}
Our proof techniques rely on a self-concordance property to localize the estimator and control the Hessian and related quantities. This property was, up to our knowledge, first put to use in machine learning by~\citet{abernethy2008efficient} in the context of sequential allocation of experiments and multi-armed bandits. The key observation is that, within the Dikin ellipsoid, the variation of the Hessian
can be easily controlled. More recently,~\citet{ostrovskii2021finite} obtained risk bounds for generalized linear models based on this observation.
Our results and proof techniques also rely on this observation. We show how to leverage this observation to obtain confidence sets for a broad class of statistical models under a generalized self-concordance assumption owing to the use of the matrix Bernstein inequality. For instance, we obtain confidence bounds for parameter estimation using score matching and generalized linear statistical models under possible model misspecification as provided in \Cref{sec:application}.

Our proofs are inspired by \citet{ostrovskii2021finite}.
However, there are two key differences.
First, since they focus on loss functions of the form $\ell(Y, \theta^\top X)$, the Hessian is $\ell''(Y, \theta^\top X) XX^\top$ where $\ell''(y, \bar y) := \D^2 \ell(y, \bar y) / \D \bar y^2$.
As a result, they can control the deviation of the empirical Hessian using inequalities for sample second-moment matrices of sub-Gaussian random vectors \citep[Thm.~A.2]{ostrovskii2021finite}.
In contrast, we use matrix Bernstein inequality which allows us to work with a larger class of loss functions.
Second, we extend their localization result from pseudo self-concordant losses to generalized self-concordant losses (\Cref{prop:localization}).
This is enabled by a new property on the existence of a unique minimizer for generalized self-concordant functions (\Cref{prop:self_concordance_local}). We also establish the concentration of the effective dimension. 

In the remainder of this section, we first prove the localization result \Cref{prop:localization} and the score bound \Cref{prop:score} in \Cref{sub:appendix:local}.
It not only guarantees the existence and uniqueness of $\theta_n$ but also localizes it.
We then, in \Cref{sub:appendix:thm}, control the empirical Hessian at $\theta_n$ as in \Cref{prop:emp_hess_est} using a covering number argument.
Finally, we prove \Cref{thm:risk_bound_generalized}, \Cref{thm:conf_set}, and \Cref{prop:d_n}.

We use the notation $C$ to denote a constant which may change from line to line, where subscripts are used to emphasize the dependency on other quantities.
For instance, $C_d$ represents a quantity depending only on $d$.

\subsection{Localization}
\label{sub:appendix:local}

We start by showing that the empirical risk $L_n$ is generalized self-concordant.
\begin{lemma}\label{lem:emp_risk_self_concordance}
    Under \Cref{asmp:self_concordance}, the empirical risk $L_n$ is $(n^{\nu/2-1} R, \nu)$-generalized self-concordant.
\end{lemma}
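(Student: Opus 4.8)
The plan is to differentiate under the finite sum and reduce the claim to an elementary inequality about nonnegative reals. First I would write $L_n(\theta) = n^{-1}\sum_{i=1}^n \score(\theta; Z_i)$, so that by linearity of the directional derivatives $D_\theta^3 L_n(\theta)[u,u,v] = n^{-1}\sum_{i=1}^n D_\theta^3 \score(\theta; Z_i)[u,u,v]$ and $\nabla^2 L_n(\theta) = H_n(\theta) = n^{-1}\sum_{i=1}^n H(\theta; Z_i)$. Convexity of each $\score(\cdot; Z_i)$ guarantees $H(\theta;Z_i)\succeq 0$, hence $H_n(\theta)\succeq 0$, so every weighted seminorm below is well defined; moreover $L_n$ is closed convex as a finite average of closed convex functions, which is one half of the conclusion. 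Applying the triangle inequality together with the $(R,\nu)$-generalized self-concordance of each $\score(\cdot; Z_i)$ from \Cref{asmp:self_concordance} gives
\[
    \abs{D_\theta^3 L_n(\theta)[u,u,v]} \le \frac{R}{n}\sum_{i=1}^n \norm{u}_{H(\theta;Z_i)}^2\, \norm{v}_{H(\theta;Z_i)}^{\nu-2}\,\norm{v}_2^{3-\nu}.
\]
Since $\norm{v}_2^{3-\nu}$ does not depend on $i$ it factors out, and it remains to control the sum of the products of the $H(\theta;Z_i)$-seminorms.

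Next I would set $a_i := \norm{u}_{H(\theta;Z_i)}^2 \ge 0$ and $b_i := \norm{v}_{H(\theta;Z_i)}^2 \ge 0$, so that $\norm{u}_{H_n(\theta)}^2 = n^{-1}\sum_i a_i$ and $\norm{v}_{H_n(\theta)}^2 = n^{-1}\sum_i b_i$. The task then reduces to showing $\sum_i a_i\, b_i^{(\nu-2)/2} \le \big(\sum_i a_i\big)\big(\sum_i b_i\big)^{(\nu-2)/2}$. Because $\nu \ge 2$, the map $t \mapsto t^{(\nu-2)/2}$ is nondecreasing on $[0,\infty)$, so $b_i^{(\nu-2)/2} \le \big(\sum_j b_j\big)^{(\nu-2)/2}$ for every $i$; summing against the nonnegative weights $a_i$ yields the desired inequality. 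This is the only genuinely nontrivial step, and its content is simply that a single summand never exceeds the full nonnegative sum; the degenerate cases are absorbed by the $0/0=0$ convention already built into \Cref{def:general_self_concordance}.

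Finally I would track the powers of $n$. Using $\sum_i a_i = n\,\norm{u}_{H_n(\theta)}^2$ and $\sum_i b_i = n\,\norm{v}_{H_n(\theta)}^2$ and substituting into the bound gives
\[
    \frac{R\,\norm{v}_2^{3-\nu}}{n}\sum_i a_i\, b_i^{(\nu-2)/2} \le \frac{R\,\norm{v}_2^{3-\nu}}{n}\,\big(n\,\norm{u}_{H_n(\theta)}^2\big)\big(n\,\norm{v}_{H_n(\theta)}^2\big)^{(\nu-2)/2} = n^{\nu/2-1} R\, \norm{u}_{H_n(\theta)}^2\, \norm{v}_{H_n(\theta)}^{\nu-2}\,\norm{v}_2^{3-\nu},
\]
where the exponent collapses via $-1 + 1 + (\nu-2)/2 = \nu/2 - 1$. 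Combining this with the first display shows that $L_n$ satisfies the defining inequality with constant $n^{\nu/2-1}R$ and the same $\nu$, which together with the closed convexity noted above is exactly $(n^{\nu/2-1}R,\nu)$-generalized self-concordance. I expect no serious obstacle here beyond keeping the exponent bookkeeping straight; the one conceptual point is recognizing that replacing the per-sample Hessians $H(\theta;Z_i)$ by their average $H_n(\theta)$ costs precisely the factor $n^{\nu/2-1}$, and that this factor is harmless because it is later paid back through the $n^{\nu/2-1}$ scaling of the gradient threshold in \Cref{prop:localization}.
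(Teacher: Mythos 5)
Your proof is correct and is essentially the paper's argument: the paper simply invokes Proposition~1 of \citet{sun2019generalized} on the average of the $(R,\nu)$-generalized self-concordant summands, and your computation (triangle inequality, the bound $b_i^{(\nu-2)/2}\le(\sum_j b_j)^{(\nu-2)/2}$, and the exponent bookkeeping giving $n^{\nu/2-1}$) is exactly the proof of that cited proposition specialized to equal weights $1/n$. No gaps.
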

\begin{proof}
    By \Cref{asmp:self_concordance}, the loss $\ell(\cdot; Z_i)$ is $(R, \nu)$-generalized self-concordant for every $i \in [n] := \{1, \dots, n\}$.
    Note that $L_n$ is the empirical average of $\{\ell(\cdot; Z_i)\}_{i=1}^n$.
    Hence, it follows from \cite[Prop.~1]{sun2019generalized} that $L_n$ is $(n^{\nu/2-1}R, \nu)$-generalized self-concordant
\end{proof}

Applying \Cref{prop:self_concordance_local} to $L_n$ leads to the localization result.
Let $\lambda_{n,\star} := \lambda_{\min}(H_n(\theta_\star))$ and $\lambda_n^\star := \lambda_{\min}(H_n(\theta_\star))$.
Recall $K_\nu$ from \Cref{cor:K_nu}.
Define
\begin{align}\label{eq:R_n_nu_star}
    R_{n, \nu}^\star :=
    \begin{cases}
        \lambda_{n,\star}^{-1/2} R & \mbox{if } \nu = 2 \\
        (\nu/2 -1) \lambda_{n,\star}^{(\nu - 3)/2} n^{\nu/2-1} R & \mbox{if } \nu \in (2, 3] \\
        (\nu/2 - 1) (\lambda_{n}^\star)^{(\nu - 3)/2} n^{\nu/2-1} R & \mbox{if } \nu > 3.
    \end{cases}
\end{align}

We can then prove \Cref{prop:localization}.
\begin{customprop}{\ref{prop:localization}}
    Under \Cref{asmp:self_concordance},
    whenever $R_{n,\nu}^\star \norm{\grad_n(\theta_\star)}_{H_n^{-1}(\theta_\star)} \le K_\nu$,
    the estimator $\theta_n$ uniquely exists and satisfies
    \begin{align*}
        \norm{\theta_n - \theta_\star}_{H_n(\theta_\star)} \le 4 \norm{\grad_n(\theta_\star)}_{H_n^{-1}(\theta_\star)}.
    \end{align*}
\end{customprop}
\begin{proof}
    The claim follows directly from \Cref{lem:emp_risk_self_concordance,prop:self_concordance_local}.
\end{proof}

\Cref{prop:localization} implies that the empirical risk minimizer uniquely exists if $\norm{\grad_n(\theta_\star)}_{H_n^{-1}(\theta_\star)}$ is small.
Hence, it remains to bound $\norm{\grad_n(\theta_\star)}_{H_n^{-1}(\theta_\star)}$, which can be achieved by controlling $\norm{\grad_n(\theta_\star)}_{H_\star^{-1}}$ and $H_n(\theta_\star)$.
Let $\Omega(\theta) := G(\theta)^{1/2} H(\theta)^{-1} G(\theta)^{1/2}$ and $\Omega_\star := \Omega(\theta_\star)$
Recall from \Cref{def:effective_dim} that $d_\star = \Tr(\Omega_\star)$.
\begin{lemma}\label{lem:score}
    Under \Cref{asmp:sub_gaussian}, it holds that, with probability at least $1 - \delta$,
    \begin{align*}
        \norm{S_n(\theta_\star)}_{H_\star^{-1}}^{2} \le \frac{d_\star}n + C K_{1}^2 \log{(e / \delta)} \frac{\norm{\Omega_\star}_2}{n}.
    \end{align*}
\end{lemma}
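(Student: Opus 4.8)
The plan is to exploit the first-order optimality condition $\grad(\theta_\star) = \Expect[\grad(\theta_\star; Z)] = 0$, which makes $\grad_n(\theta_\star) = \frac1n\sum_{i=1}^n \grad(\theta_\star; Z_i)$ a \emph{centered} average of i.i.d.\ vectors, and then to recognize the target quantity as a quadratic form in a whitened isotropic sub-Gaussian vector whose expectation is exactly $d_\star/n$. The deviation of this quadratic form from its mean is what carries the $\norm{\Omega_\star}_2$ term.

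First I would whiten the gradient. Set $v_i := G_\star^{-1/2}\grad(\theta_\star; Z_i)$. By \Cref{asmp:sub_gaussian} each $v_i$ satisfies $\norm{v_i}_{\psi_2}\le K_1$, and since $\Expect[\grad(\theta_\star;Z)] = 0$ and $\Expect[\grad(\theta_\star;Z)\grad(\theta_\star;Z)^\top] = G_\star$, the $v_i$ are i.i.d., mean zero, and isotropic, i.e.\ $\Expect[v_i v_i^\top] = I_d$. Writing $\bar v := \frac1n\sum_{i=1}^n v_i$ and recalling $\Omega_\star = G_\star^{1/2} H_\star^{-1} G_\star^{1/2}$, we have $\grad_n(\theta_\star) = G_\star^{1/2}\bar v$, so a direct computation gives $\norm{\grad_n(\theta_\star)}_{H_\star^{-1}}^2 = \bar v^\top \Omega_\star \bar v$. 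Because $\Expect[\bar v \bar v^\top] = \frac1n I_d$, the mean of this quadratic form is $\Expect[\bar v^\top\Omega_\star\bar v] = \frac1n\Tr(\Omega_\star) = d_\star/n$, which is precisely the leading term in the claim.

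Next I would control the deviation of $\bar v^\top\Omega_\star\bar v$ from its mean by a concentration inequality for quadratic forms of sub-Gaussian vectors (a Hanson--Wright / Hsu--Kakade--Zhang type bound, as recalled in \Cref{sec:tools}); since $\bar v$ is sub-Gaussian with parameter of order $K_1/\sqrt n$, this yields, with probability at least $1-\delta$,
\[
    \bar v^\top\Omega_\star\bar v \;\le\; \frac{d_\star}{n} + \frac{C K_1^2}{n}\Big(\norm{\Omega_\star}_F\sqrt{\log(e/\delta)} + \norm{\Omega_\star}_2\log(e/\delta)\Big).
\]
Finally, I would reduce the Frobenius norm to the spectral norm using $\norm{\Omega_\star}_F^2 = \Tr(\Omega_\star^2) \le \norm{\Omega_\star}_2\,\Tr(\Omega_\star) = \norm{\Omega_\star}_2\, d_\star$ (valid since $\Omega_\star\succeq 0$), so that the cross term becomes $\sqrt{\norm{\Omega_\star}_2\, d_\star\,\log(e/\delta)}$, and then absorb it into the $d_\star/n$ and $\norm{\Omega_\star}_2\log(e/\delta)/n$ terms via Young's inequality, arriving at a bound of the claimed form $\frac{d_\star}{n} + C K_1^2\log(e/\delta)\frac{\norm{\Omega_\star}_2}{n}$ after adjusting the constant $C$.

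The main obstacle I anticipate is making the fluctuation depend only on the spectral norm $\norm{\Omega_\star}_2$ while keeping the leading term anchored at the exact mean $d_\star/n$: the quadratic-form tail bound intrinsically has variance proxy $\norm{\Omega_\star}_F^2$, so the Frobenius-to-spectral reduction together with the Young-inequality absorption is the delicate step, and it is what makes the bound match the misspecified Cram\'er--Rao form $d_\star + \log(e/\delta)\norm{\Omega_\star}_2$ referenced in the remark after \Cref{thm:conf_set}. A secondary technical point is that $\bar v$ need not have independent coordinates, so the concentration inequality invoked must be the version valid for general isotropic sub-Gaussian vectors rather than the classical Hanson--Wright statement for vectors with independent entries.
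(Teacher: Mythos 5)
Your proposal is correct and follows essentially the same route as the paper: whiten the gradient using the optimality condition $S(\theta_\star)=0$, recognize $\norm{S_n(\theta_\star)}_{H_\star^{-1}}^2$ as a quadratic form $\norm{X}_J^2$ in the isotropic sub-Gaussian vector $X=\sqrt{n}\,G_\star^{-1/2}S_n(\theta_\star)$ with $J=\Omega_\star/n$ (sub-Gaussianity of the average coming from \Cref{lem:sum_subg}), and apply \Cref{thm:isotropic_tail}. The Frobenius-to-spectral reduction and Young-inequality absorption you describe is exactly the step the paper leaves implicit in ``yields the claim,'' so your write-up simply makes that explicit.
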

\begin{proof}
    By the first order optimality condition, we have $S(\theta_\star) = 0$.
    As a result,
    \begin{align*}
        X := \sqrt{n} G^{-1/2}(\theta_\star) S_n(\theta_\star; Z)
    \end{align*}
    is an isotropic random vector.
    Moreover, it follows from \Cref{lem:sum_subg} that $\norm{X}_{\psi_2} \lesssim K_{1}$.
    Define $J := G^{1/2}_\star H_\star^{-1} G^{1/2}_\star / n$.
    Then we have
    \begin{align*}
        \norm{S_n(\theta_\star)}_{H_\star^{-1}}^{2} = \norm{X}_{J}^2.
    \end{align*}
    Invoking \Cref{thm:isotropic_tail} yields the claim.
\end{proof}

The next result characterizes the concentration of $H_n(\theta_\star)$.
Let
\begin{align}\label{eq:t_n}
    t_n := t_n(\delta) := \frac{2\sigma_H^2}{-K_2 + \sqrt{K_2^2 + 2\sigma_H^2 n/\log{(4d/\delta)}}}.
\end{align}
Note that it decays to 0 at rate $O(n^{-1/2})$ as $n \rightarrow \infty$.
\begin{lemma}\label{lem:hessian}
    Under \Cref{asmp:bernstein} with $r = 0$, it holds that, with probability at least $1 - \delta$,
    \begin{align*}
        (1 - t_n) H_\star \preceq H_n(\theta_\star) \preceq (1 + t_n) H_\star.
    \end{align*}
    Furthermore, if $n \ge 4(K_2 + 2\sigma_H^2) \log{(2d/\delta)}$, we have $t_n \le 1/2$ and thus
    \begin{align*}
        \frac12 H_\star \preceq H_n(\theta_\star) \preceq \frac32 H_\star.
    \end{align*}
\end{lemma}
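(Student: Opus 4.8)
The plan is to apply the matrix Bernstein inequality (recalled in \Cref{sec:tools}) to the centered, standardized summands $M_i := H_\star^{-1/2} H(\theta_\star; Z_i) H_\star^{-1/2} - I_d$. Under \Cref{asmp:bernstein} with $r=0$, each $M_i$ is mean-zero, satisfies a Bernstein condition with parameter $K_2$, and has variance proxy bounded by $\sigma_H^2$ (since $\Theta_0(\theta_\star) = \{\theta_\star\}$ by convention). The key observation is that the event I want, namely $(1 - t_n)H_\star \preceq H_n(\theta_\star) \preceq (1 + t_n)H_\star$, is equivalent after left- and right-multiplying by $H_\star^{-1/2}$ to the spectral-norm bound
\begin{align*}
    \norm{\tfrac1n \textstyle\sum_{i=1}^n M_i}_2 \le t_n.
\end{align*}
So the entire statement reduces to a concentration bound on the spectral norm of a sum of i.i.d.~centered random matrices.

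First I would invoke the matrix Bernstein inequality to obtain a tail bound of the form $\Prob(\norm{n^{-1}\sum_i M_i}_2 > t) \le 2d \exp(-nt^2 / (2\sigma_H^2 + 2K_2 t))$ for all $t > 0$. Setting the right-hand side equal to $\delta$ and solving the resulting quadratic inequality $nt^2 - 2K_2 t \log(2d/\delta) - 2\sigma_H^2 \log(2d/\delta) \ge 0$ for the threshold $t$ yields exactly the expression $t_n$ defined in \eqref{eq:t_n} (up to the precise constant inside the logarithm, which I would track carefully to match $4d/\delta$ versus $2d/\delta$). This gives the first claim with probability at least $1 - \delta$. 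The only mildly delicate point here is matching the logarithmic factor and confirming that the quadratic formula reproduces the stated $t_n$; this is routine algebra but must be done consistently with whichever form of matrix Bernstein is stated in \Cref{sec:tools}.

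For the second claim, I would verify that $n \ge 4(K_2 + 2\sigma_H^2)\log(2d/\delta)$ forces $t_n \le 1/2$. From the defining formula, $t_n \le 1/2$ is equivalent to $-K_2 + \sqrt{K_2^2 + 2\sigma_H^2 n / \log(4d/\delta)} \ge 4\sigma_H^2$, i.e.~$\sqrt{K_2^2 + 2\sigma_H^2 n/\log(4d/\delta)} \ge K_2 + 4\sigma_H^2$; squaring and rearranging gives a lower bound on $n$ that is implied by the stated sample-size condition. Substituting $t_n \le 1/2$ into the first claim then yields $\tfrac12 H_\star \preceq H_n(\theta_\star) \preceq \tfrac32 H_\star$. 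The main obstacle—if any—is purely bookkeeping: ensuring the constants and logarithmic arguments in $t_n$ are exactly the ones produced by the chosen matrix Bernstein statement, and confirming that the scalar inequality $t_n \le 1/2$ indeed follows from the given threshold on $n$. There is no deep difficulty here once the summands are correctly identified as isotropic-in-expectation and the Bernstein hypothesis is supplied by \Cref{asmp:bernstein}.
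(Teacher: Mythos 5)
Your proposal is correct and follows essentially the same route as the paper: standardize the per-sample Hessians to get mean-zero matrices satisfying the Bernstein condition, apply the matrix Bernstein inequality of \Cref{thm:bernstein_matrix} to bound $\lVert H_\star^{-1/2} H_n(\theta_\star) H_\star^{-1/2} - I_d \rVert_2$, invert the tail bound to recover $t_n$, and check the sample-size condition forces $t_n \le 1/2$. The only caveat you already flag yourself---matching the $\log(4d/\delta)$ versus $\log(2d/\delta)$ constant in \eqref{eq:t_n}---is a harmless bookkeeping slack in the paper as well.
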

\begin{proof}
    Due to \Cref{asmp:bernstein}, the standardized Hessian at $\theta_\star$
    \begin{align*}
        H_\star^{-1/2} H(\theta_\star; Z) H_\star^{-1/2} - I_d
    \end{align*}
    satisfies a Bernstein condition with parameter $K_2$.
    It then follows from \Cref{thm:bernstein_matrix} that
    \begin{align*}
        \Prob\left( \anorm{H_\star^{-1/2} H_n(\theta_\star) H_\star^{-1/2} - I_d}_2 \ge t \right) \le 2d \exp\left\{-\frac{nt^2}{2(\sigma_H^2 + K_2 t)} \right\}.
    \end{align*}
    As a result, it holds that, with probability at least $1 - \delta$,
    $(1 - t_n)I_d \preceq H_\star^{-1/2} H_n(\theta_\star) H_\star^{-1/2} \le (1 + t_n)I_d$, or equivalently,
    \begin{align*}
        (1 - t_n) H_\star \preceq H_n(\theta_\star) \preceq (1 + t_n) H_\star.
    \end{align*}
    Hence, whenever $n \ge 4(K_2 + 2\sigma_H^2) \log{(2d/\delta)}$, we have
    \begin{align*}
        \frac12 H_\star \preceq H_n(\theta_\star) \preceq \frac32 H_\star.
    \end{align*}
\end{proof}

We then prove \Cref{prop:score}.
Recall $t_n$ from \eqref{eq:t_n}.
\begin{customprop}{\ref{prop:score}}
    Under \Cref{asmp:sub_gaussian,asmp:bernstein} with $r = 0$, if $n \ge 4(K_2 + 2\sigma_H^2) \log{(4d/\delta)}$, then we have $t_n \le 1/2$ and, with probability at least $1 - \delta$,
    \begin{align*}
        \norm{S_n(\theta_\star)}_{H_n^{-1}(\theta_\star)}^2
        \le \frac{d_\star}{n(1-t_n)} + C K_{1}^2 \log{(e / \delta)} \frac{\norm{\Omega_\star}_2}{n(1-t_n)}.
    \end{align*}
\end{customprop}

\begin{proof}
    Define two events
    \begin{align*}
        \calA := \left\{ \norm{S_n(\theta_\star)}_{H_\star^{-1}}^{2} \le \frac{d_\star}{n} + C K_{1}^2 \log{(2e / \delta)} \frac{\norm{\Omega_\star}_2}{n} \right\} \quad \mbox{and} \quad \calB := \left\{ (1-t_n) H_\star \preceq H_n(\theta_\star) \preceq (1+t_n) H_\star \right\}.
    \end{align*}
    According to \Cref{lem:score,lem:hessian}, we have $\Prob(\calA) \ge 1 - \delta/2$ and $\Prob(\calB) \ge 1 - \delta/2$.
    On the event $\calA \calB$, we have
    \begin{align*}
        \norm{\grad_n(\theta_\star)}_{H_n^{-1}(\theta_\star)}^2
        \le \frac1{1-t_n} \norm{S_n(\theta_\star)}_{H_\star^{-1}}^2
        \le \frac{d_\star}{n(1-t_n)} + C K_{1}^2 \log{(2e / \delta)} \frac{\norm{\Omega_\star}_2}{n(1-t_n)}.
    \end{align*}
    Since $\Prob(\calA \calB) \ge 1 - \Prob(\calA^c) - \Prob(\calB^c) \ge 1 - \delta$, we have, with probability at least $1 - \delta$,
    \begin{align*}
        \norm{S_n(\theta_\star)}_{H_n^{-1}(\theta_\star)}^2 \le \frac{d_\star}{n(1-t_n)} + C K_{1}^2 \log{(e / \delta)} \frac{\norm{\Omega_\star}_2}{n(1-t_n)}.
    \end{align*}
    If $n \ge 4(K_2 + 2\sigma_H^2) \log{(4d/\delta)}$, then $t_n \le 1/2$ and thus
    \begin{align*}
        \norm{\grad_n(\theta_\star)}_{H_n^{-1}(\theta_\star)}^2 \le \frac{2d_\star}{n} + C K_{1}^2 \log{(e / \delta)} \frac{\norm{\Omega_\star}_2}{n}.
    \end{align*}
\end{proof}

\subsection{Proof of the main theorems}
\label{sub:appendix:thm}

Before we prove the main theorem, we control the empirical Hessian as in \Cref{prop:emp_hess_est}.
A na\"ive approach is to invoke \Cref{lem:hessian} to bound $H_n(\theta)$ by $H_n(\theta_\star)$.
However, this would not work since the generalized self-concordance parameter of $L_n$, i.e., $n^{\nu/2-1} R$, is diverging as $n \rightarrow \infty$.
Hence, we use a covering number argument: 1) we take a covering with radius $O(n^{1-\nu/2})$; 2) we bound $H_n(\theta)$ by $H_n(\pi(\theta))$ where $\pi(\theta)$ is the projection of $\theta$ onto the covering. The factor $n^{1-\nu/2}$ in the radius will cancel out with the factor $n^{\nu/2-1}$ in the generalized self-concordance parameter; 3) we bound $H_n(\pi(\theta))$ by $H(\pi(\theta))$ using matrix concentration; 4) we bound $H(\pi(\theta))$ by $H(\theta_\star)$ where the generalized self-concordance parameter of $L$ is $R$.
Recall $t_n$ from \eqref{eq:t_n}, $\lambda_\star := \lambda_{\min}(H_\star)$ and $\lambda^\star := \lambda_{\max}(H_\star)$.
Let $\omega_\nu(\tau) := e^{\tau}$ if $\nu = 2$ and $(1 - \tau)^{-2/(\nu-2)}$ if $\nu > 2$.
\begin{align}\label{eq:R_nu_star}
    R_{\nu}^\star :=
    \begin{cases}
        \lambda_{\star}^{-1/2} R & \mbox{if } \nu = 2 \\
        (\nu/2 -1) \lambda_{\star}^{(\nu - 3)/2} R & \mbox{if } \nu \in (2, 3] \\
        (\nu/2 - 1) (\lambda^\star)^{(\nu - 3)/2} R & \mbox{if } \nu > 3.
    \end{cases}
\end{align}

\begin{customprop}{\ref{prop:emp_hess_est}}
    Fix $\varepsilon \in (0, K_\nu]$ and let $s_n := t_n\big( 3^{-d}[1.5 \omega_\nu(\varepsilon) n]^{d(1-\nu/2)} \delta/2 \big)$.
    Under \Cref{asmp:self_concordance,asmp:bernstein} with $r = K_\nu  / R_\nu^\star$, it holds that, with probability at least $1 - \delta$,
    \begin{align*}
        \frac1{2\omega_\nu^2(\varepsilon)} H_\star \preceq \frac{1-s_n}{\omega_\nu^2(\varepsilon)} H_\star \preceq H_n(\theta) \preceq (1 + s_n) \omega_\nu^2(\varepsilon) H_\star \preceq \frac32 \omega_\nu^2(\varepsilon) H_\star, \;\mbox{for all } \theta \in \Theta_{\varepsilon/R_\nu^\star}(\theta_\star),
    \end{align*}
    whenever $n \ge 4(K_2 + 2\sigma_H^2) \left\{ \log{(4d/\delta)} + d \log{[3(1.5 \omega_\nu(\varepsilon)n)^{\nu/2-1}]}\right\}$.
\end{customprop}

\begin{proof}
    We prove the result in the following steps.

    \emph{Step 1. Take a $\tau$-covering and relate $H_n(\theta)$ to $H_n(\bar \theta)$ for some $\bar \theta$ in the covering.}
    Let $\tau := \varepsilon / R_\nu^\star [1.5 \omega_\nu(\varepsilon) n]^{\nu/2-1}$. Take an $\tau$-covering $\calN_\tau$ of $\Theta_{\varepsilon/R_\nu^\star}(\theta_\star)$ w.r.t.~$\norm{\cdot}_{H_\star}$, and let $\pi(\theta)$ be the projection of $\theta$ onto $\calN_\tau$.
    Let
    \begin{align*}
        d_{n, \nu}(\theta_1, \theta_2) :=
        \begin{cases}
            n^{\nu/2-1} R \norm{\theta_2 - \theta_1}_2 & \mbox{if } \nu = 2 \\
            (\nu/2-1) n^{(\nu/2-1)} R \norm{\theta_2 - \theta_1}_2^{3 - \nu} \norm{\theta_2 - \theta_1}_{H_n(\theta_1)}^{\nu-2} & \mbox{otherwise}.
        \end{cases}
    \end{align*}
    By \Cref{lem:emp_risk_self_concordance} and \Cref{prop:hessian}, we have, for all $\theta \in \Theta_{\varepsilon/R_\nu^\star}(\theta_\star)$,
    \begin{align}\label{eq:emp_hess_sandwich}
        \frac1{\omega_\nu(d_{n,\nu}(\pi(\theta), \theta))} H_n(\pi(\theta)) \preceq H_n(\theta) \preceq \omega_\nu(d_{n,\nu}(\pi(\theta), \theta)) H_n(\pi(\theta)),
    \end{align}
    where it holds if $d_{n,\nu}(\pi(\theta), \theta) < 1$ for the case $\nu > 2$.
    
    \emph{Step 2. Relate $H_n(\theta)$ to $H_\star$ for all $\theta$ in the covering.} Fix an arbitrary $\theta \in \calN_\tau$. Following the same argument as \Cref{lem:hessian}, we have, with probability at least $1 - \delta$,
    \begin{align}\label{eq:concentration_Hn}
        (1-t_n) H(\theta) \preceq H_n(\theta) \preceq (1+t_n) H(\theta).
    \end{align}
    It follows from \Cref{asmp:self_concordance} and \Cref{lem:bound_d_nu} that
    \begin{align}\label{eq:pop_hess_bound}
        \frac1{\omega_\nu(R_\nu^\star \norm{\theta - \theta_\star}_{H_\star})} H_\star \preceq H(\theta) \preceq \omega_\nu(R_\nu^\star \norm{\theta - \theta_\star}_{H_\star}) H_\star,
    \end{align}
    since $R_\nu^\star \norm{\theta - \theta_\star}_{H_\star} \le \varepsilon \le K_\nu < 1$.
    By the monotonicity of $\omega_\nu$, we get
    \begin{align*}
        \frac1{\omega_\nu(\varepsilon)} H_\star \preceq H(\theta) \preceq \omega_\nu(\varepsilon) H_\star,
    \end{align*}
    and thus, with probability at least $1 - \delta$,
    \begin{align*}
        \frac{1-t_n(\delta/2)}{\omega_\nu(\varepsilon)} H_\star \preceq H_n(\theta) \preceq [1 + t_n(\delta/2)] \omega_\nu(\varepsilon) H_\star.
    \end{align*}
    Let $s_n := t_n\big( (\tau R_\nu^\star/3\varepsilon)^d \delta/2 \big)$ and
    \begin{align*}
        \calA := \left\{ \frac{1-s_n}{\omega_\nu(\varepsilon)} H_\star \preceq H_n(\pi(\theta)) \preceq (1+s_n) \omega_\nu(\varepsilon) H_\star, \mbox{ for all } \theta \in \Theta_{\varepsilon/R_{\nu}^\star}(\theta_\star) \right\}.
    \end{align*}
    Since $\abs{\calN_\tau} \le (3\varepsilon/\tau R_\nu^\star)^d$ \citep{ostrovskii2021finite}, by a union bound, we have $\Prob(\calA) \ge 1 - \delta$.
    
    \emph{Step 3. Combine the previous two steps.}
    On the event $\calA$, we have $H_n(\pi(\theta)) \preceq (1+s_n) \omega_\nu(\varepsilon) H_\star$ for all $\theta \in \Theta_{\varepsilon/R_\nu^\star}(\theta_\star)$.
    A similar argument as \Cref{lem:bound_d_nu} shows that
    \begin{align*}
        d_{n, \nu}(\pi(\theta), \theta) \le
        \begin{cases}
            \lambda_\star^{-1/2} R \tau & \mbox{if } \nu = 2 \\
            (\nu/2-1) \lambda_\star^{(\nu-3)/2} [(1+s_n)\omega_\nu(\varepsilon)]^{(\nu-2)/2} n^{\nu/2-1} R \tau & \mbox{if } \nu \in (2, 3] \\
            (\nu/2-1) (\lambda^\star)^{(\nu - 3)/2} [(1+s_n)\omega_\nu(\varepsilon)]^{(\nu-2)/2} n^{\nu/2-1} R \tau & \mbox{otherwise},
        \end{cases}
    \end{align*}
    which is equal to $[(1+s_n)\omega_\nu(\varepsilon)]^{\nu/2-1} n^{\nu/2-1}R_\nu^\star \tau$.
    When $n \ge 4(K_2 + 2\sigma_H^2) \left\{ \log{(4d/\delta)} + d \log{[3(1.5 \omega_\nu(\varepsilon)n)^{\nu/2-1}]}\right\}$, we have $s_n \le 1/2$, and thus
    substituting $\tau$ gives $d_{n, \nu}(\pi(\theta), \theta) \le \varepsilon \le K_\nu < 1$.
    Hence, by \eqref{eq:emp_hess_sandwich}, we obtain
    \begin{align*}
        \frac{1-s_n}{\omega_\nu^2(\varepsilon)} H_\star \preceq H_n(\theta) \preceq (1+s_n) \omega_\nu^2(\varepsilon) H_\star, \quad \mbox{for all } \theta \in \Theta_{\varepsilon/R_\nu^\star}(\theta_\star).
    \end{align*}
    on the event $\calA$.
\end{proof}

We give below the precise version of \Cref{thm:risk_bound_generalized}.
Recall $K_\nu$ and $R_\nu^\star$ from \Cref{cor:K_nu} and \eqref{eq:R_nu_star}.
\begin{customthm}{\ref{thm:risk_bound_generalized}}
    Let $\nu \in [2, 3)$.
    Under \Cref{asmp:self_concordance,asmp:sub_gaussian,asmp:bernstein} with $r = 0$, we have,
    whenever
    \begin{align*}
        n \ge \max\left\{ 4(K_2 + 2\sigma_H^2) \log{(4d/\delta)}, C\left[\frac{(R_\nu^\star)^2 K_1^2 d_\star \log{(e/\delta)}}{K_\nu^2}\right]^{1/(3-\nu)} \right\},
    \end{align*}
    the empirical risk minimizer $\theta_n$ uniquely exists and satisfies, with probability at least $1 - \delta$,
    \begin{align*}
        \norm{\theta_n - \theta_\star}^2_{H_\star} \le  \frac{16 d_\star}{n} + C K_{1}^2 \log{(e / \delta)} \frac{\norm{\Omega_\star}_2}{n}.
    \end{align*}
\end{customthm}

\begin{proof}
    Similar to the proof of \Cref{prop:score}, we define two events
    \begin{align*}
        \begin{split}
            \calA := \left\{ \norm{S_n(\theta_\star)}_{H_\star^{-1}}^{2} \le \frac{d_\star}n + CK_{1}^2 \log{(2e / \delta)} \frac{\norm{\Omega_\star}_2}{n} \right\} \quad \mbox{and} \quad \calB := \left\{ \frac12 H_\star \preceq H_n(\theta_\star) \preceq \frac32 H_\star \right\}.
        \end{split}
    \end{align*}
    In the following, we let
    \begin{align*}
        n \gtrsim \max\left\{ 4(K_2 + 2\sigma_H^2) \log{(4d/\delta)}, \left[\frac{(R_\nu^\star)^2 K_1^2 d_\star \log{(e/\delta)}}{K_\nu^2}\right]^{1/(3-\nu)} \right\}.
    \end{align*}
    Following the same argument as \Cref{prop:score}, we have $\Prob(\calA \calB) \ge 1 - \delta$ and
    \begin{align*}
        \norm{\grad_n(\theta_\star)}_{H_n^{-1}(\theta_\star)}^2 \le \frac{2 d_\star}{n} + C K_{1}^2 \log{(e / \delta)} \frac{\norm{\Omega_\star}_2}{n} \le C K_{1}^2 \log{(e / \delta)} \frac{d_\star}{n}.
    \end{align*}
    Now, it suffices to prove, on the event $\calA \calB$,
    \begin{align*}
        \norm{\theta_n - \theta_\star}_{H_\star}^2 \le \frac{16 d_\star}{n} + C K_{1}^2 \log{(e / \delta)} \frac{\norm{\Omega_\star}_2}{n}.
    \end{align*}
    
    Recall $R_{n, \nu}^\star$ and $R_{\nu}^\star$ from \eqref{eq:R_n_nu_star} and \eqref{eq:R_nu_star}.
    It is straightforward to check that $R_{n, \nu}^\star \le \sqrt{2} n^{\nu/2-1} R_\nu^\star$ for all $\nu \in [2, 3]$.
    Consequently, it holds that
    \begin{align*}
        R_{n,\nu}^\star \norm{S_n(\theta_\star)}_{H_n^{-1}(\theta_\star)} \lesssim R_\nu^\star n^{(\nu - 3)/2} \sqrt{K_{1}^2 \log{(e / \delta)} d_\star} \le K_\nu
    \end{align*}
    since $n^{3-\nu} \gtrsim (R_\nu^\star)^2 K_1^2 \log{(e / \delta)} d_\star/ K_\nu^2$.
    As a result, by \Cref{prop:localization}, we have that $\theta_n$ uniquely exists and satisfies
    \begin{align*}
        \norm{\theta_n - \theta_\star}_{H_n(\theta_\star)} \le 4 \norm{S_n(\theta_\star)}_{H_n^{-1}(\theta_\star)},
    \end{align*}
    and thus, using the event $\calB$,
    \begin{align*}
        \norm{\theta_n - \theta_\star}_{H_\star}^2
        \le 2\norm{\theta_n - \theta_\star}_{H_n(\theta_\star)}^2
        \le \frac{16 d_\star}{n} + C K_{1}^2 \log{(e / \delta)} \frac{\norm{\Omega_\star}_2}{n}.
    \end{align*}
\end{proof}

We give below the precise version of \Cref{thm:conf_set}.
\begin{customthm}{\ref{thm:conf_set}}
    Let $\nu \in [2, 3)$ and $r_n := \sqrt{CK_1^2 \log{(e/\delta)} d_\star / n}$.
    Suppose the same assumptions in \Cref{thm:risk_bound_generalized} hold true.
    Furthermore, suppose that \Cref{asmp:bernstein} holds with $r = K_\nu / R_\nu^\star$.
    Let
    \begin{align*}
        \calC_n(\delta) := \left\{\theta \in \Theta: \norm{\theta_n - \theta}_{H_n(\theta_n)}^2 \le 24 \omega_\nu^2(r_n R_\nu^\star) \frac{d_\star}{n} + C K_1^2 \omega_\nu^2(r_n R_\nu^\star) \log{(e/\delta)} \frac{\norm{\Omega_\star}_2}{n} \right\}.
    \end{align*}
    Then we have $\Prob(\theta_\star \in \calC_n(\delta)) \ge 1 - \delta$ whenever $n$ satisfies
    \begin{align*}
        n \ge C \max\left\{(K_2 + \sigma_H^2)\left[\log(2d/\delta) + d\log{(\omega_{\nu}(K_\nu) n)} \right], \left[ \frac{(R_\nu^\star)^2 K_1^2 d_\star \log{(e/\delta)}}{K_\nu^2} \right]^{1/(3-\nu)} \right\}.
    \end{align*}
    Here $C$ is an absolute constant which may change from line to line.
\end{customthm}

\begin{proof}
    We start by defining some events:
    \begin{align}\label{eq:def_events}
        \begin{split}
            \calA &:= \left\{ \norm{S_n(\theta_\star)}_{H_\star^{-1}}^{2} \le \frac{d_\star}n + C K_{1}^2 \log{(3e / \delta)} \frac{\norm{\Omega_\star}_2}{n} \right\} \\
            \calB &:= \left\{ \frac12 H_\star \preceq H_n(\theta_\star) \preceq \frac32 H_\star \right\} \\
            \calC &:= \left\{ \frac1{2 \omega_\nu^2(r_n R_\nu^\star)} H_\star \preceq H_n(\theta) \preceq \frac{3}{2} \omega_\nu^2(r_n R_\nu^\star) H_\star, \quad \mbox{for all } \theta \in \Theta_{r_n}(\theta_\star) \right\}.
        \end{split}
    \end{align}
    In the following, we let
    \begin{align*}
        n \ge C \max\left\{(K_2 + \sigma_H^2)\left[\log(2d/\delta) + d\log{(\omega_{\nu}(K_\nu) n)} \right], \left[ \frac{(R_\nu^\star)^2 K_1^2 d_\star \log{(e/\delta)}}{K_\nu^2} \right]^{1/(3-\nu)} \right\}.
    \end{align*}
    It then follows that $r_n R_{\nu}^\star \le K_\nu$.
    According to \Cref{lem:score}, \Cref{lem:hessian}, and \Cref{prop:emp_hess_est} (with $\varepsilon = r_n R_\nu^\star$), it holds that $\Prob(\calA) \ge 1 - \delta/3$, $\Prob(\calB) \ge 1 - \delta/3$, and $\Prob(\calC) \ge 1 - \delta/3$.
    This implies that $\Prob(\calA \calB \calC) \ge 1 - \delta$.
    Now, it suffices to prove, on the event $\calA \calB \calC$,
    \begin{align*}
        \norm{\theta_n - \theta_\star}_{H_n(\theta_n)}^2 \le 24 \omega_\nu^2(r_n R_\nu^\star) \frac{d_\star}{n} + C K_1^2 \omega_\nu^2(r_n R_\nu^\star) \log{(e/\delta)} \frac{\norm{\Omega_\star}_2}{n}.
    \end{align*}
    
    Following the same argument as \Cref{thm:risk_bound_generalized}, we obtain
    \begin{align*}
        \norm{\theta_n - \theta_\star}_{H_\star}^2
        \le 2\norm{\theta_n - \theta_\star}_{H_n(\theta_\star)}^2
        \le \frac{16d_\star}{n} + CK_1^2 \log{(e/\delta)} \frac{\norm{\Omega_\star}_2}{n}
        \le r_n^2.
    \end{align*}
    Therefore, using the event $\calC$, we have
    \begin{align*}
        \norm{\theta_n - \theta_\star}_{H_n(\theta_n)}^2
        \le \frac32 \omega_\nu^2(r_n R_\nu^\star) \norm{\theta_n - \theta_\star}_{H_\star}^2
        \le 24 \omega_\nu^2(r_n R_\nu^\star) \frac{d_\star}{n} + C K_1^2 \omega_\nu^2(r_n R_\nu^\star) \log{(e/\delta)} \frac{\norm{\Omega_\star}_2}{n},
    \end{align*}
    which completes the proof.
\end{proof}

\subsection{Consistency of $d_n$}
\label{sub:appendix:consist_dn}

Now we are ready to prove \Cref{prop:d_n}.
Recall $t_n$ from \eqref{eq:t_n} and $r_n$ from \Cref{thm:conf_set}.
\begin{customprop}{\ref{prop:d_n}}
    Let $\nu \in [2, 3)$ and $s_n := CMr_n + CK_1^2(1 + Mr_n) (d/n) \log{(Mnr_n/\delta)}$.
    Under Asms.~\ref{asmp:self_concordance}, \ref{asmp:subG_local}, \ref{asmp:bernstein}, and \ref{asmp:lip} with $r = K_\nu/R_\nu^\star$, it holds that, with probability at least $1 - \delta$,
    \begin{align*}
        \frac{1 - t_n}{\omega_\nu^2(r_n R_\nu^\star)(1 + s_n)} d_n \le d_\star \le \frac{(1 + t_n)\omega_\nu^2(r_n R_\nu^\star)}{1 - s_n} d_n
    \end{align*}
    whenever $n$ satisfies
    \begin{align*}
        n \ge C \max\left\{ (K_2 + \sigma_H^2 + K_1^2) \left[ \log{(2d/\delta)} + d\log{(\omega_{\nu}(K_\nu) n/\delta)} \right], \left[ (M + R_\nu^\star/K_\nu)^2 K_1^2 d_\star \log{(e/\delta)} \right]^{1/(3-\nu)} \right\}.
    \end{align*}
\end{customprop}

\begin{proof}
    Let $\tau := \delta / (Mn)$.
    Take a $\tau$-covering of $\calN_\tau$ of $\Theta_{r_n}(\theta_\star)$ w.r.t.~$\norm{\cdot}_{H_\star}$, and let $\pi(\theta)$ be the projection of $\theta$ onto $\calN_\tau$.
    For simplicity of the notation, we define
    \begin{align*}
        \norm{A}_{B} := \norm{B^{1/2} A B^{1/2}}
    \end{align*}
    for a symmetric matrix $A$ and a psd matrix $B$.
    We start by defining some events.
    Let
    \begin{equation*}
        \begin{split}
            \calA &:= \left\{ \norm{S_n(\theta_\star)}_{H_\star^{-1}}^{2} \lesssim \frac1n K_{1}^2 \log{(5e / \delta)} d_\star \right\} \\
            \calB &:= \left\{ (1-t_n) H_\star \preceq H_n(\theta_\star) \preceq (1+t_n) H_\star \right\} \\
            \calC &:= \left\{ \frac{1 - t_n}{\omega_\nu^2(r_n R_\nu^\star)} H_\star \preceq H_n(\theta) \preceq (1 + t_n) \omega_\nu^2(r_n R_\nu^\star) H_\star, \quad \mbox{for all } \theta \in \Theta_{r_n}(\theta_\star) \right\} \\
            \calD &:= \left\{ \sup_{\theta \in \Theta_{r_n}(\theta_\star)} \norm{G_n(\theta) - G_n(\pi(\theta))}_{G_\star^{-1}} \le 5M\tau/\delta \right\} \\
            \calE &:= \left\{ \sup_{\theta \in \Theta_{r_n}(\theta_\star)} \norm{G_n(\pi(\theta)) - G(\pi(\theta))}_{G_\star^{-1}} \lesssim K_1^2 (1 + Mr_n) h\left( \frac{d\log{(36 r_n/\tau)} + \log{(10/\delta)}}{n} \right) \right\},
        \end{split}
    \end{equation*}
    where $h(t) := \max\{t^2, t\}$.
    In the following, we let
    \begin{align}\label{eq:n_constraint}
        n \ge C \max\left\{ (K_2 + \sigma_H^2 + K_1^2) \left[ \log{(2d/\delta)} + d\log{(\omega_{\nu}(K_\nu) n/\delta)} \right], \left[ (M + R_\nu^\star/K_\nu)^2 K_1^2 d_\star \log{(e/\delta)} \right]^{1/(3-\nu)} \right\}.
    \end{align}
    It then follows that $t_n \le 1/2$, $r_n \le K_\nu / R_\nu^\star = r$ and $s_n < 1$.
    According to \Cref{lem:score}, \Cref{lem:hessian}, and \Cref{prop:emp_hess_est}, it holds that $\Prob(\calA) \ge 1 - \delta/5$, $\Prob(\calB) \ge 1 - \delta/5$, and $\Prob(\calC) \ge 1 - \delta/5$.
    In the following, we prove the claim in three steps.
    
    \emph{Step 1. Control the probability of $\calD$.}
    By Markov's inequality, it holds that
    \begin{align*}
        \Prob(\calD^c)
        &\le \frac{\delta}{5M\tau} \Expect\left[ \sup_{\theta \in \Theta_{r_n}(\theta_\star)} \norm{G_n(\theta) - G_n(\pi(\theta))}_{G_\star^{-1}} \right]
        \txtover{Jensen's}{\le} \frac{\delta}{5M\tau} \sup_{\theta \in \Theta_{r_n}(\theta_\star)} \norm{G(\theta) - G(\pi(\theta))}_{G_\star^{-1}}.
    \end{align*}
    According to \Cref{asmp:lip}, we have
    \begin{align}\label{eq:lip_reformulation}
        M \norm{\theta_1 - \theta_2}_{H_\star} \ge \Expect[\norm{G(\theta_1; Z) - G(\theta_2; Z)}_{G_\star^{-1}}], \quad \mbox{for all } \theta_1, \theta_2 \in \Theta_r(\theta_\star).
    \end{align}
    It follows from Jensen's inequality that
    \begin{align}\label{eq:lip_jensen}
        M \norm{\theta_1 - \theta_2}_{H_\star} \ge \norm{G(\theta_1) - G(\theta_2)}_{G_\star^{-1}}, \quad \mbox{for all } \theta_1, \theta_2 \in \Theta_r(\theta_\star).
    \end{align}
    As a result,
    \begin{align*}
        \Prob(\calD^c) \le \frac{\delta}{5\tau} \norm{\theta - \pi(\theta)}_{H_\star} \le \frac{\delta}{5}.
    \end{align*}
    
    \emph{Step 2. Control the probability of $\calE$.}
    According to \citet[Exercise 4.4.3]{vershynin2018high}, we have
    \begin{align}\label{eq:matrix_norm_cover}
        \norm{G_n(\pi(\theta)) - G(\pi(\theta))}_{G_\star^{-1}} \le \frac12 \sup_{v \in \calV_{1/4}} \abs{v^\top G_\star^{-1/2} [G_n(\pi(\theta)) - G(\pi(\theta))] G_\star^{-1/2} v},
    \end{align}
    where $\calV_{1/4}$ is a $1/4$-covering of the unit ball in $\reals^d$.
    Note that
    \begin{align*}
        v^\top G_\star^{-1/2} (G_n(\pi(\theta)) - G(\pi(\theta))) G_\star^{-1/2} v = \frac1n \sum_{i=1}^n [W_i - \Expect[W_i]],
    \end{align*}
    where $W_i := [v^\top G_\star^{-1/2} S(\pi(\theta); Z_i)]^2$.
    Let $\bar v := G(\pi(\theta))^{1/2} G_\star^{-1/2} v$.
    By \Cref{asmp:subG_local},
    \begin{align*}
        \norm{v^\top G_\star^{-1/2} S(\pi(\theta); Z_i)}_{\psi_2}
        &= \norm{\bar v^\top G(\pi(\theta))^{-1/2} S(\pi(\theta); Z_i)}_{\psi_2} \\
        &\le \norm{\bar v}_2 K_1 \le \norm{G(\pi(\theta))^{1/2} G_\star^{-1/2}} K_1.
    \end{align*}
    Since $\pi(\theta) \in \Theta_{r_n}(\theta_\star) \subset \Theta_{r}(\theta_\star)$, it follows from \eqref{eq:lip_jensen} that 
    \begin{align*}
        \norm{G_\star^{-1/2} G(\pi(\theta)) G_\star^{-1/2} - I_d} \le M \norm{\pi(\theta) - \theta_\star}_{H_\star} \le M r_n.
    \end{align*}
    and thus
    \begin{align*}
        \norm{v^\top G_\star^{-1/2} S(\pi(\theta); Z_i)}_{\psi_2} \le \sqrt{1 + Mr_n} K_1.
    \end{align*}
    This implies, by \citet[Lemma 2.7.6]{vershynin2018high}, $W_i$ is sub-Exponential with $\norm{W_i}_{\psi_1} \le K_1^2 (1 + Mr_n)$.
    It then follows from the Bernstein inequality that
    \begin{align*}
        \Prob\left( \abs{\frac1n \sum_{i=1}^n [W_i - \Expect[W_i]]} > t \right) \le 2 \exp\left(- c \min\left\{\frac{t^2}{K_1^4 (1 + Mr_n)^2}, \frac{t}{K_1^2 (1 + Mr_n)} \right\} \right).
    \end{align*}
    Since $\abs{\calN_\tau} \le (3r_n/\tau)^d$ and $\calV_{1/4} \le 12^d$,
    by a union bound, we get
    \begin{align*}
        &\quad \Prob\left( \frac12 \sup_{\theta \in \Theta_{r_n}(\theta_\star)} \sup_{v \in \calV_{1/4}} \abs{v^\top G_\star^{-1/2} [G_n(\pi(\theta)) - G(\pi(\theta))] G_\star^{-1/2} v} > t \right) \\
        &\le 2 \abs{\calN_\tau} \abs{\calV_{1/4}} \exp\left(- c \min\left\{\frac{4t^2}{K_1^4 (1 + Mr_n)^2}, \frac{2t}{K_1^2 (1 + Mr_n)} \right\} \right) \\
        &\le 2 (36r_n/\tau)^d \exp\left(- c \min\left\{\frac{4t^2}{K_1^4 (1 + Mr_n)^2}, \frac{2t}{K_1^2 (1 + Mr_n)} \right\} \right).
    \end{align*}
    Hence, it follows from \eqref{eq:matrix_norm_cover} that $\Prob(\calE^c) \le \delta/5$.
    
    \emph{Step 3. Prove the bound on the event $\calA \calB \calC \calD \calE$.}
    Following the same argument as \Cref{thm:risk_bound_generalized}, we obtain
    \begin{align}\label{eq:local}
        \norm{\theta_n - \theta_\star}_{H_\star} \lesssim \norm{\theta_n - \theta_\star}_{H_n(\theta_\star)} \lesssim n^{-1/2} \sqrt{K_{1}^2 \log{(e / \delta)} d_\star} = r_n.
    \end{align}
    Using the event $\calC$, we have
    \begin{align*}
        \frac1{(1 + t_n)\omega_\nu^2(r_n R_\nu^\star)} H_n(\theta_n) \preceq H_\star \preceq \frac{\omega_\nu^2(r_n R_\nu^\star)}{1 - t_n} H_n(\theta_n),
    \end{align*}
    and thus
    \begin{equation}\label{eq:d_star_first_bound}
    \begin{split}
        d_\star &\le (1+t_n) \omega_\nu^2(r_n R_\nu^\star) \Tr\left( H_n(\theta_n)^{-1/2} G_\star H_n(\theta_n)^{-1/2} \right) \\
        d_\star &\ge \frac{1-t_n}{\omega_\nu^2(r_n R_\nu^\star)} \Tr\left( H_n(\theta_n)^{-1/2} G_\star H_n(\theta_n)^{-1/2} \right).
    \end{split}
    \end{equation}
    Now it remains to control
    \begin{align*}
        \norm{G_n(\theta_n) - G_\star}_{G_\star^{-1}} \le \norm{G(\theta_n) - G_\star}_{G_\star^{-1}} + \norm{G_n(\theta_n) - G(\theta_n)}_{G_\star^{-1}}.
    \end{align*}
    We first control $\norm{G(\theta_n) - G_\star}_{G_\star^{-1}}$.
    It follows from \eqref{eq:lip_jensen} and \eqref{eq:local} that
    \begin{align*}
        \norm{G(\theta_n) - G_\star}_{G_\star^{-1}}
        \le M \norm{\theta_n - \theta_\star}_{H_\star}
        \lesssim M r_n.
    \end{align*}
    We then control $\norm{G_n(\theta_n) - G(\theta_n)}_{G_\star^{-1}}$.
    By \eqref{eq:local}, we have
    \begin{align*}
        \norm{G_n(\theta_n) - G(\theta_n)}_{G_\star^{-1}}
        \le \sup_{\theta \in \Theta_{r_n}(\theta_\star)} \norm{G_n(\theta) - G(\theta)}_{G_\star^{-1}}.
    \end{align*}
    It then follows from the triangle inequality that
    \begin{align*}
        \sup_{\theta \in \Theta_{r_n}(\theta_\star)} \norm{G_n(\theta) - G(\theta)}_{G_\star^{-1}} \le A_1 + A_2 + A_3,
    \end{align*}
    where
    \begin{align*}
        A_1 &:= \sup_{\theta \in \Theta_{r_n}(\theta_\star)} \norm{G(\pi(\theta)) - G(\theta)}_{G_\star^{-1}} \\
        A_2 &:= \sup_{\theta \in \Theta_{r_n}(\theta_\star)} \norm{G_n(\pi(\theta)) - G(\pi(\theta))}_{G_\star^{-1}} \\
        A_3 &:= \sup_{\theta \in \Theta_{r_n}(\theta_\star)} \norm{G_n(\theta) - G_n(\pi(\theta))}_{G_\star^{-1}}.
    \end{align*}
    
    To control $A_1$, note that, for all $\theta \in \Theta_{r_n}(\theta_\star)$,
    \begin{align*}
        \norm{G(\pi(\theta)) - G(\theta)}_{G_\star^{-1}} \txtover{\eqref{eq:lip_jensen}}{\le} M \norm{\pi(\theta) - \theta}_{H_\star}
        \le M \tau.
    \end{align*}
    Consequently, we obtain $A_1 \le M \tau$.
    To control $A_2$, we use the event $\calE$ to obtain
    \begin{align*}
        A_2 \lesssim K_1^2 (1 + Mr_n) h\left( \frac{d\log{(36 r_n/\tau)} + \log{(10/\delta)}}{n} \right).
    \end{align*}
    To control $A_3$, we use the event $\calD$ to obtain $A_3 \le 5M\tau/\delta$.
    Therefore,
    \begin{align*}
        \norm{G_n(\theta_n) - G_\star}_{G_\star^{-1}}
        &\le CM r_n + M\tau + 5M\tau / \delta + CK_1^2 (1 + Mr_n) h\left( \frac{d\log{(36 r_n/\tau)} + \log{(10/\delta)}}{n} \right) \\
        &= CM r_n + \frac{5+\delta}{n} + CK_1^2 (1 + Mr_n) h\left( \frac{d\log{(36M n r_n/\delta)} + \log{(10/\delta)}}{n} \right).
    \end{align*}
    This yields that
    \begin{align*}
        (1 - s_n) G_\star \preceq G_n(\theta_n) \preceq (1 + s_n) G_\star,
    \end{align*}
    and thus
    \begin{align*}
        \frac{1 - t_n}{\omega_\nu^2(r_n R_\nu^\star)(1 + s_n)} d_n \le d_\star \le \frac{(1 + t_n)\omega_\nu^2(r_n R_\nu^\star)}{1 - s_n} d_n.
    \end{align*}
\end{proof}

\subsection{Effective dimension}
\label{sub:appendix:effect_dim}

To gain a better understanding on the effective dimension $d_\star$, we summarize it in \Cref{tab:decay} under different regimes of eigendecay, assuming that $G_\star$ and $H_\star$ share the same eigenvectors.

\begin{table}[t]
    \caption{Comparison between the effective dimension $d_\star$ and the parameter dimension $d$ in different regimes of eigendecays of $G_\star$ and $H_\star$ assuming they share the same eigenvectors.}
    \label{tab:decay}
    \centering
    \renewcommand{\arraystretch}{1.2}
    \begin{tabular}{lccccc}
        \addlinespace[0.4em]
        \toprule
        & \multicolumn{2}{c}{\textbf{Eigendecay}} & \multicolumn{2}{c}{\textbf{Dimension Dependency}} & \textbf{Ratio} \\
        & $G_\star$ & $H_\star$ & $d_\star$ & $d$ & $d_\star/d$ \\
        \midrule
        Poly-Poly & $i^{-\alpha}$ & $i^{-\beta}$ & $d^{(\beta - \alpha + 1) \vee 0}$ & $d$ & $d^{(\beta - \alpha)\vee(-1)}$ \\
        Poly-Exp & $i^{-\alpha}$ & $ e^{-\nu i}$ & $d^{1-\alpha} e^{\nu d}$ & $d$ & $d^{-\alpha} e^{\nu d}$ \\
        Exp-Poly & $e^{-\mu i}$ & $i^{-\beta}$ & $1$ & $d$ & $d^{-1}$ \\
        Exp-Exp & $e^{-\mu i}$ & $e^{-\nu i}$ & \begin{tabular}{@{}c@{}} \qquad $d$ \mbox{ if }  $\mu = \nu$ \\ \qquad $1$ \mbox{ if } $\mu > \nu$ \\ $e^{(\nu - \mu) d}$ \mbox{ if } $\mu < \nu$ \end{tabular} & $d$ & \begin{tabular}{@{}c@{}} $1$ \mbox{ if }  $\mu = \nu$ \\ $d^{-1}$ \mbox{ if } $\mu > \nu$ \\ $d^{-1} e^{(\nu - \mu) d}$ \mbox{ if } $\mu < \nu$ \end{tabular} \\
        \bottomrule
    \end{tabular}
\end{table}

\section{Examples and applications}
\label{sec:example}
We give the derivations for the examples considered in \Cref{sub:examples} and prove the results for goodness-of-fit testing in \Cref{sub:goodness}.

\subsection{Examples}
\label{sub:append:examples}

\begin{example}[Generalized linear models]
    Let $Z := (X, Y)$ be a pair of input and output, where $X \in \calX \subset \reals^\tau$ and $Y \in \calY \subset \reals$.
    Let $t: \calX \times \calY \rightarrow \reals^d$ and $\mu$ be a measure on $\calY$.
    Consider the statistical model
    \begin{align*}
        p(y \mid x) \sim \frac{\exp(\ip{\theta, t(x, y)})}{\int \exp(\ip{\theta, t(x, \bar y)}) \D \mu(\bar y)} \D \mu(y)
    \end{align*}
    with $\norm{t(x, Y)}_2 \le M$ a.s.~under $p(y \mid x)$ for all $x$.
    It induces the loss function
    \begin{align*}
        \score(\theta; z) := -\ip{\theta, t(x, y)} + \log{\int \exp(\ip{\theta, t(x, \bar y)}) \D \mu(\bar y)}.
    \end{align*}
    
    We first verify \Cref{asmp:self_concordance}, i.e., show that it is generalized self-concordant for $\nu = 2$ and $R = 2M$.
    We denote by $\Expect_{Y\mid x}$ the expectation w.r.t.~$p(y \mid x)$.
    Note that $\log{\int \ip{\theta, t(x, \bar y)} \D \mu(\bar y)}$ is the cumulant generating function.
    It follows from some computation that
    \begin{align*}
        D_\theta \score(\theta; z) [u] &= -\ip{u, t(x, y)} + \Expect_{Y\mid x}\ip{u, t(x, Y)} \\
        D_\theta^2 \score(\theta; z) [u, u] &= \Expect_{Y \mid x}[\ip{u, t(x, Y)}^2] - [\Expect_{Y \mid x}\ip{u, t(x, Y)}]^2 \\
        D_\theta^3 \score(\theta; z)[u, u, v] &= \Expect_{Y \mid x}[\ip{u, t(x, Y)}^2 \ip{v, t(x, Y}] - \Expect_{Y \mid x}[\ip{u, t(x, Y)}^2] \Expect_{Y \mid x}\ip{v, t(x, Y)} \\
        &\quad - 2\Expect[\ip{u, t(x, Y)} \ip{v, t(x, Y)}] \Expect[\ip{u, t(x, Y)}] - 2[\Expect\ip{u, t(x, Y)}]^2 \Expect\ip{v, t(x, Y)}.
    \end{align*}
    As a result,
    \begin{align*}
        \abs{D_\theta^3 \score(\theta; z)[u, u, v]}
        &= \abs{\Expect_{Y \mid x}\left\{ \left[ \ip{u, t(x, Y)} - \Expect_{Y\mid x}\ip{u, t(x, Y)} \right]^2 \left[ \ip{v, t(x, Y)} - \Expect_{Y\mid x}\ip{v, t(x, Y)} \right] \right\}} \\
        &\le 2M \norm{v}_2 \Expect_{Y \mid x}\left\{ \left[ \ip{u, t(x, Y)} - \Expect_{Y\mid x}\ip{u, t(x, Y)} \right]^2 \right\}, \quad \mbox{by } \norm{t(x, Y)}_2 \txtover{a.s.}{\le} M \\
        &= 2M \norm{v}_2 D_\theta^2 \ell(\theta; z)[u, u],
    \end{align*}
    which completes the proof.
    
    We then verify \Cref{asmp:sub_gaussian} and \Cref{asmp:subG_local}.
    By \Cref{lem:bounded_subG}, it suffices to show that $\norm{S(\theta_\star; Z)}_2$ is a.s.~bounded.
    In fact,
    \begin{align*}
        S(\theta_\star; z) = -t(x, y) + \Expect_{p_{\theta_\star}(Y \mid x)}[t(x, Y)].
    \end{align*}
    Since $\abs{t(X, Y)}_2 \txtover{a.s.}{\le} M$, we get $\norm{S(\theta_\star; Z)}_2 \txtover{a.s.}{\le} 2M$ and thus the claim follows.
    \Cref{asmp:subG_local} can be verified similarly.
    
    Next, we verify \Cref{asmp:bernstein}.
    According to \Cref{lem:bounded_bernstein}, it is enough to prove that $\norm{H(\theta; Z)}_2$ is a.s.~bounded.
    In fact,
    \begin{align*}
        H(\theta; z) = \Expect_{Y \mid x}[t(x, Y) t(x, Y)^\top] - \Expect_{Y \mid x}[t(x, Y)] \Expect_{Y \mid x}[t(x, Y)]^\top.
    \end{align*}
    Since $\norm{t(X, Y) t(X, Y)^\top}_2 \le \norm{t(X, Y)}_2^2 \txtover{a.s.}{\le} M^2$, it follows that $\norm{H(\theta, Z)}_2 \txtover{a.s.}{\le} M^2$.
    
    Finally, we verify \Cref{asmp:lip}.
    It suffices to show that $\norm{G(\theta_1; Z) - G(\theta_2; Z)}_2 / \norm{\theta_1 - \theta_2}_2$ is a.s.~bounded.
    Note that
    \begin{align*}
        G(\theta_1; z) - G(\theta_2; z)
        &= \Expect_{p_{\theta_1}(Y \mid x)}[t(x, Y)] \Expect_{p_{\theta_1}(Y \mid x)}[t(x, Y)]^\top - \Expect_{p_{\theta_2}(Y \mid x)}[t(x, Y)] \Expect_{p_{\theta_2}(Y \mid x)}[t(x, Y)]^\top \\
        &\quad - 2t(x, y)\left\{ \Expect_{p_{\theta_1}(Y \mid x)}[t(x, Y)] - \Expect_{p_{\theta_2}(Y \mid x)}[t(x, Y)] \right\}^\top.
    \end{align*}
    For the second term, we have
    \begin{align*}
        \norm{-2t(x, y)\left\{ \Expect_{p_{\theta_1}(Y \mid x)}[t(x, Y)] - \Expect_{p_{\theta_2}(Y \mid x)}[t(x, Y)] \right\}^\top}_2
        \le 2\norm{t(x, y)}_2 \norm{\Expect_{p_{\theta_1}(Y \mid x)}[t(x, Y)] - \Expect_{p_{\theta_2}(Y \mid x)}[t(x, Y)]}_2.
    \end{align*}
    Note that
    \begin{align*}
        \Expect_{p_{\theta_1}(Y \mid x)}[t(x, Y)] - \Expect_{p_{\theta_2}(Y \mid x)}[t(x, Y)]
        &= \frac{\int t(x, y) \exp(\ip{\theta_1, t(x, y)}) \D \mu(y)}{\int \exp(\ip{\theta_1, t(x, y)}) \D \mu(y)} - \frac{\int t(x, y) \exp(\ip{\theta_2, t(x, y)}) \D \mu(y)}{\int \exp(\ip{\theta_2, t(x, y)}) \D \mu(y)}.
    \end{align*}
    Since $\abs{\ip{\theta, t(X, Y)}} \txtover{a.s.}{\le} [\norm{\theta - \theta_\star}_2 + \norm{\theta_\star}_2]M \le [\lambda_\star^{-1/2} r + \norm{\theta_\star}_2] M$ for all $\theta \in \Theta_{r}(\theta_\star)$, it holds that $\int \exp(\ip{\theta, t(X, y)}) \D \mu(y) \txtover{a.s.}{\ge} c$ for some $c > 0$ and $\theta \in \{\theta_1, \theta_2\}$.
    Now it remains to control
    \begin{align*}
        A_1 &:= \Big \Vert \int t(x, y) \exp(\ip{\theta_1, t(x, y)}) \D \mu(y) \int \exp(\ip{\theta_2, t(x, y)}) \D \mu(y) \\
        &\qquad - \int t(x, y) \exp(\ip{\theta_2, t(x, y)}) \D \mu(y) \int \exp(\ip{\theta_1, t(x, y)}) \D \mu(y) \Big \Vert_2.
    \end{align*}
    By the triangle inequality, we get $A_1 \le B_1 + B_2$ where
    \begin{align*}
        B_1 &:= \norm{\left[\int t(x, y) \exp(\ip{\theta_1, t(x, y)}) \D \mu(y) - \int t(x, y) \exp(\ip{\theta_2, t(x, y)}) \D \mu(y) \right] \int \exp(\ip{\theta_2, t(x, y)}) \D \mu(y)}_2 \\
        B_2 &:= \norm{\int t(x, y) \exp(\ip{\theta_2, t(x, y)}) \D \mu(y) \left[ \int \exp(\ip{\theta_2, t(x, y)}) \D \mu(y) - \int \exp(\ip{\theta_1, t(x, y)}) \D \mu(y) \right]}_2.
    \end{align*}
    Since $\abs{\ip{\theta_2, t(X, Y)}}$ and $d$ is a.s.~bounded, 
\end{example}

\myparagraph{Remark}
As a special case, the negative log-likelihood of the softmax regression with $\calX \subset \{x \in \reals^\tau: \norm{x} \le M\}$ and $\calY = \{1, \dots, K\}$ is generalized self-concordant with $\nu = 2$ and $R = 2M$.
In fact, the statistical model of the softmax regression is
\begin{align*}
    p(y = k \mid x) \sim \frac{\exp{\ip{w_k, x}}}{\sum_{j=1}^K \exp{\ip{w_j, x}}}.
\end{align*}
Define $\theta^\top := (w_1^\top, \dots, w_K^\top)$ and $t(x, y)^\top := (0_\tau^\top, \dots, x^\top, \dots, 0_\tau^\top)$ whose elements from $(y-1)\tau + 1$ to $y\tau$ are given by $x^\top$ and 0 elsewhere.
Then we have
\begin{align*}
    p(y=k \mid x) \sim \frac{\exp{\ip{\theta, t(x, k)}}}{\sum_{y=1}^K \exp{\ip{\theta, t(x, y)}}}.
\end{align*}
The claim then follows from the example above and $\norm{t(x, Y)}_2 = \norm{x}_2 \le M$.

\myparagraph{Remark}
The conditional random fields \citep{lafferty2001conditional} also fall into the category of generalized linear models.
For simplicity, we consider a conditional random field on a chain, i.e., for $x = (x_t)_{t=1}^T$ and $y = (y_t)_{t=1}^T$,
\begin{align*}
    p(y \mid x) \propto \exp\left\{ \sum_{t=1}^{T-1} \lambda_t f_t(x, y_t, y_{t+1}) + \sum_{t=1}^T \mu_t g_t(x, y_t) \right\}.
\end{align*}
Define $\theta^\top := (\lambda_1, \dots, \lambda_{T-1}, \mu_1, \dots, \mu_T)$ and
\begin{align*}
    t(x, y)^\top := \left( f_1(x, y_1, y_2), \dots, f_{T-1}(x, y_{T-1}, y_T), g_1(x, y_1), \dots, g_T(x, y_T) \right).
\end{align*}
Then we have
\begin{align*}
    p(y \mid x) \sim \frac{\exp\ip{\theta, t(x, y)}}{\int \exp\ip{\theta, t(x, \bar y)} \D \bar y}.
\end{align*}

\begin{example}[Score matching with exponential families]
    Assume that $\bbZ = \reals^p$.
    Consider an exponential family on $\reals^d$ with densities
    \begin{align*}
        \log{p_\theta(z)} = \theta^\top t(z) + h(z) - \Lambda(\theta).
    \end{align*}
    The non-normalized density $q_\theta$ then reads $\log{q_\theta(z)} = \theta^\top t(z) + h(z)$.
    As a result, the score matching loss becomes
    \begin{align*}
        \score(\theta; z) &= \sum_{k=1}^p \left[ \theta^\top \frac{\partial^2 t(z)}{\partial z_k^2} + \frac{\partial^2 h(z)}{\partial z_k^2} + \frac12 \left( \theta^\top \frac{\partial t(z)}{\partial z_k} + \frac{\partial h(z)}{\partial z_k} \right)^2 \right] + \text{const} \\
        &= \frac12 \theta^\top A(z) \theta - b(z)^\top \theta + c(z) + \text{const},
    \end{align*}
    where $A(z) := \sum_{k=1}^p \frac{\partial t(z)}{\partial z_k} \big(\frac{\partial t(z)}{\partial z_k}\big)^\top$ is p.s.d, $b(z) := \sum_{k=1}^p \left[ \frac{\partial^2 t(z)}{\partial z_k^2} + \frac{\partial h(z)}{\partial z_k} \frac{\partial t(z)}{\partial z_k} \right]$, and $c(z) := \sum_{k=1}^p \left[ \frac{\partial^2 h(z)}{\partial z_k^2} + \big(\frac{\partial h(z)}{\partial z_k}\big)^2 \right]$.
    Therefore, the score matching loss $\score(\theta; z)$ is convex.
    Moreover, since the third derivatives of $\ell(\cdot; z)$ is zero, the score matching loss is generalized self-concordant for all $\nu \ge 2$ and $R \ge 0$.
    When the true distribution $\Prob$ is supported on the non-negative orthant $\reals^p_+$, the score matching loss does not apply.
    Fortunately, a generalized score matching \citep{hyvarinen2007some,yu2019generalized} loss can be used to address this issue.
    Let $w_1, \dots, w_m: \reals_+ \to \reals_+$ be functions that are absolutely continuous in every bounded sub-interval of $\reals_+$.
    Then the generalized score matching loss reads
    \begin{align}\label{eq:score_matching}
        \score(\theta; z) = \sum_{j=1}^d \left[ w_j'(z_j) \partial_j \log{q(z)} + w_j(z_j) \partial_{jj} \log{q(z)} + \frac12 w_j(z_j) (\partial_j \log{q(z)})^2 \right] + \text{const},
    \end{align}
    which consists of a weighted version of the original score matching loss with weights $\{w_j(x_j)\}_{j=1}^d$ (the last two terms in \eqref{eq:score_matching}) and an additional term (the first term in \eqref{eq:score_matching}).
    According to \cite[Theorem 5]{yu2019generalized}, the loss \eqref{eq:score_matching} admits a quadratic form:
    \begin{align*}
        \score(z, Q_\theta) = \frac12 \theta^\top \bar A(z) \theta - \bar b(z)^\top \theta + \bar c(z) + \text{const},
    \end{align*}
    where $\bar A(z)$ is p.s.d.
    Hence, it is generalized self-concordant.
    Note that a particular example is the pairwise graphical models studies in \citep{yu2016statistical,yu2020simultaneous}.
\end{example}

\begin{example}[Generalized score matching with exponential families]
    When the true distribution $\Prob$ is supported on the non-negative orthant, $\reals^d_+$, the Hyv\"arinen score does not apply.
    Hyv\"arinen \citep{hyvarinen2007some} proposed the non-negative score matching to address this issue, which is later generalized in \cite[Section 2.2]{yu2019generalized}.
    Let $h_1, \dots, h_m: \reals_+ \to \reals_+$ be positive functions that are absolutely continuous in every bounded sub-interval of $\reals_+$.
    Then the generalized Hyv\"arinen score reads
    \begin{align}\label{eq:general_score_matching}
        \score(z, Q) = \sum_{j=1}^d \left[ h_j'(z_j) \partial_j \log{q(z)} + h_j(z_j) \partial_{jj} \log{q(z)} + \frac12 h_j(z_j) (\partial_j \log{q(z)})^2 \right],
    \end{align}
    which is a weighted version of the original Hyv\"arinen score with weights $\{h_j(x_j)\}_{j=1}^d$ (the last two terms in \eqref{eq:general_score_matching}) with an additional term (the first term in \eqref{eq:general_score_matching}).
    
    We then consider an exponential family on $\reals_+^d$ with densities
    \begin{align*}
        \log{q_\theta(z)} = \theta^\top t(z) - S(\theta) + b(z).
    \end{align*}
    According to \cite[Theorem 5]{yu2019generalized}, the score \eqref{eq:general_score_matching} admits the quadratic form:
    \begin{align*}
        \score(z, Q_\theta) = \frac12 \theta^\top \Gamma(z) \theta - g(z)^\top \theta + C,
    \end{align*}
    where $\Gamma(z)$ is p.s.d.
    Hence, this score is self-concordant.
    Note that a particular example is the pairwise graphical models studies in \citep{yu2016statistical,yu2020simultaneous}.
\end{example}

\subsection{Applications to goodness-of-fit testing}
\label{sub:appendix:application}

Before we start, we note that a simple modification to the confidence bound in \Cref{thm:conf_set} leads to the following risk bound that can be utilized to analyze the likelihood ratio test.

\begin{corollary}\label{cor:risk_bound}
    Under the same assumptions in \Cref{thm:conf_set}, we have, with probability at least $1 - \delta$,
    \begin{align*}
        L(\theta_n) - L(\theta_\star) \lesssim K_{1}^2 \omega_\nu^2(\varepsilon) \log{(e/\delta)} \frac{d_\star}{n}
    \end{align*}
    whenever $n$ satisfies \eqref{eq:n_large_enough}.
\end{corollary}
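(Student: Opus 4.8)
The plan is to turn the confidence bound on $\norm{\theta_n - \theta_\star}_{H_\star}^2$ into an excess-risk bound via a second-order Taylor expansion of the population risk $L$ together with the generalized self-concordance of $L$. The starting point is the localization already produced inside the proof of \Cref{thm:conf_set}: on the event $\calA \calB \calC$ of \eqref{eq:def_events}, the estimator $\theta_n$ uniquely exists and satisfies
\begin{align*}
    \norm{\theta_n - \theta_\star}_{H_\star}^2 \le \frac{16 d_\star}{n} + C K_1^2 \log{(e/\delta)} \frac{\norm{\Omega_\star}_2}{n} \le r_n^2,
\end{align*}
with probability at least $1 - \delta$ whenever $n$ satisfies \eqref{eq:n_large_enough}, where $r_n := \sqrt{C K_1^2 \log{(e/\delta)} d_\star / n}$ and $\varepsilon := r_n R_\nu^\star \le K_\nu$. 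All the probabilistic work is thus already absorbed; the remaining task is purely deterministic on this event.

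First I would invoke the integral form of Taylor's theorem. Since $\nabla L(\theta_\star) = 0$ by the first-order optimality condition, writing $\theta_t := \theta_\star + t(\theta_n - \theta_\star)$ gives
\begin{align*}
    L(\theta_n) - L(\theta_\star) = \int_0^1 (1 - t)\, (\theta_n - \theta_\star)^\top H(\theta_t) (\theta_n - \theta_\star)\, \D t.
\end{align*}
The key observation is that every intermediate point $\theta_t$ lies in the Dikin ellipsoid $\Theta_{r_n}(\theta_\star)$, because $\norm{\theta_t - \theta_\star}_{H_\star} = t \norm{\theta_n - \theta_\star}_{H_\star} \le r_n$. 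Hence I can apply the population Hessian sandwich \eqref{eq:pop_hess_bound}, which follows from \Cref{asmp:self_concordance} together with \Cref{lem:bound_d_nu}, to obtain $H(\theta_t) \preceq \omega_\nu(R_\nu^\star \norm{\theta_t - \theta_\star}_{H_\star}) H_\star \preceq \omega_\nu(\varepsilon) H_\star$, where the last inequality uses the monotonicity of $\omega_\nu$ and $R_\nu^\star \norm{\theta_t - \theta_\star}_{H_\star} \le R_\nu^\star r_n = \varepsilon$.

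Substituting this bound into the integral and using $\int_0^1 (1 - t)\, \D t = 1/2$ yields $L(\theta_n) - L(\theta_\star) \le \tfrac12 \omega_\nu(\varepsilon) \norm{\theta_n - \theta_\star}_{H_\star}^2$; combining with the localization displayed above and the trivial inequality $\omega_\nu(\varepsilon) \le \omega_\nu^2(\varepsilon)$ (valid since $\omega_\nu \ge 1$) gives the claimed rate $L(\theta_n) - L(\theta_\star) \lesssim K_1^2 \omega_\nu^2(\varepsilon) \log{(e/\delta)} d_\star / n$. I do not expect a genuine obstacle: the only point requiring care is to confirm that the entire segment $\{\theta_t\}_{t \in [0,1]}$ stays inside the Dikin ellipsoid so that the self-concordant Hessian comparison is valid along the whole integration path, and this is guaranteed by the convexity of the $\norm{\cdot}_{H_\star}$-ball and the localization $\norm{\theta_n - \theta_\star}_{H_\star} \le r_n$. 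Using the integral remainder rather than a mean-value point also sidesteps any appeal to a matrix-valued mean-value theorem, keeping the argument elementary.
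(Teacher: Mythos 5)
Your proof is correct and follows essentially the same route as the paper: Taylor-expand $L$ around $\theta_\star$, kill the first-order term using $\nabla L(\theta_\star) = 0$, bound the Hessian along the segment by $\omega_\nu(\varepsilon) H_\star$ via the generalized self-concordance comparison inside the Dikin ellipsoid, and plug in the localization $\norm{\theta_n - \theta_\star}_{H_\star}^2 \lesssim K_1^2 \log(e/\delta)\, d_\star / n$ established in the proof of \Cref{thm:conf_set}. The only cosmetic differences are that you use the integral form of the remainder and the deterministic population-Hessian sandwich \eqref{eq:pop_hess_bound}, whereas the paper invokes a mean-value point $\bar\theta_n \in \mbox{Conv}\{\theta_n,\theta_\star\}$ and writes $H_n(\bar\theta_n)$ in the expansion of $L$ (apparently a typo for $H(\bar\theta_n)$); both yield the same bound.
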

\begin{proof}
    By Taylor's expansion, we have
    \begin{align*}
        L(\theta_n) - L(\theta_\star) = S(\theta_\star)^\top (\theta_n - \theta_\star) + \frac12 \norm{\theta_n - \theta_\star}_{H_n(\bar \theta_n)}^2
    \end{align*}
    for some $\bar \theta_n \in \mbox{Conv}\{\theta_n, \theta_\star\} \subset \Theta_{\varepsilon / R_\nu^\star}(\theta_\star)$.
    By $S(\theta_\star) = 0$ and \Cref{thm:conf_set}, we get
    \begin{align*}
        L(\theta_n) - L(\theta_\star) \lesssim K_1^2 \omega_\nu^2(\varepsilon) \log{(e/\delta)} \frac{d_\star}{n}.
    \end{align*}
\end{proof}

We begin with the type I error rates of Rao's score test, the likelihood ratio test, and the Wald test.
Note that $d_\star = d$ under $\hnull$.

\begin{customprop}{\ref{prop:typeI}}
    Suppose that \Cref{asmp:sub_gaussian,asmp:bernstein} with $r = 0$ hold true.
    Under $\hnull$, we have, with probability at least $1 - \delta$,
    \begin{align*}
        \rao \lesssim K_1^2 \log{(e/\delta)} \frac{d}n
    \end{align*}
    whenever $n \ge 4(K_2 + 2\sigma_H^2) \log{(4d/\delta)}$.
    Furthermore, if \Cref{asmp:self_concordance,asmp:bernstein} with $r = K_\nu/R_\nu^\star$ hold true, we have, with probability at least $1 - \delta$,
    \begin{align*}
        \lr, \wald \lesssim K_1^2 \omega_\nu^2(\varepsilon) \log{(e/\delta)} \frac{d}{n}
    \end{align*}
    whenever $n$ satisfies \eqref{eq:n_large_enough}.
\end{customprop}

\begin{proof}
    Under $\hnull$, we have $\theta_\star = \theta_0$.
    It then follows from \Cref{prop:score} that, with probability at least $1 - \delta$,
    \begin{align*}
        \rao := \norm{S_n(\theta_0)}_{H_n^{-1}(\theta_0)}^2 \lesssim \frac1n K_1^2 \log{(e/\delta)} d
    \end{align*}
    whenever $n \ge 4(K_2 + 2\sigma_H^2) \log{(4d/\delta)}$.
    
    By Taylor's theorem, there exists $\bar \theta_n \in \mbox{Conv}\{\theta_n, \theta_\star\}$ such that
    \begin{align*}
        \lr = 2S_n^\top(\theta_n) (\theta_0 - \theta_n) + \norm{\theta_0 - \theta_n}_{H_n(\bar \theta_n)}^2 = \norm{\theta_0 - \theta_n}_{H_n(\bar \theta_n)}^2.
    \end{align*}
    Following a similar argument as \Cref{thm:conf_set}, we obtain, with probability at least $1 - \delta$,
    \begin{align*}
        \lr \lesssim K_1^2 \omega_\nu^2(\varepsilon) \log{(e/\delta)} \frac{d}{n}
    \end{align*}
    whenever $n$ satisfies \eqref{eq:n_large_enough}.
    The statement for $\wald$ follows directly from \Cref{thm:conf_set}.
\end{proof}

We then prove the result for statistical power given in \Cref{prop:power}.

\begin{customprop}{\ref{prop:power}}[Statistical power]
    Let $\theta_\star \neq \theta_0$ that may depend on $n$.
    The following statements are true for sufficiently large $n$.
    \begin{enumerate}
        \item[(a)] Suppose that $S(\theta_0) \neq 0$, $H(\theta_0) \succ 0$, and \Cref{asmp:self_concordance,asmp:sub_gaussian,asmp:bernstein} hold true with $r=0$.
        When $\theta_\star - \theta_0 = O(n^{-1/2})$ and $\tau_n := t_n(\alpha)/4 - \norm{S(\theta_0)}_{H(\theta_0)^{-1}}^2 - \Tr(\Omega(\theta_0))/n > 0$, we have
        \begin{align*}
            \Prob(\rao > t_n(\alpha)) &\le 2d \exp\left( - \frac{n}{4(K_2 + 2\sigma_H^2)} \right) + \exp\left( -c\min\left\{ \frac{n^2 \tau_n^2}{K_1^2 \norm{\Omega(\theta_0)}_2^2}, \frac{n\tau_n}{K_1 \norm{\Omega(\theta_0)}_\infty} \right\} \right).
        \end{align*}
        When $\theta_* - \theta_n = \omega(n^{-1/2})$, we have
        \begin{align*}
            \Prob(\rao > t_n(\alpha)) \ge 1 - 2d \exp\left( - \frac{n}{4(K_2 + 2\sigma_H^2)} \right) - \exp\left( -c\min\left\{ \frac{n^2 \bar \tau_n^2}{K_1^2 \norm{\Omega(\theta_0)}_2^2}, \frac{n\bar \tau_n}{K_1 \norm{\Omega(\theta_0)}_\infty} \right\} \right),
        \end{align*}
        where $\bar \tau_n := \left[ \norm{S(\theta_0)}_{H(\theta_0)^{-1}} - \sqrt{3t_n(\alpha)/4} \right]^2 - \Tr(\Omega(\theta_0))/n$.
        
        \item[(b)] Suppose that the assumptions in \Cref{thm:conf_set} hold true.
        When $\theta_\star - \theta_0 = O(n^{-1/2})$ and $\tau_n' := t_n(\alpha)/384 - \norm{\theta_\star - \theta_0}_{H(\theta_\star)}^2/64 - d/n > 0$, we have
        \begin{align*}
            \Prob(\lr > t_n(\alpha))
            \le \exp\left( -c\min\left\{ \frac{n^2 (\tau_n')^2}{K_1^2 \norm{\Omega(\theta_\star)}_2^2}, \frac{n\tau_n'}{K_1 \norm{\Omega(\theta_\star)}_\infty} \right\} \right) + \exp\left(-c \frac{\varepsilon^2 n^{3-\nu}}{(R_\nu^\star)^2 K_1^2 d} \right).
        \end{align*}
        When $\theta_* - \theta_n = \omega(n^{-1/2})$, we have
        \begin{align*}
            \Prob(\lr > t_n(\alpha)) \ge 1 - \exp\left( -c\min\left\{ \frac{n^2 (\bar \tau_n')^2}{K_1^2 \norm{\Omega(\theta_\star)}_2^2}, \frac{n\bar \tau_n'}{K_1 \norm{\Omega(\theta_\star)}_\infty} \right\} \right) - \exp\left(-c \frac{\varepsilon^2 n^{3-\nu}}{(R_\nu^\star)^2 K_1^2 d} \right),
        \end{align*}
        where
        \begin{align*}
            \bar \tau_n' := \left[ \norm{\theta_\star - \theta_0}_{H(\theta_\star)}/8 - \sqrt{t_n(\alpha)}/4 \right]^2 - d/n.
        \end{align*}
        
        \item[(c)] Suppose that the assumptions in \Cref{thm:conf_set} hold true.
        When $\theta_\star - \theta_0 = O(n^{-1/2})$ and $\tau_n' := t_n(\alpha)/384 - \norm{\theta_\star - \theta_0}_{H(\theta_\star)}^2/64 - d/n > 0$, we have
        \begin{align*}
            \Prob(\wald > t_n(\alpha))
            \le \exp\left( -c\min\left\{ \frac{n^2 (\tau_n')^2}{K_1^2 \norm{\Omega(\theta_\star)}_2^2}, \frac{n\tau_n'}{K_1 \norm{\Omega(\theta_\star)}_\infty} \right\} \right) + \exp\left(-c \frac{\varepsilon^2 n^{3-\nu}}{(R_\nu^\star)^2 K_1^2 d} \right).
        \end{align*}
        When $\theta_* - \theta_n = \omega(n^{-1/2})$, we have
        \begin{align*}
            \Prob(\wald > t_n(\alpha)) \ge 1 - \exp\left( -c\min\left\{ \frac{n^2 (\bar \tau_n')^2}{K_1^2 \norm{\Omega(\theta_\star)}_2^2}, \frac{n\bar \tau_n'}{K_1 \norm{\Omega(\theta_\star)}_\infty} \right\} \right) - \exp\left(-c \frac{\varepsilon^2 n^{3-\nu}}{(R_\nu^\star)^2 K_1^2 d} \right),
        \end{align*}
        where
        \begin{align*}
            \bar \tau_n' := \left[ \norm{\theta_\star - \theta_0}_{H(\theta_\star)}/8 - \sqrt{t_n(\alpha)}/4 \right]^2 - d/n.
        \end{align*}
    \end{enumerate}
\end{customprop}

\begin{proof}[Proof of \Cref{prop:power}]
    We are mostly interested in local alternatives, i.e., $\theta_\star \rightarrow \theta_0$ as $n \rightarrow \infty$.

    \textbf{Rao's score test}. Define four events
    \begin{align*}
        \calA &:= \{\rao > t_n(\alpha)\} \\
        \calB &:= \left\{ \frac12 H(\theta_0) \preceq H_n(\theta_0) \preceq \frac32 H(\theta_0) \right\} \\
        \calC &:= \left\{ 4 \norm{S_n(\theta_0) - S(\theta_0)}_{H(\theta_0)^{-1}}^2 > t_n(\alpha) - 4 \norm{S(\theta_0)}_{H(\theta_0)^{-1}}^2 \right\} \\
        \calD &:= \left\{ \norm{S_n(\theta_0) - S(\theta_0)}_{H(\theta_0)^{-1}} < \norm{S(\theta_0)}_{H(\theta_0)^{-1}} - \sqrt{3t_n(\alpha)/4} \right\}.
    \end{align*}
    Note that
    \begin{align*}
        S(\theta_0) = S(\theta_0) - S(\theta_\star) = H(\bar \theta) (\theta_0 - \theta_\star),
    \end{align*}
    where $\bar \theta \in \mbox{Conv}\{\theta_0, \theta_\star\}$.
    Due to \Cref{asmp:self_concordance}, we have
    \begin{align}\label{eq:hessian_order}
        e^{-R \norm{\bar \theta - \theta_0}_2} H(\theta_0) \preceq H(\bar \theta) \preceq e^{R \norm{\bar \theta - \theta_0}_2} H(\theta_0).
    \end{align}
    Therefore, we conclude that, as $n \rightarrow \infty$,
    \begin{align}\label{eq:score_order}
        S(\theta_0) = H(\bar \theta) (\theta_0 - \theta_\star) = \Theta(\theta_\star - \theta_0).
    \end{align}
    
    We first consider the case when $\theta_\star - \theta_0 = O(n^{-1/2})$.
    On the event $\calB$, it holds that
    \begin{align*}
        \rao
        \le 2 \norm{S_n(\theta_0)}_{H(\theta_0)^{-1}}^2
        \le 4 \norm{S_n(\theta_0) - S(\theta_0)}_{H(\theta_0)^{-1}}^2 + 4 \norm{S(\theta_0)}_{H(\theta_0)^{-1}}^2.
    \end{align*}
    This implies $\calA \calB \subset \calA \calC$ and thus
    \begin{align*}
        \Prob(\calA) = \Prob(\calA \calB) + \Prob(\calA \calB^c) \le \Prob(\calA \calC) + \Prob(\calB^c) \le \Prob(\calC) + \Prob(\calB^c)
    \end{align*}
    It follows from \Cref{thm:bernstein_matrix} that, when $n$ is large enough,
    \begin{align*}
        \Prob(\calB^c) \le 2d \exp\left( - \frac{n}{4(K_2 + 2\sigma_H^2)} \right).
    \end{align*}
    Moreover, note that $\calC = \{ \norm{S_n(\theta_0) - S(\theta_0)}_{H^{-1}(\theta_0)}^2 - \Tr(\Omega(\theta_0))/n \ge \tau_n \}$, where
    \begin{align*}
        \tau_n = t_n(\alpha)/4 - \norm{S(\theta_0)}_{H(\theta_0)^{-1}}^{2} - \Tr(\Omega(\theta_0))/n.
    \end{align*}
    By \Cref{thm:isotropic_tail}, we have, whenever $\tau_n > 0$,
    \begin{align*}
        \Prob(\calC) \le \exp\left( -c\min\left\{ \frac{n^2 \tau_n^2}{K_1^2 \norm{\Omega(\theta_0)}_2^2}, \frac{n\tau_n}{K_1 \norm{\Omega(\theta_0)}_\infty} \right\} \right).
    \end{align*}
    Consequently, it holds that, whenever $\tau_n > 0$ and $n$ is large enough,
    \begin{align*}
        \Prob(\calA) \le 2d \exp\left( - \frac{n}{4(K_2 + 2\sigma_H^2)} \right) + \exp\left( -c\min\left\{ \frac{n^2 \tau_n^2}{K_1^2 \norm{\Omega(\theta_0)}_2^2}, \frac{n\tau_n}{K_1 \norm{\Omega(\theta_0)}_\infty} \right\} \right).
    \end{align*}
    Note that, for large enough $n$, it holds that $\Tr(\Omega(\theta_0)) \rightarrow d$ and thus $t_n(\alpha) > \Tr(\Omega(\theta_0))/n$.
    Hence, it follows from \eqref{eq:score_order} that, as long as $\theta_\star - \theta_0 = o(n^{-1/2})$, $\tau_n > 0$ for sufficiently large $n$.
    
    We then consider the case when $\theta_\star - \theta_0 = \omega(n^{-1/2})$.
    On the event $\calB$, it holds that
    \begin{align*}
        \rao
        \ge 2 \norm{S_n(\theta_0)}_{H(\theta_0)^{-1}}^2 / 3
        \ge 4 [\norm{S(\theta_0)}_{H(\theta_0)^{-1}} -  \norm{S_n(\theta_0) - S(\theta_0)}_{H(\theta_0)^{-1}}]^2 / 3.
    \end{align*}
    By \Cref{thm:isotropic_tail}, it holds that $\norm{S_n(\theta_0) - S(\theta_0)}_{H(\theta_0)^{-1}} = O(n^{-1/2})$.
    By \eqref{eq:score_order}, we know that $\norm{S(\theta_0)}_{H(\theta_0)^{-1}} = \omega(n^{-1/2})$ and thus, for sufficiently large $n$,
    \begin{align*}
        \norm{S(\theta_0)}_{H(\theta_0)^{-1}} > \norm{S_n(\theta_0) - S(\theta_0)}_{H(\theta_0)^{-1}} + \sqrt{t_n(\alpha)}.
    \end{align*}
    This implies that $\calB \calD \subset \calA \calB$ and hence
    \begin{align*}
        \Prob(\calA) \ge \Prob(\calA \calB) \ge \Prob(\calB \calD) \ge 1 - \Prob(\calB^c) - \Prob(\calD^c).
    \end{align*}
    Following a similar argument as above, we have, whenever $n$ is large enough,
    \begin{align*}
        \Prob(\calA) \ge 1 - 2d \exp\left( - \frac{n}{4(K_2 + 2\sigma_H^2)} \right) - \exp\left( -c\min\left\{ \frac{n^2 \bar \tau_n^2}{K_1^2 \norm{\Omega(\theta_0)}_2^2}, \frac{n\bar \tau_n}{K_1 \norm{\Omega(\theta_0)}_\infty} \right\} \right),
    \end{align*}
    where
    \begin{align*}
        \bar \tau_n := \left[ \norm{S(\theta_0)}_{H(\theta_0)^{-1}} - \sqrt{3t_n(\alpha)/4} \right]^2 - \Tr(\Omega(\theta_0))/n.
    \end{align*}
    
    \textbf{The Wald test}.
    Notice that $d_\star = d$ since the model is well-specified.
    Fix $\varepsilon = \varepsilon_\nu$ so that $\omega_\nu(\varepsilon) \le 2$.
    Let $\delta := \exp\left(-c \frac{\varepsilon^2 n^{3-\nu}}{(R_\nu^\star)^2 K_1^2 d} \right)$.
    Define the following events
    \begin{align}\label{eq:events_wald}
        \begin{split}
            \calA &:= \left\{ \norm{S_n(\theta_\star)}_{H^{-1}(\theta_\star)}^{2} \le C K_{1}^2 \log{(e / \delta)} \frac{d}{n} \right\} \\
            \calB &:= \left\{ \frac12 H(\theta_\star) \preceq H_n(\theta_\star) \preceq \frac32 H(\theta_\star) \right\} \\
            \calC &:= \left\{ \frac1{2 \omega_\nu^2(\varepsilon)} H(\theta_\star) \preceq H_n(\theta) \preceq \frac{3}{2} \omega_\nu^2(\varepsilon) H(\theta_\star), \quad \mbox{for all } \theta \in \Theta_{\varepsilon/R_\nu^\star}(\theta_\star) \right\} \\
            \calD &:= \left\{ \wald > t_n(\alpha) \right\} \\
            \calE &:= \left\{ \norm{S_n(\theta_\star)}_{H(\theta_\star)^{-1}}^2 >  t_n(\alpha)/384 - \norm{\theta_\star - \theta_0}_{H(\theta_\star)}^2/64 \right\} \\
            \calF &:= \left\{ \norm{S_n(\theta_\star)}_{H(\theta_\star)^{-1}}^2 < \norm{\theta_\star - \theta_0}_{H(\theta_\star)}/8 - \sqrt{t_n(\alpha)}/4 \right\}.
        \end{split}
    \end{align}
    Following the proof of \Cref{thm:conf_set}, we get $\Prob(\calA \calB \calC) \ge 1 - \delta$ and,
    on the event $\calA \calB \calC$, we have, for sufficiently large $n$,
    \begin{align*}
        \frac1{4} H(\theta_\star) \preceq \frac1{2 \omega_\nu^2(\varepsilon)} H(\theta_\star) \preceq H_n(\theta_n) \preceq \frac32 \omega_\nu^2(\varepsilon) H(\theta_\star) \preceq 3 H(\theta_\star)
    \end{align*}
    and
    \begin{align}\label{eq:alt_local}
        \norm{\theta_n - \theta_\star}_{H(\theta_\star)} \le 4\sqrt{2} \norm{S_n(\theta_\star)}_{H_n(\theta_\star)^{-1}} \le 8 \norm{S_n(\theta_\star)}_{H(\theta_\star)^{-1}}.
    \end{align}
    
    We first consider the case $\theta_\star - \theta_0 = O(n^{-1/2})$.
    On the event $\calA \calB \calC$, it holds that
    \begin{align*}
        \norm{\theta_n - \theta_0}_{H_n(\theta_n)}^2
        &\le 3\norm{\theta_n - \theta_0}_{H(\theta_\star)}^2
        \le 6\norm{\theta_n - \theta_\star}_{H(\theta_\star)}^2 + 6\norm{\theta_\star - \theta_0}_{H(\theta_\star)}^2 \\
        &\le 384\norm{S_n(\theta_\star)}_{H_n(\theta_\star)^{-1}}^2 + 6\norm{\theta_\star - \theta_0}_{H(\theta_\star)}^2
    \end{align*}
    This implies that $\calA \calB \calC \calD \subset \calA \calB \calC \calE$ and thus
    \begin{align*}
        \Prob(\calD)
        = \Prob(\calA \calB \calC \calD) + \Prob((\calA \calB \calC)^c \calD)
        \le \Prob(\calE) + \Prob((\calA \calB \calC)^c).
    \end{align*}
    Moreover, note that $\calE = \{ \norm{S_n(\theta_0) - S(\theta_0)}_{H^{-1}(\theta_0)}^2 - d/n \ge \tau_n' \}$, where
    \begin{align*}
        \tau_n' = t_n(\alpha)/384 - \norm{\theta_\star - \theta_0}_{H(\theta_\star)}^2/64 - d/n.
    \end{align*}
    By \Cref{thm:isotropic_tail}, we have, whenever $\tau_n' > 0$,
    \begin{align*}
        \Prob(\calE) \le \exp\left( -c\min\left\{ \frac{n^2 (\tau_n')^2}{K_1^2 \norm{\Omega(\theta_\star)}_2^2}, \frac{n\tau_n'}{K_1 \norm{\Omega(\theta_\star)}_\infty} \right\} \right).
    \end{align*}
    Since $\Prob((\calA \calB \calC)^c) \le \delta = \exp\left(-c \frac{\varepsilon^2 n^{3-\nu}}{(R_\nu^\star)^2 K_1^2 d} \right)$, it holds that
    \begin{align*}
        \Prob(\calD)
        \le \exp\left( -c\min\left\{ \frac{n^2 (\tau_n')^2}{K_1^2 \norm{\Omega(\theta_\star)}_2^2}, \frac{n\tau_n'}{K_1 \norm{\Omega(\theta_\star)}_\infty} \right\} \right) + \exp\left(-c \frac{\varepsilon^2 n^{3-\nu}}{(R_\nu^\star)^2 K_1^2 d} \right).
    \end{align*}

    We then consider the case $\theta_\star - \theta_0 = \omega(n^{-1/2})$.
    On the event $\calA \calB \calC$, we get
    \begin{align*}
        \norm{\theta_n - \theta_0}_{H_n(\theta_n)}^2
        \ge \norm{\theta_n - \theta_0}_{H(\theta_\star)}^2 / 4
        \ge [\norm{\theta_\star - \theta_0}_{H(\theta_\star)} - \norm{\theta_n - \theta_\star}_{H(\theta_\star)}]^2 / 4.
    \end{align*}
    According to \eqref{eq:alt_local} and the event $\calA$, we have $\norm{\theta_n - \theta_\star}_{H(\theta_\star)} = O(n^{-1})$ and thus $\norm{\theta_n - \theta_\star}_{H(\theta_\star)} < \norm{\theta_\star - \theta_0}_{H(\theta_\star)}$ for sufficiently large $n$.
    As a result, it holds that
    \begin{align*}
        \norm{\theta_n - \theta_0}_{H_n(\theta_n)}^2
        \ge \left[ \norm{\theta_\star - \theta_0}_{H(\theta_\star)} - 8 \norm{S_n(\theta_\star)}_{H(\theta_\star)^{-1}} \right]^2/4.
    \end{align*}
    This implies that $\calA \calB \calC \calF \subset \calA \calB \calC \calD$ and thus
    \begin{align*}
        \Prob(\calD) \ge \Prob(\calA \calB \calC \calD) \ge \Prob(\calA \calB \calC \calF) \ge 1 - \Prob((\calA \calB \calC)^c) - \Prob(\calF^c).
    \end{align*}
    Let
    \begin{align*}
        \bar \tau_n' := \left[ \norm{\theta_\star - \theta_0}_{H(\theta_\star)}/8 - \sqrt{t_n(\alpha)}/4 \right]^2 - d/n.
    \end{align*}
    It is positive for sufficiently large $n$ since $\theta_\star - \theta_0 = \omega(n^{-1/2})$.
    By \Cref{thm:isotropic_tail} and $\Prob((\calA \calB \calC)^c) \le \delta$, it holds that
    \begin{align*}
        \Prob(\calD) \ge 1 - \exp\left( -c\min\left\{ \frac{n^2 (\bar \tau_n')^2}{K_1^2 \norm{\Omega(\theta_\star)}_2^2}, \frac{n\bar \tau_n'}{K_1 \norm{\Omega(\theta_\star)}_\infty} \right\} \right) - \exp\left(-c \frac{\varepsilon^2 n^{3-\nu}}{(R_\nu^\star)^2 K_1^2 d} \right).
    \end{align*}
    
    \textbf{The likelihood ratio test}.
    Note that
    \begin{align*}
        \ell_n(\theta_0) - \ell_n(\theta_n) = \norm{\theta_n - \theta_0}_{H_n(\bar \theta)}^2
    \end{align*}
    for some $\bar \theta \in \mbox{Conv}\{\theta_n, \theta_0\}$.
    The claim can be proved with the same argument as the one for the Wald test.
\end{proof}

\section{Technical tools}
\label{sec:tools}
In this section, we first recall and prove some key properties of generalized self-concordance.
We then review some key results regarding the concentration of random vectors and matrices.

\subsection{Properties of generalized self-concordant functions}
\label{sub:appendix:self_concordance}

Throughout this section, we let $f: \reals^d \rightarrow \reals$ be $(R, \nu)$-generalized self-concordant as in \Cref{def:general_self_concordance}, where $R > 0$ and $\nu \ge 2$.
For simplicity of the notation, we denote $\norm{\cdot}_x := \norm{\cdot}_{\nabla^2 f(x)}$.
Let
\begin{align}\label{eq:d_nu}
    d_\nu(x, y) :=
    \begin{cases}
        R \norm{y - x}_2 & \mbox{if } \nu = 2 \\
        (\nu/2 - 1) R \norm{y - x}_2^{3-\nu} \norm{y - x}_x^{\nu-2} & \mbox{if } \nu > 2
    \end{cases}
\end{align}
and
\begin{align}\label{eq:omega_nu}
    \omega_\nu(\tau) :=
    \begin{cases}
        (1 - \tau)^{-2/(\nu-2)} & \mbox{if } \nu > 2 \\
        e^{\tau} & \mbox{if } \nu = 2
    \end{cases}
\end{align}
with $\dom(\omega_\nu) = \reals$ if $\nu = 2$ and $\dom(\omega_\nu) = (-\infty, 1)$ if $\nu > 2$.

The next proposition gives bounds for the Hessian of $f$.
\begin{proposition}[\citet{sun2019generalized}, Prop.~8]
\label{prop:hessian}
    For any $x, y \in \dom(f)$, we have
    \begin{align*}
        \frac{1}{\omega_\nu(d_\nu(x, y))} \nabla^2 f(x) \preceq \nabla^2 f(y) \preceq \omega_\nu(d_\nu(x,y)) \nabla^2 f(x),
    \end{align*}
    where it holds if $d_\nu(x, y) < 1$ for the case $\nu > 2$.
\end{proposition}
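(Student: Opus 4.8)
The plan is to reduce the matrix inequality to a scalar statement along the segment joining $x$ and $y$ and then integrate the self-concordance bound. Fix $x, y \in \dom(f)$ and an arbitrary direction $u \in \reals^d$, and set $y_t := x + t(y - x)$ for $t \in [0, 1]$. I would introduce the two scalar functions $\psi(t) := u^\top \nabla^2 f(y_t) u = \norm{u}_{y_t}^2$ and $\phi(t) := \norm{y - x}_{y_t}$. Differentiating $\psi$ in $t$ produces a third directional derivative, $\psi'(t) = D_x^3 f(y_t)[y - x, u, u]$, to which the generalized self-concordance of \Cref{def:general_self_concordance} applies directly, giving $\abs{\psi'(t)} \le R\, \psi(t)\, \phi(t)^{\nu-2}\norm{y-x}_2^{3-\nu}$. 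Since $u$ is arbitrary, it suffices to show $\abs{\log\psi(1) - \log\psi(0)} \le \log\omega_\nu(d_\nu(x,y))$, after which quantifying over $u$ yields the matrix inequality.

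For $\nu = 2$ this is immediate: the factor $\phi(t)^{\nu-2}$ equals $1$, so $\abs{(\log\psi)'(t)} \le R\norm{y-x}_2 = d_2(x,y)$, and integrating over $[0,1]$ gives the bound with $\omega_2(\tau) = e^\tau$. The substance of the argument is the case $\nu > 2$, where $\phi(t)$ enters the bound on $\psi'$ and must itself be controlled along the segment. Here I would first derive an auxiliary differential inequality for $\phi$ by applying the same self-concordance bound with both vector arguments set to $y - x$: this yields $\abs{\frac{d}{dt}\phi(t)^2} \le R\phi(t)^\nu \norm{y-x}_2^{3-\nu}$, hence $\abs{\phi'(t)} \le \frac{R}{2}\phi(t)^{\nu-1}\norm{y-x}_2^{3-\nu}$.

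The key manipulation is to pass to the variable $\phi^{2-\nu}$, which linearizes the inequality: one computes $\abs{\frac{d}{dt}\phi(t)^{2-\nu}} \le \frac{\nu-2}{2}R\norm{y-x}_2^{3-\nu}$, a constant bound. Integrating from $0$ to $t$ and recognizing that $d_\nu(x,y) = \frac{\nu-2}{2}R\norm{y-x}_2^{3-\nu}\phi(0)^{\nu-2}$ (cf.~\eqref{eq:d_nu}), I obtain $\phi(t)^{2-\nu} \ge \phi(0)^{2-\nu} - \frac{\nu-2}{2}R\norm{y-x}_2^{3-\nu}t$; because $2 - \nu < 0$, this is an \emph{upper} bound on $\phi(t)^{\nu-2}$, valid precisely when $d_\nu(x,y) < 1$. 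Substituting this into the bound for $(\log\psi)'$ and integrating over $[0,1]$ gives $\abs{\log\psi(1) - \log\psi(0)} \le \frac{2}{\nu-2}\log\frac{1}{1 - d_\nu(x,y)} = \log\omega_\nu(d_\nu(x,y))$ by the definition of $\omega_\nu$ in \eqref{eq:omega_nu}, which is exactly the claim after exponentiating.

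The main obstacle is this $\nu > 2$ coupling: because the bound on $\psi'$ involves $\phi(t)$, one cannot integrate a logarithmic derivative directly as in the $\nu = 2$ case, and must instead first establish uniform control of $\phi$ along the segment. The reparametrization to $\phi^{2-\nu}$ is what makes the auxiliary inequality integrable in closed form, and it is also the source of the $d_\nu(x,y) < 1$ restriction, since the lower bound on $\phi^{2-\nu}$ must remain positive for the estimate on $\phi^{\nu-2}$ to stay finite. All constants track cleanly through the definitions of $d_\nu$ and $\omega_\nu$, so no further delicacy is needed once the two differential inequalities are in place.
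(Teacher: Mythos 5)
Your argument is correct. The paper does not prove this statement itself---it imports it directly as Proposition~8 of \citet{sun2019generalized}---and your proof is essentially a faithful reconstruction of the standard argument there: differentiate $t \mapsto \log\bigl(u^\top \nabla^2 f(x + t(y-x))\, u\bigr)$, bound the resulting third directional derivative by the self-concordance inequality, control $\norm{y-x}_{y_t}$ along the segment via the linearizing substitution $\phi^{2-\nu}$ for $\nu > 2$, and integrate; the constants and the restriction $d_\nu(x,y) < 1$ come out exactly as in \eqref{eq:d_nu} and \eqref{eq:omega_nu}.
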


We then give the bounds for function values.
Define two functions
\begin{align}\label{eq:bar_omega}
    \bar \omega_\nu(\tau) := \int_0^1 \omega_\nu(t \tau) \D t =
    \begin{cases}
        \tau^{-1} (e^\tau - 1) & \mbox{if } \nu = 2 \\
        -\tau^{-1} \log{(1 - \tau)} & \mbox{if } \nu = 4 \\
        \frac{\nu - 2}{\nu - 4} \frac{1 - (1 - \tau)^{(\nu-4)/(\nu-2)}}{\tau} & \mbox{otherwise}
    \end{cases}
\end{align}
and
\begin{align}
    \bbar{\omega}_\nu(\tau) := \int_0^1 t \bar \omega_\nu(t \tau) \D t =
    \begin{cases}
        \tau^{-2} (e^\tau - \tau - 1) & \mbox{if } \nu = 2 \\
        -\tau^{-2} [\tau + \log{(1 - \tau)}] & \mbox{if } \nu = 3 \\
        \tau^{-2} [(1 - \tau) \log{(1 - \tau)} + \tau] & \mbox{if } \nu = 4 \\
        \frac{\nu - 2}{\nu - 4} \frac1\tau \left[ \frac{\nu - 2}{2(3 - \nu) \tau} \left( (1 - \tau)^{2(3-\nu)/(2-\nu)} - 1 \right) - 1 \right] & \mbox{otherwise}.
    \end{cases}
\end{align}

\begin{proposition}[\citet{sun2019generalized}, Prop.~10]
\label{prop:function_value}
    For any $x, y \in \dom(f)$, we have
    \begin{align*}
        \bbar{\omega}_\nu(-d_\nu(x,y)) \norm{y - x}_x^2 \le f(y) - f(x) - \ip{\nabla f(x), y - x} \le \bbar{\omega}_\nu(d_\nu(x, y)) \norm{y - x}_x^2,
    \end{align*}
    where it holds if $d_\nu(x, y) < 1$ for the case $\nu > 2$.
\end{proposition}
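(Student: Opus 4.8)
The plan is to integrate the exact second-order Taylor remainder against the Hessian sandwich of \Cref{prop:hessian}. Write $u := y - x$ and $x_s := x + s u$ for $s \in [0,1]$, and recall the integral form of the Bregman remainder,
\begin{align*}
    f(y) - f(x) - \ip{\nabla f(x), y - x} = \int_0^1 (1 - s)\, \norm{u}_{x_s}^2 \, \D s.
\end{align*}
The entire statement thus reduces to controlling the scalar integrand $\norm{u}_{x_s}^2 = u^\top \nabla^2 f(x_s) u$ in terms of $\norm{u}_x^2$ uniformly over the segment.

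The key algebraic observation is a scaling identity for $d_\nu$ along the segment. Substituting $x_s - x = s u$ into \eqref{eq:d_nu} and using $\norm{s u}_2 = s \norm{u}_2$ and $\norm{s u}_x = s\norm{u}_x$ gives $d_\nu(x, x_s) = s\, d_\nu(x, y)$ for every $s \in [0,1]$, in both the $\nu = 2$ and $\nu > 2$ cases (for $\nu>2$ the powers $3 - \nu$ and $\nu - 2$ add to $1$). Writing $\tau := d_\nu(x, y)$ and applying \Cref{prop:hessian} to the pair $(x, x_s)$ then yields, pointwise in $s$,
\begin{align*}
    \omega_\nu(-s\tau)\, \norm{u}_x^2 \le \norm{u}_{x_s}^2 \le \omega_\nu(s\tau)\, \norm{u}_x^2,
\end{align*}
where for $\nu > 2$ this is legitimate precisely because the admissibility hypothesis $\tau = d_\nu(x,y) < 1$ keeps $s\tau$ inside $\dom(\omega_\nu)$.

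It then remains to evaluate the two resulting integrals. I would establish once and for all the change-of-variables identity $\int_0^1 (1 - s)\, \omega_\nu(s\eta)\, \D s = \bbar{\omega}_\nu(\eta)$, valid for either sign of $\eta$: writing $1-s = \int_s^1 \D r$ and swapping the order of integration turns the left side into $\int_0^1 \big(\int_0^r \omega_\nu(s\eta)\,\D s\big)\,\D r$, and rescaling $s = r t$ makes the inner integral equal to $r\,\bar\omega_\nu(r\eta)$, so that integrating in $r$ reproduces the definition of $\bbar{\omega}_\nu$. Applying this with $\eta = \tau$ to the upper integrand and with $\eta = -\tau$ to the lower integrand gives exactly $\bbar{\omega}_\nu(\pm d_\nu(x,y))\,\norm{y - x}_x^2$, which is the claim; the closed forms in \eqref{eq:bar_omega} and in the subsequent display for $\bbar{\omega}_\nu$ can be read off by elementary integration but are not needed for the inequality itself.

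The main obstacle is the lower bound. The upper bound uses only the easy inclusion of \Cref{prop:hessian}, but matching the tight coefficient $\bbar{\omega}_\nu(-d_\nu)$ requires the sharp \emph{lower} Hessian estimate $\omega_\nu(-d_\nu(x,y))\,\nabla^2 f(x) \preceq \nabla^2 f(y)$ rather than the cruder $\omega_\nu(d_\nu(x,y))^{-1}\,\nabla^2 f(x) \preceq \nabla^2 f(y)$. For $\nu > 2$ these genuinely differ, since $(1 - s\tau)^{2/(\nu-2)} \le (1 + s\tau)^{-2/(\nu-2)}$, i.e. the reciprocal form is the weaker of the two and would under-shoot $\bbar{\omega}_\nu(-\tau)$. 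I would therefore first sharpen the lower inclusion of \Cref{prop:hessian} to the $\omega_\nu(-\cdot)$ form---it follows from the same differential comparison that yields the upper inclusion, integrating $\tfrac{\D}{\D t}\norm{u}_{x_t}^2$ and invoking the defining third-derivative bound of generalized self-concordance---and only then perform the integration above, keeping the condition $d_\nu(x,y) < 1$ in force throughout so that every evaluation of $\omega_\nu$ remains in its domain.
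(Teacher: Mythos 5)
The paper does not actually prove this proposition---it is imported verbatim from \citet{sun2019generalized} (their Prop.~10)---so there is no in-paper argument to compare against; your reconstruction is the standard one and is essentially sound. The Taylor-remainder identity, the homogeneity $d_\nu(x,x_s)=s\,d_\nu(x,y)$ (the exponents $3-\nu$ and $\nu-2$ summing to $1$), and the identity $\int_0^1(1-s)\,\omega_\nu(s\eta)\,\D s=\bbar{\omega}_\nu(\eta)$ all check out, and you correctly spot that the lower bound is the delicate half. The one place where your wording outruns what is provable is the proposed ``sharpening of the lower inclusion of \Cref{prop:hessian} to the $\omega_\nu(-\cdot)$ form'': as a \emph{matrix} inequality, $\omega_\nu(-d_\nu(x,y))\,\nabla^2 f(x)\preceq\nabla^2 f(y)$ does not follow from the differential comparison for $\nu>2$ (bounding $\tfrac{\D}{\D t}\,v^\top\nabla^2 f(x_t)v$ for an arbitrary direction $v$ requires an \emph{upper} bound on $\norm{u}_{x_t}$, which only yields the weaker $\omega_\nu(d_\nu)^{-1}$ constant), and it is not the statement you need. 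What you need, and what the ODE argument genuinely delivers, is the \emph{directional} bound along $u=y-x$ only: setting $\phi(t):=\norm{u}_{x_t}^2$, the defining inequality with $v=u$ gives $\abs{\phi'(t)}\le R\,\phi(t)^{\nu/2}\norm{u}_2^{3-\nu}$, hence $\abs{\tfrac{\D}{\D t}\phi(t)^{(2-\nu)/2}}\le(\nu/2-1)R\norm{u}_2^{3-\nu}$ for $\nu>2$ (and $\abs{\tfrac{\D}{\D t}\log\phi(t)}\le R\norm{u}_2$ for $\nu=2$), which integrates to exactly $\omega_\nu(-s\tau)\norm{u}_x^2\le\norm{u}_{x_s}^2\le\omega_\nu(s\tau)\norm{u}_x^2$. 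Since the Taylor integrand is precisely this directional quadratic form, the scalar version suffices and your proof closes; just phrase the key lemma as a statement about $\norm{y-x}_y^2/\norm{y-x}_x^2$ (as in Sun and Tran-Dinh's own Prop.~9) rather than as a strengthened semidefinite ordering.
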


In the following, we fix $x \in \dom(f)$ and assume $\nabla^2 f(x) \succ 0$.
We denote $\lambda_{\min} := \lambda_{\min}(\nabla^2 f(x))$ and $\lambda_{\max} := \lambda_{\max}(\nabla^2 f(x))$.
The next lemma bounds $d_\nu(x, y)$ with the local norm $\norm{y - x}_x$.
Let
\begin{align}\label{eq:R_nu}
    R_\nu :=
    \begin{cases}
        \lambda_{\min}^{-1/2} R & \mbox{if } \nu = 2 \\
        (\nu/2 -1) \lambda_{\min}^{(\nu - 3)/2} R & \mbox{if } \nu \in (2, 3] \\
        (\nu/2 - 1) \lambda_{\max}^{(\nu - 3)/2} R & \mbox{if } \nu > 3.
    \end{cases}
\end{align}
\begin{lemma}\label{lem:bound_d_nu}
    For any $\nu \ge 2$ and $y \in \dom(f)$, we have
    \begin{align}
        d_\nu(x, y) \le R_\nu \norm{y - x}_x.
    \end{align}
    Moreover, it holds that
    \begin{align*}
        \frac1{\omega_\nu(R_\nu \norm{y - x}_x)} \nabla^2 f(x) \preceq \nabla^2 f(y) \preceq \omega_\nu(R_\nu \norm{y - x}_x) \nabla^2 f(x),
    \end{align*}
    where it holds if $R_\nu \norm{y - x}_x < 1$ for the case $\nu > 2$.
\end{lemma}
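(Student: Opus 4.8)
The plan is to prove the scalar inequality $d_\nu(x,y) \le R_\nu \norm{y-x}_x$ first, by a case analysis on $\nu$, and then to read off the Hessian sandwich from \Cref{prop:hessian} using the monotonicity of $\omega_\nu$. The only input beyond the definitions is the pair of spectral bounds
\begin{align*}
    \sqrt{\lambda_{\min}}\,\norm{u}_2 \le \norm{u}_x \le \sqrt{\lambda_{\max}}\,\norm{u}_2,
\end{align*}
which follow from $\lambda_{\min} I_d \preceq \nabla^2 f(x) \preceq \lambda_{\max} I_d$ and the assumption $\nabla^2 f(x) \succ 0$. Equivalently, $\norm{u}_2 \le \lambda_{\min}^{-1/2}\norm{u}_x$ and $\norm{u}_2 \ge \lambda_{\max}^{-1/2}\norm{u}_x$.

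For $\nu = 2$ the claim is immediate: $d_2(x,y) = R\norm{y-x}_2 \le R\lambda_{\min}^{-1/2}\norm{y-x}_x = R_\nu\norm{y-x}_x$. For $\nu > 2$ I would start from $d_\nu(x,y) = (\nu/2-1)R\,\norm{y-x}_2^{\,3-\nu}\norm{y-x}_x^{\,\nu-2}$ and bound the factor $\norm{y-x}_2^{\,3-\nu}$, splitting on the sign of the exponent $3-\nu$. When $\nu \in (2,3]$ the exponent is nonnegative, so the upper bound $\norm{y-x}_2 \le \lambda_{\min}^{-1/2}\norm{y-x}_x$ gives $\norm{y-x}_2^{\,3-\nu} \le \lambda_{\min}^{(\nu-3)/2}\norm{y-x}_x^{\,3-\nu}$. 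When $\nu > 3$ the exponent is negative, so raising to it reverses the monotonicity and I must instead use the lower bound $\norm{y-x}_2 \ge \lambda_{\max}^{-1/2}\norm{y-x}_x$, which yields $\norm{y-x}_2^{\,3-\nu} \le \lambda_{\max}^{(\nu-3)/2}\norm{y-x}_x^{\,3-\nu}$. In both subcases the product $\norm{y-x}_x^{\,3-\nu}\cdot\norm{y-x}_x^{\,\nu-2}$ collapses to $\norm{y-x}_x$, and the prefactor is exactly the corresponding branch of $R_\nu$ in \eqref{eq:R_nu}, so $d_\nu(x,y) \le R_\nu\norm{y-x}_x$ holds throughout.

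For the Hessian bounds I would invoke \Cref{prop:hessian}, which gives $\omega_\nu(d_\nu(x,y))^{-1}\nabla^2 f(x) \preceq \nabla^2 f(y) \preceq \omega_\nu(d_\nu(x,y))\nabla^2 f(x)$, valid for $\nu > 2$ under $d_\nu(x,y) < 1$; since the scalar inequality already proved gives $d_\nu(x,y) \le R_\nu\norm{y-x}_x < 1$ in that regime, the validity condition transfers. The finish uses that $\omega_\nu$ is increasing on its domain (for $\nu = 2$ it is $e^\tau$, and for $\nu > 2$ one checks $(\log\omega_\nu)'(\tau) = \tfrac{2}{(\nu-2)(1-\tau)} > 0$ on $(-\infty,1)$, which also contains $R_\nu\norm{y-x}_x \in [0,1)$). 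Monotonicity then gives $\omega_\nu(d_\nu(x,y)) \le \omega_\nu(R_\nu\norm{y-x}_x)$, which enlarges the upper factor and shrinks the reciprocal in the lower factor in exactly the directions needed, delivering the stated sandwich.

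The proof is essentially routine; the only point requiring care is the sign bookkeeping in the $\nu > 3$ subcase, where the negative exponent forces the switch from $\lambda_{\min}$ to $\lambda_{\max}$ and reverses which spectral bound is applied. I would flag explicitly that this is precisely what produces the $\lambda_{\max}^{(\nu-3)/2}$ appearing in the third branch of $R_\nu$, so the constant is not merely an upper bound but matches the definition exactly.
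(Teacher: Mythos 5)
Your proposal is correct and follows essentially the same route as the paper: bound $\norm{y-x}_2$ by the local norm via the spectral inequalities, split on the sign of the exponent $3-\nu$ to match the branches of $R_\nu$, and then transfer the bound through \Cref{prop:hessian} using the monotonicity of $\omega_\nu$. The paper only writes out the $\nu=2$ case and asserts the rest is similar, so your explicit sign bookkeeping for $\nu\in(2,3]$ versus $\nu>3$ is simply the detail the paper omits.
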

\begin{proof}
    Recall the definition of $d_\nu$ in \eqref{eq:d_nu}.
    If $\nu = 2$, then, by the Cauchy-Schwarz inequality,
    \begin{align*}
        d_\nu(x, y) = R \norm{y - x}_2 \le \norm{[\nabla^2 f(x)]^{-1/2}}_2 R \norm{y - x}_x \le \lambda_{\min}^{-1/2} R \norm{y - x}_x.
    \end{align*}
    The case $\nu > 2$ can be proved similarly.
\end{proof}

We then prove some useful properties for the function $\bbar{\omega}$.
\begin{lemma}
\label{lem:monotonicity}
    For any $\nu \ge 2$, the following statements hold true:
    \begin{enumerate}[label=(\alph*)]
        \item\label{item:varphi} The function $\varphi(\tau) := \bbar{\omega}_\nu(-\tau)$ is strictly decreasing on $[0, \infty)$ with $\varphi(0) = 1/2$ and $\varphi(\tau) \ge 0$ for all $\tau \ge 0$.
        \item\label{item:psi} The function $\psi(\tau) := \bbar{\omega}_\nu(-\tau) \tau$ is strictly increasing on $[0, \infty)$ with $\psi(0) = 0$.
    \end{enumerate}
\end{lemma}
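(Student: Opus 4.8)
The plan is to avoid the piecewise closed forms of $\bbar{\omega}_\nu$ entirely and argue directly from its defining iterated integral. Unfolding both definitions gives the representation
\begin{align*}
\varphi(\tau) = \bbar{\omega}_\nu(-\tau) = \int_0^1 \int_0^1 t\, \omega_\nu(-st\tau)\, \D s\, \D t,
\end{align*}
and I will only use two elementary facts about $\omega_\nu$: on its domain it is strictly positive and strictly increasing, i.e.\ $\omega_\nu > 0$ and $\omega_\nu' > 0$ (both immediate from $\omega_2(\sigma) = e^\sigma$ and $\omega_\nu(\sigma) = (1-\sigma)^{-2/(\nu-2)}$ for $\nu > 2$). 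The crucial structural observation is that for $\tau \ge 0$ and $s, t \in [0,1]$ the argument $-st\tau \le 0 < 1$ always lies safely inside $\dom(\omega_\nu)$, so there is no blow-up and differentiation under the integral sign is justified by smoothness of the integrand on the compact square.

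For part~(a), evaluating at $\tau = 0$ yields $\varphi(0) = \int_0^1 t\, \omega_\nu(0)\, \D t = 1/2$ since $\omega_\nu(0) = 1$, and nonnegativity of the integrand gives $\varphi(\tau) \ge 0$ for all $\tau \ge 0$. Strict monotonicity then follows by differentiating under the integral:
\begin{align*}
\varphi'(\tau) = -\int_0^1 \int_0^1 s t^2\, \omega_\nu'(-st\tau)\, \D s\, \D t < 0,
\end{align*}
where strict negativity holds because $\omega_\nu' > 0$ and $st^2 > 0$ on $(0,1]^2$.

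For part~(b) I would first simplify $\psi(\tau) = \tau\,\varphi(\tau)$ by carrying out the inner $s$-integral. Writing $g(x) := \int_0^x \omega_\nu(-p)\, \D p$, the substitution $p = st\tau$ gives $\int_0^1 \omega_\nu(-st\tau)\, \D s = g(t\tau)/(t\tau)$, which collapses the double integral to the single one
\begin{align*}
\psi(\tau) = \int_0^1 g(t\tau)\, \D t.
\end{align*}
From this representation both claims are immediate: $\psi(0) = \int_0^1 g(0)\, \D t = 0$, and since $g'(x) = \omega_\nu(-x)$,
\begin{align*}
\psi'(\tau) = \int_0^1 t\, \omega_\nu(-t\tau)\, \D t > 0,
\end{align*}
the integrand being strictly positive on $(0,1]$. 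Hence $\psi$ is strictly increasing on $[0,\infty)$.

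I expect the only real obstacle to be bookkeeping rather than substance: one must confirm that the iterated-integral representation is valid (a direct consequence of the definitions of $\bar\omega_\nu$ and $\bbar{\omega}_\nu$ together with Fubini), that the arguments never leave $\dom(\omega_\nu)$ --- which the sign constraint $-st\tau \le 0$ handles uniformly in $\nu$ --- and that differentiation under the integral sign is permissible. Working instead from the explicit piecewise formulas for $\bbar{\omega}_\nu$, with their separate $\nu = 2, 3, 4$ cases, would force several messy case analyses; the representation route sidesteps all of them and treats $\nu = 2$ and $\nu > 2$ on an equal footing.
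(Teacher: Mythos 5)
Your proof is correct and follows essentially the same route as the paper's: both argue directly from the integral definition of $\bbar{\omega}_\nu$ (never touching the piecewise closed forms), differentiate under the integral sign, and reduce everything to the positivity and strict monotonicity of $\omega_\nu$ on its domain---indeed, your double-integral formula for $\varphi'(\tau)$ is exactly what the paper's two nested differentiations produce once unrolled. The only minor divergence is in part (b), where the paper differentiates $\psi(\tau)=\tau^{-1}\int_0^\tau t\,\bar\omega_\nu(-t)\,\D t$ and concludes via monotonicity of $t\mapsto t\,\bar\omega_\nu(-t)$, whereas your substitution $\psi(\tau)=\int_0^1 g(t\tau)\,\D t$ makes the derivative manifestly a positive integral; this is a slightly cleaner finish to the same argument.
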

\begin{proof}
    \textbf{\ref{item:varphi}}.
    By definition, $\omega_\nu$ is strictly increasing on $(-\infty, 1)$.
    As a result, for any $\tau \in (-\infty, 1)$,
    \begin{align*}
        \bar \omega_\nu'(\tau) = \int_0^1 t \omega_\nu'(t \tau) \D t > 0.
    \end{align*}
    It then follows that, for any $\tau \ge 0$,
    \begin{align*}
        \varphi'(\tau)
        &= - \bbar{\omega}_\nu'(-\tau)
        = -\int_0^1 t^2 \bar \omega_\nu'(-t\tau) \D t < 0,
    \end{align*}
    and thus $\varphi$ is strictly decreasing on $[0, \infty)$.
    Note that $\omega_\nu(0) = 1$ and $\omega_\nu(\tau) > 0$ for all $\tau \in (-\infty, 1)$.
    It is straightforward to check that $\varphi(0) = 1/2$ and $\varphi(\tau) > 0$ for all $\tau \ge 0$.
    
    \textbf{\ref{item:psi}} Due to \eqref{eq:bar_omega}, it is clear that $\tau \mapsto \tau \bar \omega_\nu(-\tau)$ is strictly increasing on $[0, \infty)$ and equals 0 at $\tau = 0$.
    Note that, for any $\tau \ge 0$,
    \begin{align*}
        \psi(\tau) = \int_0^1 t\tau \bar \omega_\nu(-t\tau) \D t = \frac1\tau \int_0^\tau t \bar \omega_\nu(-t) \D t.
    \end{align*}
    We get
    \begin{align*}
        \psi'(\tau) = \frac1{\tau^2} \left[ \tau^2 \bar \omega_\nu(-\tau) - \int_0^\tau t \bar \omega_\nu(-t) \D t \right].
    \end{align*}
    By the monotonicity of $\tau \mapsto \tau \bar \omega_\nu(-\tau)$, it follows that $\psi'(\tau) > 0$.
\end{proof}

\begin{corollary}\label{cor:K_nu}
    Let $\tau \ge 0$.
    For any $\nu \ge 2$, there exists $K_\nu \in (0, 1/2]$ such that
    \begin{align*}
        \bbar{\omega}_\nu(-\tau) \tau \le K_\nu \Rightarrow \tau < 1 + \ind\{\nu = 2\} \mbox{ and } \bbar{\omega}_\nu(-\tau) \ge 1/4.
    \end{align*}
    In particular, $K_\nu = 1/2$ if $\nu = 2$ and $K_\nu = 1/4$ if $\nu = 3$.
\end{corollary}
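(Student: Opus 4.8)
The plan is to derive this purely from the monotonicity facts in \Cref{lem:monotonicity}, treating the corollary as a packaging statement with no new analytic content. Write $\varphi(\tau) := \bbar{\omega}_\nu(-\tau)$ and $\psi(\tau) := \bbar{\omega}_\nu(-\tau)\,\tau = \tau\,\varphi(\tau)$, so that the hypothesis reads $\psi(\tau) \le K_\nu$ and the two conclusions read $\tau < \tau_1$ and $\varphi(\tau) \ge 1/4$, where $\tau_1 := 1 + \ind\{\nu = 2\}$. First I would note that both $\varphi$ and $\psi$ are well defined and continuous on all of $[0, \infty)$: although $\omega_\nu$ has domain $(-\infty, 1)$ for $\nu > 2$, the integrals defining $\bar\omega_\nu$ and $\bbar{\omega}_\nu$ evaluate $\omega_\nu$ only at nonpositive arguments when the outer argument is $-\tau \le 0$. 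By \Cref{lem:monotonicity}, $\varphi$ is strictly decreasing with $\varphi(0) = 1/2$ and $\psi$ is strictly increasing with $\psi(0) = 0$; in particular $\psi(\tau_1) > 0$.

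The key observation is that, since $\psi$ is strictly increasing, $\psi(\tau) < \psi(\tau_1)$ forces $\tau < \tau_1$, and since $\varphi$ is strictly decreasing, controlling $\tau$ from above controls $\varphi$ from below. I would split into two regimes according to whether $\varphi$ stays above $1/4$ up to $\tau_1$. If $\varphi(\tau) \ge 1/4$ for every $\tau \in [0, \tau_1)$, set $K_\nu := \min\{\tfrac12 \psi(\tau_1), \tfrac12\}$; then $\psi(\tau) \le K_\nu < \psi(\tau_1)$ yields $\tau < \tau_1$, which in turn gives $\varphi(\tau) \ge 1/4$ by assumption. Otherwise $\varphi$ must dip below $1/4$ before $\tau_1$, and since $\varphi(0) = 1/2$ and $\varphi$ is continuous, the intermediate value theorem provides $\tau_2 \in (0, \tau_1)$ with $\varphi(\tau_2) = 1/4$; set $K_\nu := \min\{\psi(\tau_2), \tfrac12\}$. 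Then $\psi(\tau) \le K_\nu \le \psi(\tau_2)$ gives $\tau \le \tau_2 < \tau_1$ and $\varphi(\tau) \ge \varphi(\tau_2) = 1/4$ by monotonicity. In both regimes $K_\nu \in (0, 1/2]$, establishing existence.

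For the explicit constants I would specialize the closed forms of $\bbar{\omega}_\nu$. For $\nu = 2$ one has $\psi(\tau) = 1 - \tau^{-1}(1 - e^{-\tau})$, so $\psi(2) = \tfrac12 + \tfrac12 e^{-2} > \tfrac12$ and $\varphi(2) = \tfrac14(1 + e^{-2}) > \tfrac14$; hence the first regime applies with $\tau_1 = 2$ and $K_2 = 1/2 < \psi(2)$ is admissible. For $\nu = 3$ one has $\psi(\tau) = 1 - \tau^{-1}\log(1 + \tau)$, so $\psi(1) = \varphi(1) = 1 - \log 2 > 1/4$, and again the first regime applies with $\tau_1 = 1$ and $K_3 = 1/4 < \psi(1)$. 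I expect the only delicate point to be the case split itself: as a sanity check, for $\nu$ just above $2$ the exponent $2/(\nu-2)$ is enormous, $\varphi(1) = \bbar{\omega}_\nu(-1)$ can fall well below $1/4$, and the second regime genuinely binds, so the argument must not rely on $\varphi(\tau_1) \ge 1/4$ holding for all $\nu$. No asymptotic analysis of $\varphi$ at infinity is needed, since the two-regime split already localizes the relevant threshold inside $[0, \tau_1]$.
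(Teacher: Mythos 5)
Your proof is correct and follows essentially the same route as the paper's: existence of $K_\nu$ from the strict monotonicity of $\varphi$ and $\psi$ in \Cref{lem:monotonicity}, plus explicit evaluation of the closed forms of $\bbar{\omega}_\nu$ at $\tau=2$ (for $\nu=2$) and $\tau=1$ (for $\nu=3$). Your two-regime split with the intermediate value theorem is a careful fleshing-out of the existence step that the paper dispatches in one sentence, and your observation that the regime with $\varphi(\tau_1)<1/4$ genuinely occurs for $\nu$ slightly above $2$ is a worthwhile sanity check, but it is not a different argument.
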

\begin{proof}
    The existence of $K_\nu$ follows directly from the strict monotonicity of $\varphi$ and $\psi$ shown in \Cref{lem:monotonicity}.
    For $\nu = 2$,
    \begin{align*}
        \bbar{\omega}_\nu(-\tau) \tau = \frac{e^{-\tau} + \tau - 1}{\tau} \le 1/2 \Rightarrow \tau < 2.
    \end{align*}
    As a result, we have $\bbar{\omega}_\nu(-\tau) \ge 1/4$.
    The case for $\nu = 3$ can be proved similarly.
\end{proof}

The next result shows that the local distance between the minimizer of $f$ and $x$ only depends on the geometry at $x$.
It can be used to localize the empirical risk minimizer as in \Cref{prop:localization}.
\begin{proposition}
\label{prop:self_concordance_local}
    Whenever $R_\nu \norm{\nabla f(x)}_{\nabla^2 f(x)^{-1}} \le K_\nu$, the function $f$ has a unique minimizer $\bar x$ and
    \begin{align*}
        \norm{\bar x - x}_x \le 4 \norm{\nabla f(x)}_{\nabla^2 f(x)^{-1}}.
    \end{align*}
\end{proposition}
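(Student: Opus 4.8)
The plan is to run the classical self-concordance localization argument, but with the function-value bound of \Cref{prop:function_value} playing the role of the usual quadratic lower bound. Write $\eta := \norm{\nabla f(x)}_{\nabla^2 f(x)^{-1}}$ for the Newton decrement and abbreviate $r := R_\nu$, so that the hypothesis reads $r\eta \le K_\nu$. I would also recall the scalar profile $\psi(\tau) := \tau\,\bbar{\omega}_\nu(-\tau)$ from \Cref{lem:monotonicity}, which is continuous and strictly increasing on $[0,\infty)$ with $\psi(0)=0$.

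First I would produce a lower bound for $f$ on spheres centered at $x$. Fix $y$ with $\norm{y-x}_x = \rho$, additionally requiring $r\rho < 1$ when $\nu > 2$. By \Cref{lem:bound_d_nu} we have $d_\nu(x,y) \le r\rho$, and since $\tau \mapsto \bbar{\omega}_\nu(-\tau)$ is decreasing (\Cref{lem:monotonicity}), the lower bound in \Cref{prop:function_value} together with the generalized Cauchy--Schwarz inequality $\ip{\nabla f(x),\, y-x} \ge -\eta\rho$ gives $f(y) - f(x) \ge -\eta\rho + \bbar{\omega}_\nu(-r\rho)\rho^2 = \rho\,[\psi(r\rho)/r - \eta]$. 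Hence $f(y) > f(x)$ on the sphere of radius $\rho$ as soon as $\psi(r\rho) > r\eta$.

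Next I would settle existence and uniqueness. Because $r\eta \le K_\nu$ forces $\psi^{-1}(r\eta) \le \psi^{-1}(K_\nu) < 1$ when $\nu > 2$ (this strict inequality is exactly the first conclusion of \Cref{cor:K_nu}), I can choose a radius $\rho_0$ with $r\rho_0 \in (\psi^{-1}(r\eta),\,1)$, with no upper constraint needed when $\nu = 2$. On the compact ball $\overline{B} := \{y : \norm{y-x}_x \le \rho_0\}$, the continuous function $f$ strictly exceeds $f(x)$ on the boundary, so it attains its minimum at an interior point $\bar x$; this $\bar x$ is a critical point and, since $f$ is convex, a global minimizer. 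Uniqueness follows because \Cref{prop:hessian} (applicable as $d_\nu(x,\bar x) < 1$) gives $\nabla^2 f(\bar x) \succ 0$, so $f$ is strictly convex near $\bar x$, and a convex function cannot have a second minimizer once it has a nondegenerate one.

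Finally, the distance bound. A crossing argument shows that, for every $\rho$ with $\psi(r\rho) > r\eta$, the inequality $f(y) > f(x) \ge f(\bar x)$ on $\norm{y-x}_x = \rho$ forces $\norm{\bar x - x}_x \le \rho^* := \psi^{-1}(r\eta)/r$: if $\bar x$ were outside such a sphere, the segment $[x,\bar x]$ would cross it at a point of value $\le \max\{f(x),f(\bar x)\} = f(x)$, a contradiction. It then remains to check $\rho^* \le 4\eta$, i.e.\ $\psi^{-1}(r\eta) \le 4r\eta$, i.e.\ (applying the increasing $\psi$) $r\eta \le \psi(4r\eta) = 4r\eta\,\bbar{\omega}_\nu(-4r\eta)$, which is precisely $\bbar{\omega}_\nu(-4r\eta) \ge \tfrac14$. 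I expect this last inequality to be the crux. I would unpack \Cref{cor:K_nu}: the implication $\psi(\tau) \le K_\nu \Rightarrow \bbar{\omega}_\nu(-\tau) \ge \tfrac14$ means the sublevel set $\{\psi \le K_\nu\} = [0,\psi^{-1}(K_\nu)]$ lies in $[0,\tau_c]$, where $\tau_c$ is the unique (by monotonicity) root of $\bbar{\omega}_\nu(-\tau_c) = \tfrac14$; hence $K_\nu \le \psi(\tau_c) = \tau_c/4$, so $4r\eta \le 4K_\nu \le \tau_c$, and the monotonicity of $\bbar{\omega}_\nu(-\cdot)$ delivers $\bbar{\omega}_\nu(-4r\eta) \ge \tfrac14$. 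The main obstacle is exactly this bookkeeping that converts the abstract threshold $K_\nu$ into the explicit factor $4$, while keeping the domain restriction $d_\nu < 1$ satisfied throughout for $\nu > 2$.
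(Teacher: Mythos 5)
Your proof is correct, and it rests on exactly the same ingredients as the paper's: the lower bound of \Cref{prop:function_value}, Cauchy--Schwarz, \Cref{lem:bound_d_nu}, the monotonicity facts of \Cref{lem:monotonicity}, and the threshold $K_\nu$ of \Cref{cor:K_nu}. The difference is purely in the topological packaging. The paper argues directly on the sublevel set $\calL_f(f(x))$: for any $y$ with $f(y)\le f(x)$ one gets $\psi(R_\nu\norm{y-x}_x)\le R_\nu\eta\le K_\nu$, so \Cref{cor:K_nu} applies at $\tau=R_\nu\norm{y-x}_x$ and yields $\bbar{\omega}_\nu(-\tau)\ge 1/4$, hence $\norm{y-x}_x\le 4\eta$ for \emph{every} point of the level set; compactness, existence, the distance bound, and (via \Cref{prop:hessian}) uniqueness then all come out in one pass. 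Your sphere-plus-crossing route reaches the same conclusion but pays a small bookkeeping price: your bound first appears as $\rho^*=\psi^{-1}(R_\nu\eta)/R_\nu$, and to convert it to $4\eta$ you need $\bbar{\omega}_\nu(-4R_\nu\eta)\ge 1/4$, which \Cref{cor:K_nu} does not give directly at the point $4R_\nu\eta$ (one only knows $\psi(4R_\nu\eta)\le 2K_\nu$ there); your detour through the root $\tau_c$ of $\bbar{\omega}_\nu(-\tau_c)=1/4$ correctly re-derives the inclusion $\{\psi\le K_\nu\}\subset[0,\tau_c]$ that the corollary already encodes, so the extra step is sound but avoidable. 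The only point worth tightening is the implicit existence of $\tau_c$ (i.e.\ that $\bbar{\omega}_\nu(-\tau)$ actually drops below $1/4$), which holds since $\omega_\nu(-\tau)\to 0$ as $\tau\to\infty$ for all $\nu\ge 2$; with that noted, both arguments are equivalent in strength.
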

\begin{proof}
    Consider the level set
    \begin{align*}
        \calL_f(f(x)) := \{y \in \calX: f(y) \le f(x)\} \neq \emptyset.
    \end{align*}
    Take an arbitrary $y \in \calL_f(f(x))$.
    According to \Cref{prop:function_value}, we have
    \begin{align*}
        0 \ge f(y) - f(x) \ge \ip{\nabla f(x), y - x} + \bbar{\omega}_\nu(-d_\nu(x, y)) \norm{y-x}_x^2.
    \end{align*}
    By the Cauchy-Schwarz inequality and \Cref{lem:bound_d_nu,lem:monotonicity}, we get
    \begin{align*}
        \bbar{\omega}_\nu(-R_\nu \norm{y - x}_x) \norm{y - x}_x^2 \le \norm{\nabla f(x)}_{H^{-1}(x)} \norm{y - x}_x
    \end{align*}
    This implies
    \begin{align*}
        \bbar{\omega}_\nu(-R_\nu \norm{y - x}_x) R_\nu \norm{y - x}_x \le R_\nu \norm{\nabla f(x)}_{H^{-1}(x)} \le K_\nu.
    \end{align*}
    Due to \Cref{cor:K_nu}, it holds that $R_\nu \norm{y - x}_x < 1 + \ind\{\nu = 2\}$ and $\bbar{\omega}_\nu(-R_\nu \norm{y - x}_x) \ge 1/4$.
    It follows that $d_\nu(x, y) < 1 + \ind\{\nu = 2\}$ and
    \begin{align*}
        \norm{y - x}_x \le 4 \norm{\nabla f(x)}_{\nabla^2 f(x)^{-1}}.
    \end{align*}
    Hence, the level set $\calL_f(f(x))$ is compact so that $f$ has a minimizer $\bar x$.
    Moreover, by \Cref{prop:hessian} and $\nabla^2 f(x) \succ 0$, we obtain $\nabla^2 f(y) \succ 0$ for all $y \in \calL_f(f(x))$.
    This yields that $\bar x$ is the unique minimizer of $f$ and it satisfies
    \begin{align*}
        \norm{\bar x - x}_x \le 4 \norm{\nabla f(x)}_{\nabla^2 f(x)^{-1}}.
    \end{align*}
\end{proof}

\begin{remark}
    A similar result also appears in \cite[Prop.~B.4]{ostrovskii2021finite}.
    We extend their result from $\nu \in \{2, 3\}$ to $\nu \ge 2$.
\end{remark}

\subsection{Concentration of random vectors and matrices}
\label{sub:appendix:concentration}

We start with the precise definition of sub-Gaussian random vectors \cite[Chapter 3.4]{vershynin2018high}.

\begin{definition}[Sub-Gaussian vector]\label{def:subg_vec}
    Let $S \in \reals^d$ be a random vector.
    We say $S$ is sub-Gaussian if $\ip{S, s}$ is sub-Gaussian for every $s \in \reals^d$.
    Moreover, we define the sub-Gaussian norm of $S$ as
    \begin{align*}
        \norm{S}_{\psi_2} := \sup_{\norm{s}_2 = 1} \norm{\ip{S, s}}_{\psi_2}.
    \end{align*}
    Note that $\norm{\cdot}_{\psi_2}$ is a norm and satisfies, e.g., the triangle inequality.
\end{definition}

\begin{remark}\label{rmk:centering}
    When $S$ is not mean-zero, we have
    \begin{align*}
        \norm{S - \Expect[S]}_{\psi_2}
        = \sup_{\norm{s}_2 = 1} \norm{\ip{S - \Expect[S], s}}_{\psi_2}
        = \sup_{\norm{s}_2 = 1} \norm{s^\top S - \Expect[s^\top S]}_{\psi_2}.
    \end{align*}
    According to \citet[Lemma 2.6.8]{vershynin2018high}, we obtain
    \begin{align*}
        \norm{S - \Expect[S]}_{\psi_2} \le C \sup_{\norm{s}_2 = 1} \norm{s^\top S}_{\psi_2} = C \norm{S}_{\psi_2},
    \end{align*}
    where $C$ is an absolute constant.
\end{remark}

It follows from \citet[Eq.~(2.17)]{vershynin2018high} that a bounded random vector is sub-Gaussian.
\begin{lemma}\label{lem:bounded_subG}
    Let $S$ be a random vector such that $\norm{S}_2 \le M$ for some constant $M > 0$.
    Then $X$ is sub-Gaussian with $\norm{X}_{\psi_2} \le M/\sqrt{\log{2}}$.
\end{lemma}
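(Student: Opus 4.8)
The plan is to reduce the vector claim to the scalar case via \Cref{def:subg_vec}, which expresses $\norm{S}_{\psi_2}$ as a supremum of one-dimensional sub-Gaussian norms over the unit sphere. First I would fix an arbitrary direction $s \in \reals^d$ with $\norm{s}_2 = 1$ and control the scalar projection $\ip{S, s}$. By the Cauchy--Schwarz inequality, $\abs{\ip{S, s}} \le \norm{S}_2 \norm{s}_2 \le M$ almost surely, so $\ip{S, s}$ is a bounded---hence sub-Gaussian---scalar random variable, with an almost-sure bound that is uniform in $s$.

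Next I would invoke the standard scalar fact that a random variable bounded by $M$ in absolute value has sub-Gaussian norm at most $M/\sqrt{\log 2}$, which is precisely \citet[Eq.~(2.17)]{vershynin2018high}. Concretely, with the Orlicz definition $\norm{Y}_{\psi_2} = \inf\{t > 0 : \Expect[\exp(Y^2/t^2)] \le 2\}$, the almost-sure bound gives $\Expect[\exp(\ip{S, s}^2/t^2)] \le \exp(M^2/t^2)$, and the right-hand side is at most $2$ as soon as $t \ge M/\sqrt{\log 2}$; taking the infimum over admissible $t$ yields $\norm{\ip{S, s}}_{\psi_2} \le M/\sqrt{\log 2}$.

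Finally, since this bound holds for every unit vector $s$ with a constant independent of $s$, taking the supremum in \Cref{def:subg_vec} gives $\norm{S}_{\psi_2} = \sup_{\norm{s}_2 = 1} \norm{\ip{S, s}}_{\psi_2} \le M/\sqrt{\log 2}$, which is the assertion. There is essentially no obstacle here; the only point that warrants attention is that the scalar estimate is uniform in the direction $s$, so that passing to the supremum over the sphere does not inflate the constant.
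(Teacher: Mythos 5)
Your proof is correct and follows exactly the route the paper intends: the paper simply cites \citet[Eq.~(2.17)]{vershynin2018high} for the scalar bounded-implies-sub-Gaussian fact and combines it with \Cref{def:subg_vec}, which is precisely your Cauchy--Schwarz reduction to a uniform one-dimensional bound followed by the supremum over the unit sphere. Nothing to add beyond noting that the lemma's conclusion should read $S$ rather than $X$, a typo in the paper's statement rather than in your argument.
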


As a direct consequence of \citet[Prop.~2.6.1]{vershynin2018high}, the sum of i.i.d.~sub-Gaussian random vectors is also sub-Gaussian.
\begin{lemma}\label{lem:sum_subg}
Let $S_1, \dots, S_n$ be i.i.d.~random vectors, then we have $\norm{\sum_{i=1}^n S_i}_{\psi_2}^2 \lesssim \sum_{i=1}^n \norm{S_i}_{\psi_2}^2$.
\end{lemma}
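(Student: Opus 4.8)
The plan is to reduce the vector-valued claim to the scalar bound for sums of independent sub-Gaussian random variables, and then invoke \citet[Prop.~2.6.1]{vershynin2018high}. The bridge is the very definition of the sub-Gaussian norm of a vector (\Cref{def:subg_vec}): the quantity $\norm{\sum_{i=1}^n S_i}_{\psi_2}$ is a supremum, over unit vectors $s$, of the scalar sub-Gaussian norms $\norm{\ip{\sum_{i=1}^n S_i, s}}_{\psi_2}$. Since $\ip{\sum_i S_i, s} = \sum_i \ip{S_i, s}$, each summand is a one-dimensional projection to which the scalar theory applies directly.

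First I would fix an arbitrary unit vector $s \in \reals^d$. Because the $S_i$ are independent, the scalars $\ip{S_i, s}$ are independent as well, and by the definition of $\norm{S_i}_{\psi_2}$ each satisfies $\norm{\ip{S_i, s}}_{\psi_2} \le \norm{S_i}_{\psi_2}$ (the supremum in the definition is over all unit vectors, so it dominates the value at this particular $s$). Applying the scalar sum bound of \citet[Prop.~2.6.1]{vershynin2018high} to $\sum_i \ip{S_i, s}$ then yields
\begin{align*}
    \norm{\ip{\textstyle\sum_{i=1}^n S_i, s}}_{\psi_2}^2 \lesssim \sum_{i=1}^n \norm{\ip{S_i, s}}_{\psi_2}^2 \le \sum_{i=1}^n \norm{S_i}_{\psi_2}^2.
\end{align*}
The right-hand side no longer depends on $s$, so taking the supremum over $\norm{s}_2 = 1$ on the left gives $\norm{\sum_{i=1}^n S_i}_{\psi_2}^2 \lesssim \sum_{i=1}^n \norm{S_i}_{\psi_2}^2$, which is the claim.

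The only delicate point is that \citet[Prop.~2.6.1]{vershynin2018high} is stated for mean-zero summands, whereas the present lemma is phrased for general i.i.d.\ vectors. This is harmless in our use case: in \Cref{lem:score} the relevant vector is the centered empirical gradient, since $S(\theta_\star) = \Expect[S(\theta_\star; Z)] = 0$ by first-order optimality, so the projections $\ip{S_i, s}$ are already mean-zero. In general one first centers, replacing $S_i$ by $S_i - \Expect[S_i]$ and using \Cref{rmk:centering} (i.e.\ \citet[Lemma 2.6.8]{vershynin2018high}) to bound $\norm{S_i - \Expect[S_i]}_{\psi_2}$ by an absolute constant times $\norm{S_i}_{\psi_2}$; the scalar additivity then applies to the centered projections. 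I expect this centering bookkeeping --- rather than any analytic difficulty --- to be the main thing to get right, since the scalar sub-Gaussian additivity is entirely off-the-shelf.
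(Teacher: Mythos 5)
Your proof is correct and is exactly the argument the paper intends: the paper offers no written proof beyond declaring the lemma ``a direct consequence of \citet[Prop.~2.6.1]{vershynin2018high},'' and your reduction to scalar projections via \Cref{def:subg_vec} followed by the scalar additivity bound is the standard way to make that one-line citation rigorous. Your observation about the mean-zero hypothesis is also apt --- the lemma as literally stated fails for nonzero-mean vectors (take $S_i$ deterministic), and you correctly note that in the paper's only use (\Cref{lem:score}) the summands are centered by first-order optimality.
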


We call a random vector $S \in \reals^d$ isotropic if $\Expect[S] = 0$ and $\Expect[SS^\top] = I_d$.
The following theorem is a tail bound for quadratic forms of isotropic sub-Gaussian random vectors.
\begin{theorem}[\citet{ostrovskii2021finite}, Theorem A.1]\label{thm:isotropic_tail}
Let $S \in \reals^d$ be an isotropic random vector with $\norm{S}_{\psi_2} \le K$, and let $J \in \reals^{d \times d}$ be positive semi-definite.
Then,
\begin{align*}
    \Prob(\norm{S}_{J}^2 - \text{Tr}(J) \ge t) \le \exp\left(-c\min\left\{ \frac{t^2}{K^2 \norm{J}_2^2}, \frac{t}{K\norm{J}_\infty} \right\} \right).
\end{align*}
In other words, with probability at least $1 - \delta$, it holds that
\begin{align}
  \norm{S}_{J}^2 - \text{Tr}(J) \lesssim K^2\left( \norm{J}_2 \sqrt{\log{(e/\delta)}} + \norm{J}_{\infty} \log{(1/\delta)} \right).
\end{align}
\end{theorem}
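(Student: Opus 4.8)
The plan is to control the upper tail of the quadratic form $\norm{S}_J^2 = S^\top J S$ through a Chernoff argument: bound the moment generating function (MGF) of the centered variable $S^\top J S - \Tr(J)$ by a Bernstein-type expression and then optimize over the Chernoff parameter. First I would record the two facts the hypotheses supply. Isotropy gives $\Expect[S]=0$ and $\Expect[SS^\top]=I_d$, so that $\Expect[S^\top J S]=\Tr(J\,\Expect[SS^\top])=\Tr(J)$, which identifies the centering term. The bound $\norm{S}_{\psi_2}\le K$ (in the sense of \Cref{def:subg_vec}) yields the scalar MGF estimate $\Expect\exp(\langle\alpha,S\rangle)\le\exp(cK^2\norm{\alpha}_2^2)$ for every $\alpha\in\reals^d$, a standard consequence of the sub-Gaussian norm together with mean-zero.

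The crux is a Gaussian linearization that removes any dependence among the coordinates of $S$ — we never assumed the coordinates are independent, so a decoupling/chaos argument in $S$ is unavailable. Introducing $g\sim\mathcal N(0,I_d)$ independent of $S$ and using $\Expect_g\exp(\langle a,g\rangle)=\exp(\norm{a}_2^2/2)$ with $a=\sqrt{2s}\,J^{1/2}S$, I would write, for $s\ge0$,
\[
\Expect_S\exp\bigl(s\,S^\top J S\bigr)=\Expect_S\Expect_g\exp\bigl(\sqrt{2s}\,\langle J^{1/2}g,\,S\rangle\bigr).
\]
By Fubini the inner object is the MGF of a \emph{linear} functional of $S$, so the scalar sub-Gaussian bound applies directly and gives $\Expect_S\exp(\sqrt{2s}\langle J^{1/2}g,S\rangle)\le\exp(2csK^2\,g^\top J g)$. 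This relocates all the quadratic structure onto the auxiliary Gaussian, reducing the problem to the MGF of the Gaussian chaos $g^\top J g$, which diagonalizing $J=\sum_i\lambda_i u_iu_i^\top$ makes explicit: $\Expect_g\exp(2csK^2 g^\top J g)=\prod_i(1-4csK^2\lambda_i)^{-1/2}$, valid for $s<(4cK^2\lambda_{\max}(J))^{-1}$.

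It then remains to turn the product into a Bernstein MGF. Applying $-\tfrac12\log(1-u)-\tfrac{u}{2}\le\tfrac{u^2}{4(1-u)}$ termwise with $u=4csK^2\lambda_i$, the centered log-MGF is bounded by $\tfrac{C s^2K^4\sum_i\lambda_i^2}{1-Cs K^2\lambda_{\max}(J)}$; since $\sum_i\lambda_i^2=\Tr(J^2)=\norm{J}_2^2$ (the Frobenius norm) and $\lambda_{\max}(J)=\norm{J}_\infty$ (the operator norm), this is exactly a Bernstein MGF with variance proxy $\asymp K^4\norm{J}_2^2$ and scale $\asymp K^2\norm{J}_\infty$. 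Markov's inequality followed by optimization of $s$ over the admissible range $s<(CK^2\norm{J}_\infty)^{-1}$ then produces the two-regime tail $\exp(-c\min\{t^2/(K^4\norm{J}_2^2),\,t/(K^2\norm{J}_\infty)\})$, and inverting this at confidence level $\delta$ gives the high-probability form, where the sub-Gaussian regime contributes the $\norm{J}_2\sqrt{\log(e/\delta)}$ term and the sub-exponential regime the $\norm{J}_\infty\log(1/\delta)$ term.

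I expect the main obstacle to be twofold. The first difficulty, the absence of independence among the coordinates of $S$, is precisely what the Gaussian-linearization step dissolves, because every use of randomness in $S$ is reduced to a scalar linear form for which $\norm{S}_{\psi_2}\le K$ alone suffices. The second, more delicate point is obtaining the centering at the \emph{exact} mean $\Tr(J)$ rather than at an inflated $K^2\,\Tr(J)$: the naive linearization inserts the sub-Gaussian proxy in place of the true second moment, so I would keep the exact second-order term by exploiting isotropy ($\Expect\langle u_i,S\rangle^2=1$) when expanding the log-MGF, separating the true-mean contribution from the genuinely fluctuating higher-order part. The remaining work is constant bookkeeping through the $\psi_2$-to-MGF conversion and verifying that the restriction $s<(CK^2\norm{J}_\infty)^{-1}$ is exactly what generates the crossover between the two tail regimes.
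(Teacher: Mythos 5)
The paper never proves \Cref{thm:isotropic_tail}: it is imported verbatim by citation from \citet{ostrovskii2021finite}, so there is no internal argument to compare yours against, and any proof you give is necessarily ``a different route.'' Your Gaussian-linearization scheme is a standard and correct way to attack quadratic forms of dependent sub-Gaussian vectors: the identity $\Expect_S e^{sS^\top JS}=\Expect_S\Expect_g e^{\sqrt{2s}\langle J^{1/2}g,S\rangle}$, the Fubini swap, the scalar $\psi_2$ MGF bound, and the Gaussian-chaos computation $\prod_i(1-4csK^2\lambda_i)^{-1/2}$ are all sound, and optimizing the resulting Bernstein MGF does give a two-regime tail. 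What this argument proves is the Hsu--Kakade--Zhang-type bound: concentration of $\norm{S}_J^2$ around $C K^2\Tr(J)$, with denominators $K^4\norm{J}_2^2$ and $K^2\norm{J}_\infty$ (note also that your powers of $K$ are larger than the stated ones, which matters only up to the normalization $K\gtrsim 1$ forced by isotropy).

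The genuine gap is exactly the one you flag and then wave away: recentering at the true mean $\Tr(J)$. After the inner bound $\Expect_S e^{\langle\alpha,S\rangle}\le e^{cK^2\norm{\alpha}_2^2}$ is applied at $\alpha=\sqrt{2s}J^{1/2}g$, the exact second moment of $S$ is irretrievably replaced by the proxy $2cK^2$, leaving a drift $(2cK^2-1)\,s\Tr(J)$ in the centered log-MGF that is of order $sK^2\Tr(J)$ and is not absorbed by the variance term $s^2K^4\norm{J}_2^2$, since $\Tr(J)$ can exceed $\norm{J}_2$ by a factor $\sqrt d$. Your proposed repair---keeping the exact variance when expanding the log-MGF---cannot be executed inside this scheme, because the sharp expansion $\log\Expect e^{\langle\alpha,S\rangle}\le\tfrac12\norm{\alpha}_2^2+O(K^3\norm{\alpha}_2^3)$ is only valid for $\norm{\alpha}_2\lesssim 1/K$, whereas $\norm{\alpha}_2=\sqrt{2s}\,\norm{J^{1/2}g}_2$ is unbounded under the auxiliary Gaussian; truncating $g$ destroys the factorization of the chaos MGF. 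More fundamentally, no repair is possible under the stated hypotheses alone: take $S=\sqrt{2d}\,\xi w$ with $\xi\sim\mathrm{Bernoulli}(1/2)$ independent of $w$ uniform on the unit sphere of $\reals^d$. Then $\Expect[S]=0$, $\Expect[SS^\top]=I_d$, and $\norm{S}_{\psi_2}\le C$ for an absolute constant, yet with $J=I_d$ one has $\Prob(\norm{S}_J^2-\Tr(J)\ge d)=1/2$, while the claimed bound gives $e^{-c'd}$. So exact-mean centering at scale $\norm{J}_2$ requires structure beyond isotropy plus a bounded $\psi_2$ norm (e.g.\ independent coordinates, for which the Hanson--Wright proof proceeds by decoupling rather than linearization). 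The honest outputs of your route are the $CK^2\Tr(J)$-centered bound, which would propagate an extra $K^2$ factor onto the $d_\star/n$ term in \Cref{lem:score} and downstream; you should either prove and use that version, or identify and invoke the additional structure of the vector to which the theorem is actually applied.
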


We then give the definition of the matrix Bernstein condition \cite[Chapter 6.4]{wainwright2019high}.

\begin{definition}[Matrix Bernstein condition]\label{def:matrix_bernstein}
    Let $H \in \reals^{d \times d}$ be a zero-mean symmetric random matrix.
    We say $H$ satisfies a Bernstein condition with parameter $b > 0$ if, for all $j \ge 3$,
    \begin{align*}
        \Expect[H^j] \preceq \frac12 j! b^{j-2} \Var(H).
    \end{align*}
\end{definition}

The next lemma, which follows from \citet[Eq.~(6.30)]{wainwright2019high}, shows that a matrix with bounded spectral norm satisfies the matrix Bernstein condition.
\begin{lemma}\label{lem:bounded_bernstein}
    Let $H$ be a zero-mean random matrix such that $\norm{H}_2 \le M$ for some constant $M > 0$.
    Then $H$ satisfies the matrix Bernstein condition with $b = M$ and $\sigma_H^2 = \norm{\Var(H)}_2$.
    Moreover, $\sigma_H^2 \le 2M^2$.
\end{lemma}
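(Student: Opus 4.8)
The plan is to derive the moment bound of \Cref{def:matrix_bernstein} from a single deterministic (per-realization) Loewner inequality and then integrate. Since $H$ is symmetric and zero-mean, $\Var(H) = \Expect[HH^\top] = \Expect[H^2] \succeq 0$, so it suffices to show that for every integer $j \ge 3$ one has $\Expect[H^j] \preceq \tfrac12 j!\, M^{j-2}\, \Expect[H^2]$.

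First I would prove the pointwise bound $H^j \preceq M^{j-2} H^2$, valid for each realization of $H$ and every integer $j \ge 2$. Fix a realization and diagonalize $H = \sum_i \lambda_i u_i u_i^\top$ with orthonormal $u_i$; the hypothesis $\norm{H}_2 \le M$ gives $\abs{\lambda_i} \le M$. Then $H^j = \sum_i \lambda_i^j u_i u_i^\top$ and $H^2 = \sum_i \lambda_i^2 u_i u_i^\top$, so the claimed inequality reduces to the scalar bound $\lambda_i^j \le M^{j-2}\lambda_i^2$ for each $i$. Writing $\lambda_i^j = \lambda_i^{j-2}\lambda_i^2$ and using $\lambda_i^{j-2} \le \abs{\lambda_i}^{j-2} \le M^{j-2}$ together with $\lambda_i^2 \ge 0$ settles it; note that the only subtle case, odd $j$ with $\lambda_i < 0$, is actually favourable, since then $\lambda_i^{j-2} < 0 \le M^{j-2}$.

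Next I would integrate. Because $A \preceq B$ pointwise implies $\Expect[A] \preceq \Expect[B]$ (the expectation of a positive semidefinite matrix is positive semidefinite, as $v^\top \Expect[B-A] v = \Expect[v^\top(B-A)v] \ge 0$), taking expectations in $H^j \preceq M^{j-2}H^2$ yields $\Expect[H^j] \preceq M^{j-2}\Var(H)$. Finally, since $\tfrac12 j! \ge 1$ for all $j \ge 3$ and $M^{j-2}\Var(H) \succeq 0$, we may inflate the right-hand side to obtain $\Expect[H^j] \preceq \tfrac12 j!\, M^{j-2}\,\Var(H)$, which is exactly \Cref{def:matrix_bernstein} with $b = M$.

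For the variance bound I would use the convexity of the spectral norm (Jensen's inequality): $\sigma_H^2 = \norm{\Var(H)}_2 = \norm{\Expect[H^2]}_2 \le \Expect\norm{H^2}_2 = \Expect\norm{H}_2^2 \le M^2 \le 2M^2$. The only genuinely delicate point in the whole argument is the sign bookkeeping for odd powers in the pointwise step; once that is in place, monotonicity of expectation under the Loewner order does all the remaining work, and this reconstructs the short argument behind \citet[Eq.~(6.30)]{wainwright2019high}.
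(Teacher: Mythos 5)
Your proof is correct and is essentially the argument the paper relies on: the paper gives no proof of its own and simply defers to \citet[Eq.~(6.30)]{wainwright2019high}, whose content is exactly the pointwise spectral bound $H^j \preceq M^{j-2}H^2$ followed by taking expectations, which you reconstruct faithfully (including the sign bookkeeping for odd $j$). Your Jensen step even yields the sharper bound $\sigma_H^2 \le M^2$, which of course implies the stated $\sigma_H^2 \le 2M^2$.
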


The next theorem is the Bernstein bound for random matrices.

\begin{theorem}[\citet{wainwright2019high}, Theorem 6.17]\label{thm:bernstein_matrix}
Let $\{H_i\}_{i=1}^n$ be a sequence of zero-mean independent symmetric random matrices that satisfies the Bernstein condition with parameter $b > 0$.
Then, for all $\delta > 0$, it holds that
\begin{align}
  \Prob\left( \anorm{\frac1n \sum_{i=1}^n H_i}_2 \ge \delta \right) \le 2 \Rank\left(\sum_{i=1}^n \Var(H_i)\right) \exp\left\{ -\frac{n\delta^2}{2(\sigma^2 + b\delta)} \right\},
\end{align}
where $\sigma^2 := \frac1n \anorm{\sum_{i=1}^n \Var(H_i)}_2$.
\end{theorem}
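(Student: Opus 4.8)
The plan is to use the matrix Laplace-transform (matrix Chernoff) method. First I would reduce the two-sided operator-norm bound to a one-sided largest-eigenvalue bound. Since $\anorm{\frac1n\sum_{i=1}^n H_i}_2 = \max\{\lambda_{\max}(\frac1n\sum_{i} H_i),\,\lambda_{\max}(-\frac1n\sum_{i} H_i)\}$ and the sequence $\{-H_i\}$ is again zero-mean, independent, symmetric, and satisfies the Bernstein condition with the same parameter $b$ and the same variance matrices $\Var(-H_i)=\Var(H_i)$, it suffices to bound $\Prob(\lambda_{\max}(\frac1n\sum_{i} H_i)\ge\delta)$; a union bound over the two events then contributes the prefactor $2$.

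For the one-sided bound I would fix $\lambda\in(0,1/b)$ and apply Markov's inequality to the largest eigenvalue through the trace exponential,
\begin{align*}
    \Prob\Big(\lambda_{\max}\big(\textstyle\sum_{i} H_i\big)\ge t\Big)
    \le e^{-\lambda t}\,\Expect\Big[\Tr\exp\big(\lambda\textstyle\sum_{i} H_i\big)\Big].
\end{align*}
The crucial structural step is to decouple the independent summands inside the trace exponential. Invoking Lieb's concavity theorem (equivalently, Golden--Thompson combined with an induction on $n$) yields the subadditivity of the matrix cumulant generating functions,
\begin{align*}
    \Expect\Big[\Tr\exp\big(\lambda\textstyle\sum_{i} H_i\big)\Big]
    \le \Tr\exp\Big(\textstyle\sum_{i}\log\Expect\big[e^{\lambda H_i}\big]\Big).
\end{align*}

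Next I would estimate each matrix moment generating function using the Bernstein condition. Because $\Expect[H_i]=0$, a Taylor expansion together with $\Expect[H_i^j]\preceq\frac12 j!\,b^{j-2}\Var(H_i)$ for $j\ge3$ gives, for $\lambda b<1$,
\begin{align*}
    \Expect\big[e^{\lambda H_i}\big]
    = I + \sum_{j\ge 2}\frac{\lambda^j}{j!}\Expect[H_i^j]
    \preceq I + \frac{\lambda^2\,\Var(H_i)}{2(1-\lambda b)}
    \preceq \exp\!\Big(\frac{\lambda^2\,\Var(H_i)}{2(1-\lambda b)}\Big),
\end{align*}
where the last inequality is the operator bound $I+A\preceq e^A$ for $A\succeq 0$. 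Operator monotonicity of the logarithm gives $\log\Expect[e^{\lambda H_i}]\preceq\frac{\lambda^2\Var(H_i)}{2(1-\lambda b)}$, and summing then applying monotonicity of $\Tr\exp$ under the positive-semidefinite order yields
\begin{align*}
    \Expect\Big[\Tr\exp\big(\lambda\textstyle\sum_{i} H_i\big)\Big]
    \le \Tr\exp\Big(\frac{\lambda^2}{2(1-\lambda b)}\,N\Big),
    \qquad N:=\textstyle\sum_{i}\Var(H_i).
\end{align*}
To obtain the rank prefactor rather than the ambient dimension $d$, I would observe that $\Var(H_i)=\Expect[H_i^2]$, so any $u\in\Ker N$ satisfies $H_i u=0$ almost surely for every $i$; hence $\sum_i H_i$ is supported on $\mathrm{range}(N)$ almost surely, and for $t>0$ the whole computation may be carried out on that subspace of dimension $\Rank(N)$. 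Bounding the restricted trace exponential by $\Rank(N)\,e^{\lambda_{\max}(\cdot)}$ and noting $\lambda_{\max}(N)=\anorm{N}_2=n\sigma^2$ gives
\begin{align*}
    \Prob\Big(\lambda_{\max}\big(\textstyle\sum_{i} H_i\big)\ge t\Big)
    \le \Rank(N)\,\exp\!\Big(-\lambda t + \frac{\lambda^2 n\sigma^2}{2(1-\lambda b)}\Big).
\end{align*}

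Finally I would optimize the free parameter. Setting $t=n\delta$ and taking the Bernstein choice $\lambda=\delta/(\sigma^2+b\delta)\in(0,1/b)$ collapses the exponent to $-n\delta^2/\big(2(\sigma^2+b\delta)\big)$; combining this with the two-sided reduction and $\Rank(\sum_{i}\Var(-H_i))=\Rank(N)$ reproduces exactly the claimed inequality. The main obstacle is the decoupling step: in contrast to the scalar case the matrix MGF does not factor across independent summands, and the trace bound rests on a genuinely nontrivial convexity input (Lieb's theorem). Once that is in place, the MGF estimate via the Bernstein condition and the scalar optimization of $\lambda$ are routine.
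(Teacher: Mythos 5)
This statement is quoted from \citet{wainwright2019high} (Theorem~6.17) as an external technical tool; the paper gives no proof of it, so there is nothing internal to compare against. Your argument is essentially the standard proof of that result: the matrix Laplace-transform method, with Lieb's concavity theorem supplying the subadditivity of the matrix cumulant generating functions, the Bernstein moment condition controlling each matrix MGF via $\log\Expect[e^{\lambda H_i}]\preceq \lambda^2\Var(H_i)/(2(1-\lambda b))$, the kernel argument for $N=\sum_i\Var(H_i)$ producing the rank prefactor, and the choice $\lambda=\delta/(\sigma^2+b\delta)$ collapsing the exponent; all of these steps are sound and the algebra checks out. The one assertion that deserves a flag is the very first one: the Bernstein condition as defined in the paper is one-sided, namely $\Expect[H^j]\preceq\tfrac12 j!\,b^{j-2}\Var(H)$ for $j\ge 3$, and for odd $j$ one has $\Expect[(-H_i)^j]=-\Expect[H_i^j]$, which is \emph{not} bounded above by $\tfrac12 j!\,b^{j-2}\Var(H_i)$ as a consequence of the stated hypothesis. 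So $\{-H_i\}$ does not automatically inherit the Bernstein condition, and the two-sided reduction requires either interpreting the condition symmetrically (as holding for both $\pm H_i$) or noting that in the paper's applications it is verified through almost-sure boundedness (Lemma~\ref{lem:bounded_bernstein}), which is sign-symmetric. This is a wrinkle inherited from the cited statement rather than a defect of your strategy, but it should be stated as a hypothesis rather than asserted as automatic.
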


\end{document}